\documentclass[12pt]{article}
\usepackage{verbatim}
\usepackage{fancyhdr}
\usepackage{graphicx}
\usepackage{epstopdf}
\usepackage{amsmath,amsfonts,amsthm,amssymb}
\usepackage{subfigure}
\usepackage{color}
\usepackage{bm}
\usepackage{comment}
\usepackage{multirow}
\usepackage{enumerate}
\usepackage{url}

\newtheorem{lem}{Lemma}[section]
\newtheorem{thm}{Theorem}[section]

\newtheorem{rem}{Remark}[section]
\newtheorem{exmp}{Example}

\numberwithin{equation}{section}

\newcommand{\jl}{j-\frac 12}
\newcommand{\jr}{j+\frac 12}
\newcommand{\il}{i-\frac 12}
\newcommand{\ir}{i+\frac 12}
\newcommand{\g}{\mathbb{G}_{\theta_1,\theta_2}}
\newcommand{\gr}{\widetilde{\mathbb{G}}_{\theta_1,\frac{1}{2}}}
\newcommand{\gl}{\widetilde{\mathbb{G}}_{\frac{1}{2},\theta_2}}

\newcommand{\jpl}{[\![}
\newcommand{\jpr}{]\!]}
\providecommand{\mean}[1]{ \{\mspace{-6.0mu}\{ #1 \}\mspace{-6.0mu}\} }
\providecommand{\jump}[1]{ [\mspace{-2.5mu}[ #1 ]\mspace{-2.5mu}] }

\textheight 23cm \textwidth 15.8cm
\topmargin -1.5cm \oddsidemargin 0.3cm \evensidemargin -0.3cm

\begin{document}

\renewcommand\baselinestretch{1.24}

\title{An oscillation free local discontinuous Galerkin method for nonlinear degenerate parabolic equations}
\date{}
\author{Qi Tao\footnote {Beijing Computational Science Research Center, Beijing 100193, China.
E-mail: taoqi@csrc.ac.cn. Research is supported in part by NSFC grants U1930402 and the fellowship of China
Postdoctoral Science Foundation No. 2020TQ0030},~~
Yong Liu \footnote {
LSEC, Institute of Computational Mathematics, Hua Loo-Keng Center for Mathematical Sciences,
Academy of Mathematics and Systems Science, Chinese Academy of Sciences, Beijing 100190, China.
E-mail: yongliu@lsec.cc.ac.cn. Research is partially supported by the fellowship of China Postdoctoral Science
Foundation No. 2020TQ0343.},~~
Yan Jiang \footnote {School of Mathematical Sciences, University of Science and Technology of China,
Hefei, Anhui 233026, China. E-mail: jiangy@ustc.edu.cn. Research is partially supported by NSFC grant 11901555},~~
Jianfang Lu\footnote {South China Research Center for Applied Mathematics
and Interdisciplinary Studies, South China Normal University, Canton,
Guangdong 510631, China. E-mail: jflu@m.scnu.edu.cn. Research is partially supported by NSFC grant 11901213
and Guangdong Basic and Applied Basic Research Foundation 2020B1515310021. }
}

\maketitle

\textbf{Abstract.}
In this paper, we develop an oscillation free local discontinuous Galerkin (OFLDG) method for solving nonlinear degenerate parabolic equations. Following the idea of our recent work \cite{LLS2021SINUM}, we add the damping terms to the LDG scheme to control the spurious oscillations when solutions have a large gradient. The $L^2$-stability and optimal priori error estimates for the semi-discrete scheme are established. The numerical experiments demonstrate that the proposed method maintains the high-order accuracy and controls the spurious oscillations well.
\medskip

\textbf{Keywords.} degenerate parabolic equations; oscillation free; discontinuous Galerkin methods;
optimal error estimates.
\medskip

\textbf{AMS classification.} 65M12, 65M60

\section{Introduction}
\label{sec_intro}

In this paper, we are interested in designing an oscillation free local discontinuous
Galerkin (OFLDG) method for solving the nonlinear degenerate parabolic equations in the following form:
\begin{align} \label{eqn_cdp}
 u_t + \nabla \cdot \big( \bm{f}(u) - \bm{a}(u) \, \nabla u \big) = 0,\quad  \bm{a}(u)\in \mathbb{R}^{d\times d},
\end{align}
where $\bm{x} = ( x_1, \ldots, x_d)^T\in\Omega$ and $\Omega  \subseteq \mathbb{R}^{d}$ is open
and bounded. The flux functions $\bm{f}(u) = \big( f_1(u), \ldots, f_d(u) \big)^T$ and
$\bm{a}(u)=[a_{ij}(u)]_{d\times d}$ is a positive semidefinite matrix.
In particular, when $\bm{a}(u) = 0$, the scalar hyperbolic conservation laws are served as
the special cases of \eqref{eqn_cdp}. It also includes the heat equation and the porous
medium type equations which are often termed as
{\it degenerate parabolic equations} (DPEs),
that is, $\bm{a}(u)$ vanishes for some certain values of $u$.
Consequently, the partial differential equations of type \eqref{eqn_cdp} model
a wide range of phenomena, such as porous media flow \cite{Aronson1986},
glacier movement and growth \cite{KL1999IMAJNA} and sedimentation processes
\cite{BCBT1999}, etc.
For the non-degenerate ($\bm{a}(u)\neq0$ for all $u$) problem \eqref{eqn_cdp}, it is widely known
that it admits a unique classic solution. However, when $\bm{a}(u) = 0$ for some $u$, the solution
may not be smooth anymore due to the hyperbolic nature of \eqref{eqn_cdp}. Some theoretical results on
the existence and uniqueness of the solution to \eqref{eqn_cdp} can be found in
e.g. \cite{AL1983MathZ, DiBenedetto1993, KR2003DCDS} and the references therein.
In the degenerate case, the solution is often non-smooth and we have to seek
a weak solution. The low regularity of the solution also
brings difficulties to the numerical simulation, especially for the high order methods.
In fact, the spurious oscillations may occur near the interfaces and wave fronts
that are harmful to the robustness of the numerical algorithm.
To overcome this difficulty, various schemes and approaches have been developed in
the literature, such as interface tracking algorithms \cite{DH1984TAMS},
diffusive kinetic schemes \cite{DNT2004MC}, relaxation
schemes \cite{CNPS2007SINUM}, finite difference/volume weighted essentially non-oscillatory (WENO) methods \cite{AHZ2019JCP, Jiang2021JSC,Liu2011SISC},
entropy stable schemes with artificial viscosity \cite{Jerez2017sinum},
method of lines transpose (${\rm MOL}^{T}$) approach with nonlinear filters \cite{CGJY2020JSC},
 discontinuous Galerkin (DG) methods with maximum-principle-satisfying limiters \cite{SCS2018JCP,ZZS2013JCP},
local DG finite element methods \cite{ZW2009JSC}, direct DG methods \cite{Liu2009sinum}, etc.

In this paper, we focus on the DG method and extend our previous
work \cite{LLS2021SINUM} to the nonlinear convection-diffusion problem \eqref{eqn_cdp}.
Compared with the continuous finite element method, the DG method has its own
advantages such as the allowance of the hanging nodes, easy $h$-$p$ adaptivity,
and high parallel efficiency because of the extremely local data structure.
The first DG method was introduced by
Reed and Hill to solve a steady linear transport problem \cite{RH1973} in 1973.
Later on, Cockburn et al.
combined the DG discretization in space with the Runge-Kutta time discretization method to
solve the hyperbolic conservation laws successfully in a series of papers
\cite{ CHS1990MC, CLS1989JCP, CS1989MC, CS1991M2AN, CS1998JCP}.
Enlightened by \cite{BR1997JCP,BRSMP1995},
Cockburn and Shu developed the local
discontinuous Galerkin (LDG) method to solve the convection-diffusion equations in \cite{CS1998LDG}.
Since the solution of \eqref{eqn_cdp} may not have enough regularity, the DG method
becomes a natural choice for its ability to deal with non-smooth solutions.
Conventionally, there are two approaches to deal with the
spurious oscillations in the DG method. One is to apply the slope limiters to the numerical
solutions at each time level to make them meet specific needs.
There exist many effective and efficient limiters such as the $minmod$ type total variation
diminishing (TVD) limiter, total variation bounded (TVB) limiter, weighted essentially
non-oscillatory (WENO) limiter and moment-based limiter
\cite{BDF1994ANM, CS1989MC, QS2005SISC, ZS2013JCP}, etc.
Another is to add an artificial diffusion term in the weak formulation, while the artificial
diffusion coefficient should be chosen adequately, see e.g. \cite{Har2006IJNMF,
HM2014NM}. Recently, we proposed a different approach in controlling the spurious
oscillations for computing the hyperbolic conservation laws in
\cite{LLS2021SINUM,LLS2021SISC} and shallow water equations in \cite{LLTX2021JCP}.
The key ingredient is to add a numerical damping term in the existing
DG scheme, and the added term is a high order term if the solution stays smooth and
takes effect whenever the solution is non-smooth.
Fortunately, this approach inherits many good properties such as conservation,
$L^2$-boundedness, optimal error estimates and superconvergence results
from the conventional DG scheme, which makes it quite attractive.
Besides, this approach is so local that it is efficient and friendly for parallel computation.

We proceed to extend this approach to the
convection-diffusion problems \eqref{eqn_cdp} with possibly degenerate diffusion terms.
We adopt the LDG scheme in \cite{Yao2019NA}, in which it considered the
generalized alternating numerical fluxes, which are more general and complex in the
numerical analysis.
Similar to \cite{Yao2019NA}, we can also obtain the $L^2$-boundedness and optimal
error estimates, while the added damping term can be estimated separately.
It should be noted that the optimal error estimates are based on the so-called
{\it generalized Gauss-Radau} (GGR) projection technique developed in \cite{MSW2016MC}.
In the numerical simulation, we test several commonly used equations such as
the porous medium equations, Buckley-Leverett
equations, as well as other degenerate parabolic problems. The numerical results show
that our scheme not only possesses the high order accuracy for the smooth
solutions but also can compress the spurious oscillations effectively. This also verifies
the theoretical results and demonstrates the good performance of the proposed algorithm.

The paper is organized as follows. In Section \ref{sec_1d}, we consider the one-dimensional
degenerate parabolic equation (DPE) and propose an oscillation free
local discontinuous Galerkin (OFLDG) scheme. The theoretical analysis
on $L^2$-boundedness and optimal
error estimates are derived in the semi-discrete framework.
In Section \ref{sec_2d}, we extend the 1D case to multidimensional
problems and obtain similar theoretical results. In Section \ref{sec_numeric}, we
conduct some numerical experiments by computing different kinds of DPEs, including porous
medium equations, Buckley-Leverett equations in both one and two dimensions.
Concluding remarks are given in Section \ref{sec_sum}.

Throughout this paper, we adopt the standard notations in Sobolev
space. $W^{m,p}(D)$ on the subdomain $D\subset \Omega$ is equipped
with the norm $\|\cdot\|_{W^{m,p}(D)}$.
%If $D=\Omega$, we omit the index $D$;
If $p=2$, $W^{m,2}(D)=H^{m}(D)$,
$\|\cdot\|_{W^{m,2}(D)}=\|\cdot\|_{H^m(D)}$. We use $\|\cdot\|_{D}$ to
denote the $L^{2}$ norm in $D$, if $D=\Omega$ then we omit the subscript $D$.
For all positive integer $N$, we define $Z_N=\{1,\cdots, N\}$.
%Notation  $A\lesssim B$ implies that $A$ can be bounded by $B$ multiplied by a constant independent of the mesh size $h$.

\section{The OFLDG scheme in one dimension}
\label{sec_1d}

In this section, we present the OFLDG scheme for the following one-dimensional nonlinear
degenerate parabolic equations,
\begin{align}\label{1d_model}
u_t + \big(f(u) - a(u)u_x \big)_x = 0, \quad x\in \Omega=[a,b],~~t\in (0,T],
\end{align}
with the initial condition $u(x,0)=u_0(x)$, $x\in \Omega$ and periodic or compactly supported
boundary conditions. Here,  $f(u)$ is the flux function and $a(u)\geq 0$ is the viscous coefficient.

\subsection{Basic notations}
\label{sec_notation_1d}

Firstly, we give some notations that will be used later. Let $I_h$ be a partition of the domain
$\Omega$, defined as follows:
\begin{align*}
a=x_{\frac{1}{2}}<x_{\frac{3}{2}}<\cdots <x_{N+\frac{1}{2}}=b.
\end{align*}
For $j\in Z_N$, we denote $I_j = \big( x_{\jl},x_{\jr} \big)$, $x_j=\frac{1}{2} \big( x_{\jl}+x_{\jr} \big)$, $h_j=x_{\jr}-x_{\jl}$. Furthermore, we assume that the mesh is quasi-uniform,
i.e. there exists a constant $\gamma>0$ such that
\begin{align}
0<\frac{h}{\rho}<\gamma, \quad \text{ where } ~h=\max_{j}=h_j,\quad \rho=\min_{j}{h_j}.
\end{align}
We define the finite element space as follows,
\begin{align} \label{fes_space}
V_h^{k}=\{v_h\in L^2(\Omega): v_h\big|_{I_j}\in \mathcal{P}^k(I_j),~~j\in Z_N\},
\end{align}
where $\mathcal{P}^k(I_j)$ is the polynomial of degree at most $k$ in $I_j$. We denote the right
and left limits of $v_h$ at $x_{\jr}$ as $(v_h)_{\jr}^{+}$ and $(v_h)_{\jr}^{-}$, respectively.
The jump and average of $v_h$ at $x_{\jr}$ are denoted as:
\begin{align*}
\jump{v_h}_{\jr}=(v_h)_{\jr}^{+} - (v_h)_{\jr}^{-},\quad
\mean{v_h}_{\jr}=\frac{1}{2}\Big((v_h)_{\jr}^{+} + (v_h)_{\jr}^{-}\Big).
\end{align*}
We also define $\displaystyle \mean{v_h}^{\theta}_{\jr}=\theta (v_h)_{\jr}^{-}+(1-\theta) (v_h)_{\jr}^{+}$,
for an arbitrary parameter $\theta$.  We use $\|\cdot\|_{\Gamma_h}$ to denote the semi-norm
on the boundary, defined as follows:
\begin{align*}
\|v_h\|_{\Gamma_h}^2 = \sum_{j}\Big( \big( (v_h)_{\jr}^{+} \big)^2 + \big( (v_h)_{\jr}^{-} \big)^2\Big) \, .
\end{align*}

\subsection{The OFLDG scheme}
\label{sec_ofldg_1d}

To derive the OFLDG method for \eqref{1d_model}, we introduce a new auxiliary variable $q=b(u)u_x$,
with $b(u)=\sqrt{a(u)}$. Then, the resulting system is of the form
\begin{align}
u_t +\big(f(u)-b(u)q\big)_x&= 0, \quad x \in \Omega, \quad t\in (0,T],\label{eq:eqn1.1}\\
q-g(u)_x&=0, \quad  x\in \Omega, \label{eq:eqn1.2}
\end{align}
where $g(u)=\displaystyle\int^{u}b(u)du$ is the diffusion flux for the auxiliary variable $q$.
We define the unknown $\bm{w}=(u,q)^T$ and the flux function
\[\bm{h}(\bm{w})=\big(h_u(\bm{w}),h_q(\bm{w})\big)^T=\big(f(u)-b(u)q,\,-g(u)\big)^T.\]
The semi-discrete OFLDG scheme is defined as follows:
seek $\bm{w}_h=(u_h,q_h)^T\in [V_h^k]^2$ such that
for any test functions $v_h$, $r_h \in V_h^k$ and $j\in Z_N$, we have
\begin{align}
((u_h)_t, v_h)_j&= H_{j}\big(h_u(\bm{w}_h),v_h\big)+D_{j}(u_h,v_h),\label{scheme1.1}\\
(q_h, r_h)_j&=G_{j}\big(h_q(\bm{w}_h), r_h\big),\label{scheme1.2}
\end{align}
%The initial condition is taken as $u_h(x,0)=\mathbb{P}_h u_0(x)$, which is the standard $L^2$-projection of initial condition $u_0(x)$.
%{\color{red}Q: should $k\geq1$?}
where, $H_j(\cdot,\cdot)$, $G_j(\cdot,\cdot)$ and $D_j(\cdot,\cdot)$ are defined as follows:
\begin{align*}
%&{\color{blue} (q_h, r_h)_j = \int_{I_j} q_h \, r_h dx,} \\
&H_{j}\big(h_u(\bm{w}_h),v_h\big)=\big(h_u(\bm{w}_h), (v_h)_x\big)_j
-\widehat{h_{u}}(\bm{w}_h)_{\jr}(v_h)_{\jr}^{-}+\widehat{h_{u}}(\bm{w}_h)_{\jl}(v_h)_{\jl}^{+},\\
&G_{j}\big(h_q(\bm{w}_h), r_h\big)=\big(h_q(\bm{w}_h), (r_h)_x\big)_j
- \widehat{h_{q}}(\bm{w}_h)_{\jr}(r_h)_{\jr}^{-}+\widehat{h_{q}}(\bm{w}_h)_{\jl}(r_h)_{\jl}^{+},\\
&D_{j}(u_h,v_h)=- \sum_{\ell=0}^{k}\frac{\sigma_j^\ell(u_h)}{h_j}\int_{I_j}(u_h-P_h^{\ell-1}u_h)v_h \, dx.
\end{align*}
Here, we use the notation $\displaystyle (r, v)_j=\int_{I_j} rv \, dx$, for all $r, v\in L^{2}(I_j)$.
The ``hat'' terms are numerical fluxes, defined as
\begin{align}
&\widehat{h_{u}}(\bm{w}_h)_{\jr}=\hat{f} \big( (u_h)_{\jr}^{-},(u_h)_{\jr}^{+} \big)
- \frac{\jpl g(u_h)\jpr_{\jr}}{\jpl u_h \jpr_{\jr}}\mean{q_h}_{\jr}-\gamma\jpl q_h \jpr_{\jr},\label{1dflux1}\\
&\widehat{h_{q}}(\bm{w}_h)_{\jr}=-\mean{g(u_h)}_{\jr}+\gamma\jpl u_h \jpr_{\jr},\quad
\gamma=\Big(\theta-\frac{1}{2} \Big) \frac{\jump{g(u_h)}_{\jr}}{\jpl u_h \jpr_{\jr}}, \quad \theta \in \mathbb{R}, \label{1dflux2}
\end{align}
where, $\hat{f}\big( (u_h)_{\jr}^{-},(u_h)_{\jr}^{+} \big)$ is a monotone flux for $f(u)$, such as the Lax-Friedrichs flux \cite{CS1989MC}.
$D_j(u_h, v_h)$ in \eqref{scheme1.1} is the damping term to control spurious oscillations.
In particular, $P_h^{\ell-1}$ in the damping term is the standard local $L^2$ projection into $V_h^{\ell-1}$,
$\ell=1,\cdots,k$, and we define $P_h^{-1}=P_h^{\,0}$. Parameter $\sigma_j^\ell(u_h)$ is the damping coefficient
taken as following:
\begin{align}
\sigma_j^\ell(u_h)=\frac{2(2\ell+1)h^\ell}{(2k-1)\ell!}\Big(\jump{\partial_x^\ell u_h}_{\jr}^2
 +\jump{\partial_x^\ell u_h}_{\jl}^2\Big)^{\frac{1}{2}},\quad 0 \leq \ell \leq k, \, k \geq 1 .
\end{align}
Next, we will present the $L^2$ stability and optimal error estimates results for the OFLDG scheme
\eqref{scheme1.1}-\eqref{scheme1.2}.
%For the simplicity, we only consider the periodic boundary conditions or compactly supported boundary conditions in the following analysis.

%\subsection{$L^2$ stability}
\begin{thm}
For periodic or compactly supported boundary conditions, the solution  $\bm{w}_h=(u_h,q_h)^T$
of the semi-discrete OFLDG scheme (\ref{scheme1.1})-(\ref{scheme1.2}) satisfies the following $L^2$ stability, i.e
\begin{align}
\frac{1}{2}\frac{d}{dt}\|u_h\|^2+\|q_h\|^2\leq 0.\label{1dstability}
\end{align}
\end{thm}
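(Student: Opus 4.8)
The plan is to test the scheme against its own discrete solution. Setting $v_h=u_h$ in \eqref{scheme1.1} and $r_h=q_h$ in \eqref{scheme1.2} and summing over $j\in Z_N$, the two left-hand sides become $\tfrac12\tfrac{d}{dt}\|u_h\|^2$ and $\|q_h\|^2$, so \eqref{1dstability} is equivalent to
\[
\sum_{j\in Z_N}\Big[H_j\big(h_u(\bm{w}_h),u_h\big)+G_j\big(h_q(\bm{w}_h),q_h\big)\Big]+\sum_{j\in Z_N}D_j(u_h,u_h)\le 0 .
\]
I would then treat the damping contribution and the underlying LDG contribution separately.

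For the damping term the point is the $L^2$-orthogonality of the local projections: since $P_h^{\ell-1}u_h\big|_{I_j}\in\mathcal{P}^{\ell-1}(I_j)$, we have $\int_{I_j}(u_h-P_h^{\ell-1}u_h)P_h^{\ell-1}u_h\,dx=0$, whence $\int_{I_j}(u_h-P_h^{\ell-1}u_h)u_h\,dx=\|u_h-P_h^{\ell-1}u_h\|_{I_j}^2\ge 0$. Because $\sigma_j^\ell(u_h)\ge 0$ and $h_j>0$, every summand of $D_j(u_h,u_h)$ is nonpositive, so $\sum_{j}D_j(u_h,u_h)\le 0$. It therefore remains to establish the $L^2$-stability of the pure LDG part, $\sum_j[H_j+G_j]\le 0$, which is in the spirit of \cite{Yao2019NA}.

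For the LDG part I would first reduce the volume integrals to interface data. The convection volume term is exact, $(f(u_h),(u_h)_x)_j=F(u_h)_{\jr}^- - F(u_h)_{\jl}^+$ with $F'=f$; integrating by parts in the volume term of $G_j$ and using $g'=b$ gives $(-g(u_h),(q_h)_x)_j=(b(u_h)(u_h)_x,q_h)_j-g(u_h)_{\jr}^-(q_h)_{\jr}^- + g(u_h)_{\jl}^+(q_h)_{\jl}^+$. The key observation is that $(b(u_h)(u_h)_x,q_h)_j=(b(u_h)q_h,(u_h)_x)_j$, so the mixed diffusion volume term cancels exactly between $H_j$ and $G_j$, leaving only interface contributions. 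Summing over $j$ (for periodic or compactly supported data the interface sums close cleanly) and using $g^+q^+-g^-q^-=\mean{g(u_h)}\jump{q_h}+\jump{g(u_h)}\mean{q_h}$ at each node, one is left with a sum over interfaces of
\[
\Phi_{\jr}=-\jump{F(u_h)}_{\jr}+\widehat{h_u}(\bm{w}_h)_{\jr}\jump{u_h}_{\jr}+\big(\widehat{h_q}(\bm{w}_h)_{\jr}+\mean{g(u_h)}_{\jr}\big)\jump{q_h}_{\jr}+\jump{g(u_h)}_{\jr}\mean{q_h}_{\jr}.
\]
Plugging in \eqref{1dflux1}--\eqref{1dflux2}: $\widehat{h_q}+\mean{g(u_h)}=\gamma\jump{u_h}$, so the third term becomes $\gamma\jump{u_h}\jump{q_h}$ and cancels the $-\gamma\jump{q_h}\jump{u_h}$ produced by $\widehat{h_u}\jump{u_h}$, for every $\theta\in\mathbb{R}$; moreover $-\tfrac{\jump{g(u_h)}}{\jump{u_h}}\mean{q_h}\cdot\jump{u_h}=-\jump{g(u_h)}\mean{q_h}$ cancels the last term (reading the ratio as $b(u_h)$ when $\jump{u_h}=0$, harmlessly). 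What survives is $\Phi_{\jr}=-\jump{F(u_h)}_{\jr}+\hat{f}\big((u_h)_{\jr}^-,(u_h)_{\jr}^+\big)\jump{u_h}_{\jr}$, the classical DG interface term for $f(u)_x$.

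Finally I would invoke the standard monotone-flux cell-entropy inequality for the quadratic entropy (as in \cite{CS1989MC,Yao2019NA}): $\sum_{\jr}\big[-\jump{F(u_h)}_{\jr}+\hat{f}\,\jump{u_h}_{\jr}\big]\le 0$. Combined with $\sum_j D_j(u_h,u_h)\le 0$ this yields \eqref{1dstability}. The only step needing care is the algebra above: recognizing that the substitution $q=b(u)u_x$ with $b=\sqrt{a}$, the generalized alternating fluxes, and the particular choice of $\gamma$ conspire to cancel both the mixed volume term and all flux-dependent interface terms, so that the diffusion contributes precisely $-\|q_h\|^2$ and the convection contributes a manifestly nonpositive remainder; everything else is routine.
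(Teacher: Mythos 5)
Your proposal is correct and follows essentially the same route as the paper: test with $u_h$ and $q_h$, show $\sum_j D_j(u_h,u_h)\le 0$ by the $L^2$-projection orthogonality, and reduce the LDG part to a per-interface quantity that is nonpositive by the monotone-flux property — indeed your $\Phi_{\jr}$ equals $-\Theta_{\jr}=-\int_{(u_h)^-_{\jr}}^{(u_h)^+_{\jr}}\big(f(y)-\hat f\big)\,dy$ in the paper's notation. The only difference is that you spell out the interface algebra (cancellation of the mixed volume term and of the $\gamma$- and $\jump{g(u_h)}$-terms) which the paper states without detail; that algebra is verified correctly.
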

\begin{proof}
We take $v_h=u_h$, $r_h=q_h$ in (\ref{scheme1.1}) and (\ref{scheme1.2}) respectively. After summing it over $j$, we have
\begin{align*}
\frac{1}{2}\frac{d}{dt}\|u_h\|^2+\|q_h\|^2&=\sum_j\Big(H_{j}\big(h_u(\bm{w}_h),u_h\big)
+ G_{j}\big(h_q(\bm{w}_h), q_h\big)\Big)+\sum_jD_{j}(u_h,u_h)\\
&=-\sum_j\Theta_{\jr}+\sum_jD_{j}(u_h,u_h).
\end{align*}
Then, (\ref{1dstability}) follows from
\begin{align*}
\Theta_{\jr} & =\int_{(u_h)_{\jr}^{-}}^{(u_h)_{\jr}^{+}}\Big(f(y) - \hat{f} \big( (u_h)_{\jr}^{-},(u_h)_{\jr}^{+}\big) \Big) \, dy\geq 0, \\
 D_{j}(u_h,u_h) & =- \sum_{\ell=0}^{k}\frac{\sigma_j^\ell(u_h)}{h_j}\int_{I_j}(u_h-P_h^{\ell-1}u_h)u_h \, dx\\
& =- \sum_{\ell=0}^{k}\frac{\sigma_j^\ell(u_h)}{h_j}\int_{I_j}(u_h-P_h^{\ell-1}u_h)^2 \, dx\leq 0.
\end{align*}
\end{proof}

\subsection{Error estimates}
\label{sec_errest_1d}
In this subsection, we give the optimal error estimates of the OFLDG scheme for smooth solutions
of  (\ref{1d_model}) with periodic boundary conditions and smooth initial conditions.
We follow the similar approach in \cite{Yao2019NA}. Since the additional damping term is used in the
OFLDG scheme to control the spurious oscillations, we need to prove that the damping term would
not destroy the accuracy. Due to the nonlinear nature of the flux function $\bm{h}(\bm{w})$, we treat it
by Taylor expansion as in \cite{Yao2019NA,Zhang2004SINUM}.
Therefore, we need {\em a priori assumption} that for  sufficiently small $h$, there holds
\begin{align}\label{prioriassump}
\max_{t\in[0,T]}\|u-u_h\|_{L^\infty(\Omega)}\leq C h.
\end{align}
This assumption is frequently used in the analysis of nonlinear problems. For the linear flux functions,
i.e. $f(u)=cu$, the assumption is not necessary. In fact, this assumption can be justified for $k\geq 1$,
see \cite{Meng2012SINUM, Zhang2004SINUM}.  To utilize the Taylor expansion, we need to ensure that
the $f(u)$ and $b(u)$ and their derivatives are bounded. Hence, we assume $f(u)$ and $b(u)\in C^2$.

\subsubsection{One-dimensional projection}
\label{sec_projection_1d}

First of all, we present the projection that will be used in the error estimates.
For a given vector function $\bm{v}=(v_1,\,v_2)^T\in [H^1(\Omega)]^2$, we define the projection $\Pi\bm{v}$:
\begin{align*}
\Pi\bm{v}=(\mathbb{G}_\theta\,v_1,\,\widetilde{\mathbb{G}}_{\theta} \,v_2)^T\in [V_h^{k}]^2\, ,
\end{align*}
where  $\mathbb{G}_\theta\,v_1$ is the generalized Gauss-Radau (GGR) projection of $v_1$ satisfying
\begin{align}
\int_{I_j} (\mathbb{G}_\theta v_1)v_h\,dx&=\int_{I_j} v_1v_h\,dx, \quad \forall \, v_h\in \mathcal{P}^{k-1}(I_j),~j\in Z_N,\label{1dprojection1}\\
\mean{\mathbb{G}_\theta v_1}^{\theta}_{\jr}&=\mean{v_1}^{\theta}_{\jr},\quad\quad \forall \, j\in Z_N.\label{1dprojection2}
\end{align}
$\widetilde{\mathbb{G}}_{\theta} \,v_2$ is defined as follows:
\begin{align}
\int_{I_j} (\widetilde{\mathbb{G}}_{\theta}v_2)v_h\,dx&=\int_{I_j} v_2 v_h\,dx,
\quad\quad \forall \, v_h\in  \mathcal{P}^{k-1}(I_j),~j\in Z_N,\label{1dprojection3}\\
\mean{\widetilde{\mathbb{G}}_{\theta} v_2}^{\tilde{\theta}}_{\jr}&=\mean{v_2}^{\tilde{\theta}}_{\jr}
+ \Big(\theta-\frac{1}{2} \Big) \big(b(v_1)_x\jump{v_1- \mathbb{G}_\theta v_1} \big)_{\jr},
\quad \forall \, j\in Z_N.\label{1dprojection4}
\end{align}
Throughout this paper, we denote $\tilde{\theta}=1-\theta$ for convenience. For the projection $\Pi$,
there exists the following approximation results which were shown in \cite[Lemma 3.1]{Yao2019NA}:
\begin{lem}
If $\bm{v}=(v_1,v_2)^T\in [H^{s+1}(\Omega)]^2$, $s\geq 0$, $\theta> 1/2$. The projection
$\Pi: [H^{1}(\Omega)]^2\rightarrow[V_h^{k}]^2$ is well defined by (\ref{1dprojection1})-(\ref{1dprojection4}). Moreover, there holds the approximation property
%\begin{align}\label{pro1d_est}
%\|\eta_{\bm{v}}^i\|_{L^2(\Omega)}+h\|\eta_{\bm{v}}^i\|_{H^1(\Omega)}+h^{\frac{1}{2}}\|\eta_{\bm{v}}^i\|_{\Gamma_h}\leq Ch^{\min(k,s)+1}\Big(\|v_1\|_{H^{s+1}(\Omega)}+\|v_2\|_{H^{s+1}(\Omega)}\Big),
%\end{align}
\begin{align}\label{pro1d_est}
\|\eta_{\bm{v}}^i\|+h^{\frac{1}{2}}\|\eta_{\bm{v}}^i\|_{\Gamma_h}
\leq Ch^{\min(k,s)+1}\Big(\|v_1\|_{H^{s+1}(\Omega)}+\|v_2\|_{H^{s+1}(\Omega)}\Big),
\end{align}
where $i=1, 2$, $\eta_{\bm{v}}^1=v_1-\mathbb{G}_\theta\,v_1, \eta_{\bm{v}}^2
= v_2-\widetilde{\mathbb{G}}_{\theta}v_2$, and $C$ is a positive constant independent of $h$.
\end{lem}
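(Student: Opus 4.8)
The plan is to handle the two components of $\Pi\bm{v}$ separately: the first is a genuine generalized Gauss--Radau (GGR) projection, for which well-posedness and the optimal approximation estimate are already available from \cite{MSW2016MC}, while the second is a GGR-type projection with a perturbed interface condition, which I would control by comparing it to a standard GGR projection and estimating a small corrector. For the first component, $\mathbb{G}_\theta v_1$ defined by \eqref{1dprojection1}--\eqref{1dprojection2} is exactly the GGR projection matching the $L^2$ projection onto $\mathcal{P}^{k-1}(I_j)$ together with the $\theta$-weighted trace average $\mean{\cdot}^\theta$ at the nodes. Since $\theta>\tfrac12$, the degrees of freedom orthogonal to $\mathcal{P}^{k-1}$ (one per cell, a multiple of the degree-$k$ Legendre polynomial) are determined by a periodic, essentially bidiagonal system that is diagonally dominant, so $\mathbb{G}_\theta v_1$ exists and is unique, and \cite{MSW2016MC} gives $\|\eta^1_{\bm{v}}\|+h^{1/2}\|\eta^1_{\bm{v}}\|_{\Gamma_h}\le Ch^{\min(k,s)+1}\|v_1\|_{H^{s+1}(\Omega)}$. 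This is the $i=1$ case; dividing by $h^{1/2}$ also records the trace bound $\|\eta^1_{\bm{v}}\|_{\Gamma_h}\le Ch^{\min(k,s)+1/2}\|v_1\|_{H^{s+1}(\Omega)}$, which is the key input below.

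For the second component I would write $\widetilde{\mathbb{G}}_\theta v_2=\mathbb{G}_{\tilde\theta}v_2+\phi_h$, where $\mathbb{G}_{\tilde\theta}$ is the standard GGR projection with parameter $\tilde\theta=1-\theta$ (well defined since $\tilde\theta\neq\tfrac12$, with the optimal estimate of \cite{MSW2016MC}), so that $\phi_h\in V_h^k$ carries precisely the extra term in \eqref{1dprojection4}. Subtracting the defining relations, $\phi_h$ satisfies $\int_{I_j}\phi_h v_h\,dx=0$ for all $v_h\in\mathcal{P}^{k-1}(I_j)$, whence $\phi_h|_{I_j}=c_jL_k$ with $L_k$ the degree-$k$ Legendre polynomial on $I_j$, and $\mean{\phi_h}^{\tilde\theta}_{\jr}=R_{\jr}$ with $R_{\jr}:=\big(\theta-\tfrac12\big)\big(b(v_1)_x\jump{\eta^1_{\bm{v}}}\big)_{\jr}$. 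The interface conditions then reduce to a periodic bidiagonal system $\tilde\theta\,c_j+(-1)^k(1-\tilde\theta)\,c_{j+1}=R_{\jr}$, which is invertible precisely because $\tilde\theta\neq\tfrac12$ (this also yields well-posedness of $\widetilde{\mathbb{G}}_\theta v_2$, hence of $\Pi$), and stability of the recursion gives $\big(\sum_j c_j^2\big)^{1/2}\le C\big(\sum_j R_{\jr}^2\big)^{1/2}$ with $C$ depending on $\tilde\theta$. A routine scaling estimate ($\|\phi_h\|_{I_j}^2\simeq h_j c_j^2$ and $\|\phi_h\|_{\Gamma_h}^2\simeq\sum_j c_j^2$) then produces $\|\phi_h\|+h^{1/2}\|\phi_h\|_{\Gamma_h}\le Ch^{1/2}\big(\sum_j R_{\jr}^2\big)^{1/2}$.

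It remains to bound $\big(\sum_j R_{\jr}^2\big)^{1/2}$. Since $v_1$ is smooth and $b\in C^2$ in the error-estimate setting, $\|b(v_1)_x\|_{L^\infty(\Omega)}<\infty$, so $|R_{\jr}|\le C|\jump{\eta^1_{\bm{v}}}_{\jr}|$ and therefore $\big(\sum_j R_{\jr}^2\big)^{1/2}\le C\|\eta^1_{\bm{v}}\|_{\Gamma_h}\le Ch^{\min(k,s)+1/2}\|v_1\|_{H^{s+1}(\Omega)}$ by the trace bound from the first step. Combining, $\|\phi_h\|+h^{1/2}\|\phi_h\|_{\Gamma_h}\le Ch^{\min(k,s)+1}\|v_1\|_{H^{s+1}(\Omega)}$, and then $\eta^2_{\bm{v}}=(v_2-\mathbb{G}_{\tilde\theta}v_2)-\phi_h$ with the GGR estimate for $v_2-\mathbb{G}_{\tilde\theta}v_2$ gives the $i=2$ case. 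The main obstacle is exactly this perturbation step: one has to make sure the extra interface contribution does not lower the order, which works only because $\eta^1_{\bm{v}}$ has the \emph{optimal} trace decay $O(h^{\min(k,s)+1/2})$ and because the corrector system is uniformly invertible; the remaining care is bookkeeping — tracking that the constants degenerate as $\theta\to\tfrac12$, and noting that $R_{\jr}$ needs enough regularity of $b(v_1)$ to be meaningful pointwise (automatic for the smooth solutions in the error analysis, or for $s\ge1$ by Sobolev embedding).
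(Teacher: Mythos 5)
Your argument is correct, but note that the paper itself does not prove this lemma at all: it simply quotes it from \cite[Lemma 3.1]{Yao2019NA}, so what you have written is a self-contained proof where the paper offers only a citation. Your route — treat $\mathbb{G}_\theta v_1$ as the standard GGR projection of \cite{MSW2016MC} (existence via the diagonally dominant periodic bidiagonal system for the degree-$k$ Legendre coefficients, plus the optimal $L^2$ and trace bounds), and then write $\widetilde{\mathbb{G}}_{\theta}v_2=\mathbb{G}_{\tilde\theta}v_2+\phi_h$ with $\phi_h|_{I_j}=c_jL_k$ determined by $\tilde\theta c_j+(-1)^k(1-\tilde\theta)c_{j+1}=R_{\jr}$, $R_{\jr}=(\theta-\frac12)\big(b(v_1)_x\jump{\eta^1_{\bm v}}\big)_{\jr}$ — is exactly the natural mechanism behind the cited result: the corrector system is uniformly invertible because $|\tilde\theta|\neq|1-\tilde\theta|$ (one solves the recursion in the stable direction, since with $\theta>\frac12$ the dominance sits on the $c_{j+1}$ entry), the scaling $\|\phi_h\|_{I_j}^2\simeq h_jc_j^2$ converts the $\ell^2$ bound on $(c_j)$ into the volume and trace bounds, and the whole perturbation is of optimal order only because $\|\eta^1_{\bm v}\|_{\Gamma_h}=O(h^{\min(k,s)+1/2})$. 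Two bookkeeping points are worth making explicit if you write this up: the constant you produce contains $\|b(v_1)_x\|_{L^\infty(\Omega)}$ (and blows up as $\theta\to\frac12$), so strictly speaking the estimate as stated requires enough regularity of $v_1$ for $b(v_1)_x$ to have nodal values and be bounded — e.g.\ $s\geq 1$ in one dimension, or the smooth exact solution $v_1=u$ for which the lemma is actually invoked in the error analysis; this mild imprecision is inherited from the statement itself, not introduced by your proof.
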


\subsubsection{An optimal error estimate}
\label{sec_optimal_errest_1d}

In this section, we present an optimal error estimate for the semi-discrete OFLDG method (\ref{scheme1.1})-(\ref{scheme1.2}). To this end,
we assume $f^\prime(u)\geq 0$ and adopt the upwind-biased numerical flux for $f(u)$.
\begin{thm}\label{errorest1d}
Let $\bm{w}=(u,q)^{T}$ be the exact solution of the equation \eqref{eq:eqn1.1}-\eqref{eq:eqn1.2}.
Suppose $u(x,t)\in L^{\infty}\big((0,T); H^{k+1}(\Omega)\big)$, $u_t(x,t)\in L^{2}\big((0,T); H^{k+1}(\Omega)\big)$,
$b(u),\,f(u)\in C^2$ and $f^\prime(u)\geq 0$.
Let $\bm{w}_h=(u_h,q_h)^T$ be the solution of the semi-discrete OFLDG scheme (\ref{scheme1.1})-(\ref{scheme1.2})
with the numerical fluxes (\ref{1dflux1})-(\ref{1dflux2}) and
\begin{align}\label{upwind_biased_flux}
\hat{f}\big( (u_h)_{\jr}^{-},(u_h)_{\jr}^{+} \big) = \theta f\big( (u_h)_{\jr}^{-} \big)
+ (1-\theta)f\big( (u_h)_{\jr}^{+} \big), \quad \theta>\frac{1}{2}.
\end{align}
The initial approximation is taken as $u_h(\cdot,0)=P_h^k u(\cdot,0)$, $P_h^k$ is the standard local $L^2$ projection.
Then we have the following optimal error estimate
\begin{align}\label{error_est_1d}
\|u(T)-u_h(T)\|\leq Ch^{k+1}, \quad k\geq 1,
\end{align}
where $C>0$ is a constant depending on $u$ and its derivatives but independent of $h$.
\end{thm}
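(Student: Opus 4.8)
The plan is to follow the standard energy argument for LDG error estimates, adapted to handle the new damping term. First I would set up the error decomposition using the GGR projection $\Pi$ from the previous subsection: write
\begin{align*}
u - u_h = \eta_{\bm{w}}^1 - \xi_u, \qquad q - q_h = \eta_{\bm{w}}^2 - \xi_q,
\end{align*}
where $\eta_{\bm{w}}^1 = u - \mathbb{G}_\theta u$, $\eta_{\bm{w}}^2 = q - \widetilde{\mathbb{G}}_\theta q$ are the projection errors (controlled optimally by the Lemma), and $\xi_u = u_h - \mathbb{G}_\theta u$, $\xi_q = q_h - \widetilde{\mathbb{G}}_\theta q \in V_h^k$ are the quantities to be estimated. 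Because the exact solution $(u,q)$ satisfies the same variational identities $H_j$, $G_j$ (with the damping term vanishing since the jumps of $u$ are zero), subtracting gives the error equations. Taking $v_h = \xi_u$, $r_h = \xi_q$, summing over $j$, and using the telescoping/periodicity, I would arrive at an energy identity of the form
\begin{align*}
\frac{1}{2}\frac{d}{dt}\|\xi_u\|^2 + \|\xi_q\|^2 + (\text{damping contribution}) = (\text{projection error terms}) + (\text{nonlinear terms}).
\end{align*}

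The key structural point is that the numerical fluxes \eqref{1dflux1}-\eqref{1dflux2}, together with the special choice of the GGR projection conditions \eqref{1dprojection1}-\eqref{1dprojection4}, are designed precisely so that the "bad" boundary terms involving $\xi$ cancel; this is exactly the mechanism exploited in \cite{Yao2019NA}. For the nonlinear fluxes $f(u)$ and $b(u)q$ I would use Taylor expansion around the exact solution, splitting into a linear principal part and a quadratic remainder; the remainder is bounded using the a priori assumption \eqref{prioriassump} together with inverse inequalities, producing terms of the form $C(\|\xi_u\|^2 + \|\xi_q\|^2)$ plus higher powers of $h$. The projection error terms are handled by \eqref{pro1d_est}, giving contributions $O(h^{2(k+1)})$ after Young's inequality, with $\|\xi_q\|^2$-type pieces absorbed into the left-hand side (this is where $f'(u) \geq 0$ and the upwind-biased flux $\theta > 1/2$ matter, to keep the viscous/convective boundary terms of the right sign).

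The genuinely new ingredient — and what I expect to be the main obstacle — is showing the damping term does not destroy optimal accuracy. Since the exact solution contributes nothing to $D_j$, the damping contribution in the error equation is
\begin{align*}
-\sum_j D_j(u_h, \xi_u) = \sum_j \sum_{\ell=0}^k \frac{\sigma_j^\ell(u_h)}{h_j} \int_{I_j} (u_h - P_h^{\ell-1} u_h)\,\xi_u \, dx.
\end{align*}
I would write $u_h - P_h^{\ell-1}u_h = (u_h - \mathbb{G}_\theta u) - (P_h^{\ell-1}u_h - \mathbb{G}_\theta u)$ etc., or more directly bound $\|u_h - P_h^{\ell-1}u_h\|_{I_j}$ via $\|\xi_u\|_{I_j} + \|u - P_h^{\ell-1}u\|_{I_j} + \text{projection terms}$, and crucially estimate the damping coefficient $\sigma_j^\ell(u_h)$: the jump $\jump{\partial_x^\ell u_h}$ at interfaces is $O(h^{k+1-\ell})$-ish in a superconvergence sense for smooth solutions, but a safer route (mirroring \cite{LLS2021SINUM}) is to bound $\sigma_j^\ell(u_h) \lesssim h^\ell \|\partial_x^\ell u_h\|_{\Gamma_h} \lesssim h^{\ell} \cdot h^{-\ell} h^{1/2}(\cdots)$ using inverse inequalities and the boundedness of $u_h$, so that $\sigma_j^\ell(u_h)/h_j$ times the $L^2$ factor of $u_h - P_h^{\ell-1}u_h$ yields a term that is either absorbed into $\frac{d}{dt}\|\xi_u\|^2$ by Gronwall or is genuinely $O(h^{k+1})$. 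One then closes with Gronwall's inequality, using that the initial error $\|\xi_u(0)\|$ is $O(h^{k+1})$ by the approximation property of $P_h^k$ and $\Pi$, and finally recovers \eqref{error_est_1d} via the triangle inequality with $\|\eta_{\bm{w}}^1(T)\|$.
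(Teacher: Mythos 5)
Your overall skeleton --- GGR-projection error decomposition, error equations, energy argument with $v_h=\xi_u$, $r_h=\xi_q$, Taylor expansion plus the a priori assumption \eqref{prioriassump} for the nonlinear fluxes, Gronwall and the triangle inequality with the initial projection error --- is exactly the paper's route, and those parts are fine. The gap is in the one genuinely new step, the damping term, where the estimates you propose would not close. First, note the sign: substituting $e_u=\eta_u-\xi_u$ into the error equation, the damping enters the $\xi$-energy identity as $+\sum_j D_j(u_h,\xi_u)$, i.e.\ as $-\sum_{j,\ell}\frac{\sigma_j^\ell(u_h)}{h_j}\int_{I_j}(u_h-P_h^{\ell-1}u_h)\,\xi_u\,dx$ (you wrote the opposite sign). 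This matters because the argument hinges on splitting $u_h-P_h^{\ell-1}u_h=(\xi_u-P_h^{\ell-1}\xi_u)+\big(\mathbb{G}_\theta u-P_h^{\ell-1}(\mathbb{G}_\theta u)\big)$ and using the $L^2$-orthogonality of $P_h^{\ell-1}$, so that $\int_{I_j}(\xi_u-P_h^{\ell-1}\xi_u)\xi_u\,dx=\int_{I_j}(\xi_u-P_h^{\ell-1}\xi_u)^2dx\geq 0$ appears with a favorable sign and is simply dropped; only the cross term involving $\mathbb{G}_\theta u-P_h^{\ell-1}(\mathbb{G}_\theta u)$ must be estimated. Your alternative of bounding $\|u_h-P_h^{\ell-1}u_h\|_{L^2(I_j)}$ by $\|\xi_u\|_{L^2(I_j)}+\cdots$ leaves a piece of size $\frac{\sigma_j^\ell(u_h)}{h_j}\|\xi_u\|_{L^2(I_j)}^2$ on the right-hand side, which is harmless only if $\sigma_j^\ell(u_h)$ is itself small --- which brings us to the second problem.

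Your ``safer route'' for the coefficient, $\sigma_j^\ell(u_h)\lesssim h^\ell\cdot h^{-\ell}\cdot(\text{boundedness of }u_h)=O(1)$ via inverse inequalities, is too crude to give any convergence rate: combined with the factor $1/h_j$ it produces either a term $Ch^{-1}\|\xi_u\|^2$ (so Gronwall yields a constant like $e^{CT/h}$, not uniform in $h$) or, after pairing with $\|u-P_h^{\ell-1}u\|_{L^2(I_j)}\lesssim h^{\max(\ell,1)+\frac12}$, a term of order $h^{\max(\ell,1)-1}\|\xi_u\|$, which for $\ell=0,1$ is $O(1)\,\|\xi_u\|$ and destroys optimality. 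The bound that works is the one you flirted with and then abandoned, and it is not superconvergence but plain consistency: since the exact solution is smooth, $\jump{\partial_x^\ell u}=0$ at every node, hence $\sigma_j^\ell(u_h)=\frac{2(2\ell+1)h^\ell}{(2k-1)\ell!}\big(\jump{\partial_x^\ell (u_h-u)}_{\jr}^2+\jump{\partial_x^\ell (u_h-u)}_{\jl}^2\big)^{1/2}\lesssim h^\ell\big(|\jump{\partial_x^\ell\xi_u}|+|\jump{\partial_x^\ell\eta_u}|\big)$. Pairing $\sigma_j^\ell(u_h)/h_j$ with the small factor $\|\mathbb{G}_\theta u-P_h^{\ell-1}(\mathbb{G}_\theta u)\|_{L^2(I_j)}\lesssim h^{\max(\ell,1)+\frac12}\|u\|_{H^{k+1}(\Omega)}$ (not with $\|\xi_u\|_{L^2(I_j)}$ alone), then using inverse/trace inequalities for the $\xi_u$-jumps and projection estimates for the $\eta_u$-jumps, gives $\sum_j D_j(u_h,\xi_u)\leq C\|\xi_u\|^2+Ch^{2k+2}$, which is exactly what the Gronwall step needs. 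Without the dropped nonnegative quadratic term and this consistency bound on $\sigma_j^\ell(u_h)$, your treatment of the damping term does not yield \eqref{error_est_1d}.
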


\begin{proof}
Firstly, we rewrite the error $e_{\bm{w}}=\bm{w}-\bm{w_h}=(u-u_h,p-p_h)^T$ in two parts:
$$e_u=u-u_h=\eta_u-\xi_u,~\eta_u=u-\mathbb{G}_\theta\,u,~\xi_u=u_h-\mathbb{G}_\theta\,u;$$
$$e_q=q-q_h=\eta_q-\xi_q,~~\eta_q=q-\widetilde{\mathbb{G}}_{\theta} \,q,~~~\xi_q
= q_h-\widetilde{\mathbb{G}}_{\theta} \,q.$$
Since the exact solution $\bm{w}=(u,q)^T$ also satisfies the OFLDG scheme
\eqref{scheme1.1}-\eqref{scheme1.2}, we have the following error equations: $\forall \, v_h, r_h\in V_{h}^k$,
\begin{align}
((e_u)_t, v_h)_j&= H_{j}\big(h_u(\bm{w})-h_u(\bm{w}_h),v_h\big)-D_{j}(u_h,v_h),\label{erroreq1_1d}\\
(e_q, r_h)_j&=G_{j}\big(h_q(\bm{w})-h_q(\bm{w}_h), r_h\big).\label{erroreq2_1d}
\end{align}
Taking $v_h=\xi_u$, $r_h=\xi_q$ and adding up \eqref{erroreq1_1d}-\eqref{erroreq2_1d}, we obtain
\begin{align*}
((\xi_u)_t, \xi_u)_j+(\xi_q, \xi_q)_j=&((\eta_u)_t, \xi_u)_j+(\eta_q, \xi_q)_j-H_{j}\big(h_u(\bm{w})-h_u(\bm{w}_h),\xi_u\big)\\
&-G_{j}\big(h_q(\bm{w})-h_q(\bm{w}_h), \xi_q\big)+D_{j}(u_h,\xi_u).
\end{align*}
Summing over $j$, we have
\begin{equation}
\begin{split}
\frac{1}{2}\frac{d}{dt}\|\xi_u\|^2+\|\xi_q\|^2
=&\sum_{j=1}^N((\eta_u)_t, \xi_u)_j+\sum_{j=1}^N(\eta_q, \xi_q)_j+\sum_{j=1}^N D_j(u_h,\xi_u)\\
&\!-\!\sum_{j=1}^{N}\Big(H_{j}\big(h_u(\bm{w})\!-\!h_u(\bm{w}_h),\xi_u\big)+G_{j}\big(h_q(\bm{w})-h_q(\bm{w}_h), \xi_q\big) \Big).\label{eq1d}
%=&((\eta_u)_t, \xi_u)+(\eta_q, \xi_q)+D(u_h,\xi_u)+A_1(u,u_h,\xi_u)+A_2(\bm{w},\bm{w}_h,\xi_u)+A_3(\bm{w},\bm{w}_h,\xi_q).
\end{split}
\end{equation}
%where
%\begin{align*}
%&A_1(u,u_h,\xi_u)=-(f(u)-f(u_h),(\xi_u)_x)-\sum_{j=1}^{N}\Big(f(u_{\jr})-\hat{f}((u_h)_{\jr}^{-},(u_h)_{\jr}^{+})\Big)\jump{\xi_u}_{\jr},\\
%&A_2(\bm{w},\bm{w}_h,\xi_u)=(b(u)q-b(u_h)q_h,(\xi_u)_x)+\sum_{j=1}^{N}\Big(b(u_{\jr})q_{\jr}+\frac{\jpl g(u_h)\jpr_{\jr}}{\jpl u_h \jpr_{\jr}}\mean{q_h}_{\jr}+\gamma\jpl q_h \jpr_{\jr}\Big)\jump{\xi_u}_{\jr},\\
%&A_3(\bm{w},\bm{w}_h,\xi_q)=(g(u)-g(u_h),(\xi_q)_x)+\sum_{j=1}^{N}\Big(g(u_{\jr})+\mean{g(u_h)}_{\jr}-\gamma\jump{u_h}_{\jr}\Big)\jump{\xi_q}_{\jr}.
%\end{align*}
Now we proceed to estimate the terms in the right hand side of \eqref{eq1d}. First we have
\begin{align}
\sum_{j=1}^N((\eta_u)_t, \xi_u)_j&\leq Ch^{k+1}\|\xi_u\|\leq \frac{1}{4}\|\xi_u\|^2+Ch^{2k+2},\label{est1.1}\\
\sum_{j=1}^N(\eta_q, \xi_q)_j&\leq Ch^{k+1}\|\xi_q\|\leq \frac{1}{4}\|\xi_q\|^2+Ch^{2k+2}.\label{est1.2}
\end{align}
%The estimate for the $A_1(u,u_h,\xi_u),~A_2(\bm{w},\bm{w}_h,\xi_u)$ and $A_3(\bm{w},\bm{w}_h,\xi_q)$ are given in Lemma 3.2 and Lemma 3.3 of \cite{Yao2019NA}. Thus, we have the following results:
%\begin{lem}\label{est1.3}
%There exist a constant $C$ independent of $h$, such that
%\begin{align*}
%A_1(u,u_h,\xi_u)\leq -\left(\sum_{j=1}^{N}(\theta-\frac{1}{2})f^{\prime}(u)_{\jr}\jump{\xi_u}_{\jr}^2\right)+C(1+h^{-1}\|e_u\|_{L^\infty})(\|\xi_u\|^2+h^{2k+2}).
%\end{align*}
%\end{lem}
%
%\begin{lem}\label{est1.4}
%There exist a constant $C$ independent of $h$, such that
%\begin{align*}
%|A_2(\bm{w},\bm{w}_h,\xi_u)+A_3(\bm{w},\bm{w}_h,\xi_q)|\leq &\frac{1}{4}\|\xi_q\|^2+C(1+h^{-1}\|e_u\|_{L^\infty}+h^{-2}\|e_u\|_{L^\infty}^2)h^{2k+2}\\
%&+C(1+h^{-1}\|e_u\|_{L^\infty}+h^{-2}\|e_u\|_{L^\infty}^2+h^{-2}\|e_u\|_{L^\infty}^4)\|\xi_u\|^2.
%\end{align*}
%\end{lem}
With the help of the a priori assumption \eqref{prioriassump}, we could get the estimates
for the last term in \eqref{eq1d} as in \cite[Lemma 3.2, Lemma 3.3]{Yao2019NA},
\begin{align}
-\sum_{j=1}^{N}\Big(H_{j}\big(h_u(\bm{w})-h_u(\bm{w}_h),\xi_u\big)
+ G_{j}\big(h_q(\bm{w})-h_q(\bm{w}_h), \xi_q\big) \Big)
\leq \frac{1}{4}\|\xi_q\|^2+C\|\xi_u\|^2 + Ch^{2k+2}.\label{est1.3}
\end{align}
For the damping term $D_j(u_h,\xi_u)$, we have
\begin{align*}
\sum_{j=1}^N D_j(u_h,\xi_u)&=-\sum_{j=1}^{N}\sum_{\ell=0}^{k}
\frac{\sigma_j^\ell (u_h)}{h_j}\int_{I_j}(u_h-P_h^{\ell-1}u_h)\xi_u \, dx  \\
&=-\sum_{j=1}^{N}\sum_{\ell=0}^{k}\frac{\sigma_j^\ell(u_h)}{h_j}
\int_{I_j}\Big(\xi_u-P_h^{\ell-1}\xi_u\Big)^2+\Big(\mathbb{G}_\theta\,u-P_h^{\ell-1}(\mathbb{G}_\theta\,u)\Big) \xi_u\, dx\\
&\leq -\sum_{j=1}^{N}\sum_{\ell=0}^{k}\frac{\sigma_j^\ell(u_h)}{h_j}
\int_{I_j}\Big(\mathbb{G}_\theta\,u-P_h^{\ell-1}(\mathbb{G}_\theta\,u)\Big) \xi_u\, dx\\
&\leq\sum_{j=1}^{N}\sum_{\ell=0}^{k}\frac{\sigma_j^\ell(u_h)}{h_j}
\|\mathbb{G}_\theta\,u-P_h^{\ell-1}(\mathbb{G}_\theta\,u)\|_{L^2(I_j)} \|\xi_u\|_{L^2(I_j)}.
\end{align*}
Thanks to the properties of projections $\mathbb{G}_\theta$ and $P_h^{\ell-1}$, we have
\begin{align*}
\|\mathbb{G}_\theta\,u-P_h^{\ell-1}(\mathbb{G}_\theta\,u)\|_{L^2({I_j})}
\leq &\|\mathbb{G}_\theta\,u-u\|_{L^2({I_j})}+\|u-P_h^{\ell-1}u\|_{L^2({I_j})}
+ \| P_h^{\ell-1}(\mathbb{G}_\theta\,u-u)\|_{L^2(I_j)}\\
\leq \,&2\|\mathbb{G}_\theta\,u-u\|_{L^2({I_j})}+\|u-P_h^{\ell-1}u\|_{L^2({I_j})}\\
\leq\,&Ch^{k+1}\|u\|_{H^{k+1}(\Omega)}+Ch^{\max(\ell,1)+\frac 12}\|u\|_{W^{\max(\ell,1),\infty}(\Omega)}\\
\leq \,& Ch^{k+1}\|u\|_{H^{k+1}(\Omega)}+Ch^{\max(\ell,1)+\frac{1}{2}}\|u\|_{H^{\max(\ell,1)+1}(\Omega)}\\
\leq \,& Ch^{\max(\ell,1)+\frac{1}{2}}\|u\|_{H^{k+1}(\Omega)}.
\end{align*}
For the coefficient $\sigma_j^\ell(u_h)$, we have
\begin{align*}
\sigma_j^\ell(u_h)^2&=\frac{4(2\ell+1)^2h^{2\ell}}{(2k-1)^2(\ell!)^2}
\Big(\jump{\partial_x^\ell(u_h-u)}_{\jl}^2+\jump{\partial_x^\ell(u_h-u)}_{\jr}^2\Big)\\
&\leq C h^{2\ell}\Big(\jump{\partial_x^\ell \xi_u}^2_{\jl}+\jump{\partial_x^\ell \xi_u}^2_{\jr}\Big)
+ Ch^{2\ell}\Big(\jump{\partial_x^\ell \eta_u}^2_{\jl}+\jump{\partial_x^\ell \eta_u}^2_{\jr}\Big).
\end{align*}
Thus, we have
\begin{align*}
\sum_{j=1}^N D_j(u_h,\xi_u)
\leq& \sum_{j=1}^{N}\sum_{\ell=0}^{k}Ch^{\ell+\max(\ell,1)-\frac{1}{2}}
\Big(\jump{\partial_x^\ell \xi_u}^2_{\jl}+\jump{\partial_x^\ell \xi_u}^2_{\jr}\Big)^{\frac 12}
\|\xi_u\|_{L^2(I_j)}\\
&+\sum_{j=1}^{N}\sum_{\ell=0}^{k}Ch^{\ell+\max(\ell,1)-\frac{1}{2}}
\Big(\jump{\partial_x^\ell \eta_u}^2_{\jl}+\jump{\partial_x^\ell \eta_u}^2_{\jr}\Big)^{\frac 12}\|\xi_u\|_{L^2(I_j)}\\
\leq&C\left(\Big(\sum_{j=1}^{N}\sum_{\ell=0}^{k}h^{2\ell+1}\jump{\partial_x^\ell \xi_u}^2_{\jl}\Big)^{\frac 12}
+ \Big(\sum_{j=1}^{N}\sum_{\ell=0}^{k}h^{2\ell+1}\jump{\partial_x^\ell \eta_u}^2_{\jl}\Big)^{\frac 12}\right)\|\xi_u\| \\
\leq&C \|\xi_u\|^2+C\|\eta_u\| \|\xi_u\|.
\end{align*}
By the Cauchy-Schwarz inequality and \eqref{pro1d_est}, we have
\begin{align}
\sum_{j=1}^N D_j(u_h,\xi_u) \leq C\|\xi_u\|^2+Ch^{2k+2}. \label{est1.5}
\end{align}
Therefore, combining equations (\ref{est1.1})-(\ref{est1.5}), we have
\begin{align*}
\frac{1}{2}\frac{d}{dt}\|\xi_u\|^2+\|\xi_q\|^2\leq Ch^{2k+2}+C\|\xi_u\|^2+\frac{1}{2}\|\xi_q\|^2.
\end{align*}
With the Gr\"onwall's inequality and initial discretization, we can obtain
\begin{align}
\|\xi_u\|\leq C h^{k+1}.
\end{align}
Finally, combining with the triangle inequality, we obtain the optimal error estimate
\eqref{error_est_1d}.
\end{proof}

\begin{rem}
Note that the upwind biased flux \eqref{upwind_biased_flux} is chosen only to obtain the optimal error estimates. One can also obtain the $(k+\frac 12)$-th order error estimate for the monotone numerical flux by the analogous arguments in \cite{Xu2007CMAME}.
\end{rem}

\section{The OFLDG scheme in multidimensions}
\label{sec_2d}

In this section, we extend the OFLDG method to the multidimensional case.
For simplicity, we only consider the two-dimensional space, and the higher dimensional cases can be
obtained directly by the same line as the two-dimensional one.
We now consider the two-dimensional nonlinear degenerate parabolic equations:
\begin{align}
u_t+\big(f_1(u)-a_{11}(u)u_x-a_{12}(u)u_y\big)_x+\big(f_2(u)-a_{21}(u)u_x-a_{22}(u)u_y\big)_y=0,\label{2d_model}
\end{align}
with the periodic boundary conditions or compactly supported boundary conditions.  $(x, y) \in \Omega = [a_x, b_x]\times [a_y, b_y]$, $t\in (0,T]$, and
$f_1(u)$, $f_2(u)$ are convective flux functions. The diffusion tensor $\bm{a}(u)$
is positive semidefinite and given as
\begin{align*}
\bm{a}(u) =
\begin{pmatrix}
a_{11}(u) & a_{12}(u) \\
a_{21}(u) & a_{22}(u)
\end{pmatrix} .
\end{align*}
Without loss of generality, we take $a_{11}(u)=a_1(u)\geq 0$, $a_{22}(u)=a_2(u)\geq 0$
and $a_{12}(u)=a_{21}(u)=0.$
%{\color{red}Q: correct?}

\subsection{Basic notations}
\label{sec_notation_2d}

Firstly, we assume that a shape regular tessellation of $\Omega$ is given as $\Omega_h$, with rectangular elements
$$K_{i, j}=I_i\times J_j = \big[ x_{\il},x_{\ir} \big]\times \big[ y_{\jl},y_{\jr} \big],~~i\in Z_{N_x},~~j\in Z_{N_y}.$$
The union of all element boundaries in $\Omega_h$ is denoted as $\Gamma_h$.
We define the finite element space with the partition $\Omega_h$,
\begin{align}
W_h^{k}=\{v_h\in L^2(\Omega): v_h\big|_{K_{i, j}}\in \mathcal{Q}^k(K_{i, j}),~~i\in Z_{N_x}, j\in Z_{N_y}\},
\end{align}
where $\mathcal{Q}^k(K_{i, j}) = \mathcal{P}^{k}(I_i)\otimes  \mathcal{P}^{k}(J_j)$ is the tensor
product of two polynomial spaces in which the polynomial degree is at most $k$ for each variable.
Now we define
$$
h_i^x=x_{\ir}-x_{\il},~h_j^y=y_{\jr}-y_{\jl},~h_{K_{i, j}} = \max\{h_i^x, h_j^y\},~  h = \max_{i, j}\{h_{K_{i, j}}\}.
$$
For $i\in Z_{N_x}, j\in Z_{N_y}$, we denote $(v_h)_{\ir, y}^{\pm} = v_h(x_{\ir}^{\pm},y)$, $(v_h)_{x,\jr}^{\pm}
= v_h(x,y_{\jr}^{\pm})$, $(v_h)_{\ir,\jr}^{\pm,\pm}=v_h(x_{\jr}^{\pm},y_{\jr}^{\pm})$.
Then, we define the average and jump of $v_h$ at  $( x_{\ir}, y)$ and $( x, y_{\jr} )$ as follows,
$$\mean{v_h}_{\ir,y}=\frac{1}{2}\big(v_h(x_{\ir}^+,y)+v_h(x_{\ir}^-,y)\big),
\quad\jump{v_h}_{\ir,y}=v_h(x_{\ir}^+,y)-v_h(x_{\ir}^-,y).$$
$$\mean{v_h}_{x,\jr}=\frac{1}{2}\big(v_h(x,y_{\jr}^+)+v_h(x,y_{\jr}^-)\big),
\quad\jump{v_h}_{x,\jr}=v_h(x,y_{\jr}^+)-v_h(x,y_{\jr}^-).$$
The semi-norm on element boundaries in two-dimensional space is defined as follows
%\begin{align*}
%\|v_h\|_{\Gamma_h}^2 = \sum_{i=1}^{N_x}\sum_{j=1}^{N_y}\int_{I_i} (v_h^2)^+_{x,\jl} + (v_h^2)^-_{x,\jr}\, dx
%+ \sum_{i=1}^{N_x}\sum_{j=1}^{N_y}\int_{J_j} (v_h^2)^+_{\il,y}+(v_h^2)^-_{\ir,y}\, dy .
%\end{align*}
\begin{align*}
\|v_h\|_{\Gamma_h}^2 = \sum_{i,j}\int_{I_i} \big((v_h)^+_{x,\jl}\big)^2 + \big((v_h)^-_{x,\jr}\big)^2\, dx
+ \sum_{i,j}\int_{J_j} \big((v_h)^+_{\il,y}\big)^2+\big((v_h)^-_{\ir,y}\big)^2\, dy .
\end{align*}

\subsection{The OFLDG scheme}
\label{sec_ofldg_2d}

In this section, we present the OFLDG scheme for the two-dimensional nonlinear parabolic equation
\eqref{2d_model}. Similar to the one-dimensional case, we introduce auxiliary variables
$q_1=b_1(u)u_x$, $q_2=b_2(u)u_y$, with $b_1(u)=\sqrt{a_1(u)}$ and $b_2(u)=\sqrt{a_2(u)}$
to rewrite \eqref{2d_model} into a first order system,
\begin{align}
&u_t+\big(f_1(u)-b_1(u)q_1\big)_x+\big(f_2(u)-b_2(u)q_2\big)_y=0,\label{eq:eqn2_1}\\
&q_1-g_1(u)_x=0, \label{eq:eqn2_2}\\
&q_2-g_2(u)_y=0, \label{eq:eqn2_3}
\end{align}
where $\displaystyle g_1(u)=\int^u b_1(u)\,du$, $\displaystyle g_2(u)=\int^u b_2(u)\,du$.
We define the unknown variable $\bm{w}=(u, q_1, q_2)^{T}$ and the flux function
\begin{align*}
\bm{h}(\bm{w})=&\big(h_u^1(\bm{w}), h_u^2(\bm{w}), h_q^1(\bm{w}), h_q^2(\bm{w})\big)^T\\
=& \big(f_1(u)-b_1(u)q_1, \,f_2(u)-b_2(u)q_2, \,-g_1(u), \,-g_2(u)\big)^T.
\end{align*}
The semi-discrete OFLDG scheme is defined as follows:
seek $\bm{w}_h=(u_h,q_{1h},q_{2h})^{T} \in [W_h^k]^3$, such that for all test functions
$v_h, r_h, p_h \in W_h^k$ and $i\in Z_{N_x}, j\in Z_{N_y}$, we have
\begin{align}
\int_{K_{i, j}} (u_h)_t v_hdxdy=&H_{ij}^1\big( h_u^1(\bm{w}_h), v_h\big)+H_{ij}^2\big( h_u^2(\bm{w}_h), v_h\big)+D_{ij}(u_h,v_h),\label{scheme2.1}\\
\int_{K_{i, j}} q_{1h}\, r_hdxdy=&G_{ij}^1\big( h_q^1(\bm{w}_h), r_h\big),\label{scheme2.2}\\
\int_{K_{i, j}} q_{2h}\, p_hdxdy=&G_{ij}^2\big( h_q^2(\bm{w}_h), p_h\big),\label{scheme2.3}
\end{align}
where $H_{ij}^1(\cdot, \cdot),~H_{ij}^2(\cdot, \cdot),~G_{ij}^1(\cdot, \cdot),~G_{ij}^2(\cdot, \cdot)$ and $D_{ij}(\cdot, \cdot)$ are defined as follows:
\begin{align*}
H_{ij}^1\big( h_u^1(\bm{w}_h), v_h\big)=&\int_{K_{i, j}} h_u^1(\bm{w}_h) (v_h)_x \, dxdy
- \int_{J_j}\big(\widehat{ h_u^1}(\bm{w}_h) v_h^-\big)_{\ir,y}-\big(\widehat{ h_u^1}(\bm{w}_h) v_h^+\big)_{\il,y} \, dy,\\
H_{ij}^2\big( h_u^2(\bm{w}_h), v_h\big)=&\int_{K_{i, j}} h_u^2(\bm{w}_h) (v_h)_y \, dxdy
- \int_{I_i}\big(\widehat{ h_u^2}(\bm{w}_h) v_h^-\big)_{x,\jr}-\big(\widehat{ h_u^2}(\bm{w}_h) v_h^+\big)_{x,\jl} \, dx,\\
G_{ij}^1\big( h_q^1(\bm{w}_h), r_h\big)=&\int_{K_{i, j}} h_q^1(\bm{w}_h) (r_h)_x \, dxdy
- \int_{J_j}\big(\widehat{ h_q^1}(\bm{w}_h) r_h^-\big)_{\ir,y}-\big(\widehat{ h_q^1}(\bm{w}_h) r_h^+\big)_{\il,y} \, dy,\\
G_{ij}^2\big( h_q^2(\bm{w}_h), p_h\big)=&\int_{K_{i, j}} h_q^2(\bm{w}_h) (p_h)_y\, dxdy
- \int_{I_i}\big(\widehat{ h_q^2}(\bm{w}_h) p_h^-\big)_{x,\jr}-\big(\widehat{ h_q^2}(\bm{w}_h) p_h^+\big)_{x,\jl} \, dx,\\
D_{ij}(u_h,v_h) = &-\sum_{\ell=0}^{k}\frac{\sigma^\ell_{K_{i, j}}(u_h)}{h_{K_{i, j}}}\int_{K_{i, j}}(u_h-P_h^{\ell-1}u_h)v_h \, dxdy.
\end{align*}
The numerical fluxes are taken as follows:
\begin{align}
\widehat{ h_u^1}(\bm{w}_h)_{\ir, y}&\,= \hat{f_1}\big( (u_{h})_{\ir, y}^-, (u_{h})_{\ir, y}^+ \big)
- \frac{\jpl g_1(u_h)\jpr_{\ir,y}}{\jpl u_h \jpr_{\ir,y}}\mean{q_{1h}}_{\ir,y}-\gamma_1\jpl q_{1h} \jpr_{\ir,y}, \label{2dflux1}\\
\widehat{ h_u^2}(\bm{w}_h)_{x,\jr}&= \hat{f_2}\big((u_{h})_{x, \jr}^-, (u_{h})_{x, \jr}^+ \big)
\!-\! \frac{\jpl g_2(u_h)\jpr_{x,\jr}}{\jpl u_h \jpr_{x,\jr}}\mean{q_{2h}}_{x,\jr} - \gamma_2\jpl q_{2h} \jpr_{x,\jr},\label{2dflux2}\\
\widehat{ h_q^1}(\bm{w}_h)_{\ir,y}&=-\mean{g_1(u_h)}_{\ir,y}+\gamma_1\jpl u_h \jpr_{\ir,y},\label{2dflux3}\\
\widehat{ h_q^2}(\bm{w}_h) _{x,\jr}&=-\mean{g_2(u_h)}_{x,\jr}+\gamma_2\jpl u_h \jpr_{x,\jr},\label{2dflux4}
\end{align}
where
\[\gamma_1 = \Big(\theta_1-\frac{1}{2} \Big) \frac{\jpl g_1(u_h)\jpr_{\ir,y}}{\jpl u_h \jpr_{\ir,y}}, \quad
\gamma_2 = \Big( \theta_2-\frac{1}{2} \Big) \frac{\jpl g_2(u_h)\jpr_{x,\jr}}{\jpl u_h \jpr_{x,\jr}},\quad
\theta_1, ~\theta_2 \in \mathbb{R}.\]
The $P_h^{\ell-1}$ in the damping term is the standard local $L^2$ projection into $W_h^{\ell-1}, \ell=1,\cdots,k$,
and we define $P_h^{-1}=P_h^{0}$.
The damping coefficient $\sigma_{K_{i, j}}^\ell(u_h)$ is defined as follows:
\begin{align}
\sigma_{K_{i, j}}^\ell(u_h)=\frac{2(2\ell+1)}{(2k-1)}\frac{h^{\ell}}{\ell!}\sum_{|\bm{\alpha}|
=\ell}\Big(\frac{1}{N_e}\sum_{\bm{v}\in K_{i, j}}\big(\jump{\partial^{\bm{\alpha}}u_h}\Big|_{\bm{v}}\big)^2\Big)^{\frac 12}.
\end{align}
Here we only consider the jump of $u_h$ on the vertex $\bm{v}$ of two adjacent cells which are shared with edge. $N_e$ is number of vertexes of $K_{i,j}$.
For more details, see \cite{LLS2021SINUM}.
For $L^2$-stability of the scheme \eqref{scheme2.1}-\eqref{scheme2.3}, we have the following theorem:
\begin{thm}
We assume that simulation over $K_{i, j}\in \Omega_h$ with the periodic or compactly supported boundary conditions,
then the solution $\bm{w}_h=(u_h,q_{1h},q_{2h})^{T}$ of the semi-discrete OFLDG scheme
(\ref{scheme2.1})-(\ref{scheme2.3}) with the numerical fluxes (\ref{2dflux1})-(\ref{2dflux4}) is stable in the $L^2$ norm, i.e
\begin{align}
\frac{1}{2}\frac{d}{dt}\|u_h\|^2+\|q_{1h}\|^2+\|q_{2h}\|^2\leq 0.
\end{align}
\end{thm}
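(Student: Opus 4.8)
The plan is to reproduce the structure of the one–dimensional stability proof: test each of the three equations against the corresponding component of the discrete solution, exploit a skew–symmetry–type cancellation between the primal equation and each auxiliary equation, and bound the damping contribution by the $L^2$–projection orthogonality. Concretely, I would take $v_h=u_h$ in \eqref{scheme2.1}, $r_h=q_{1h}$ in \eqref{scheme2.2}, $p_h=q_{2h}$ in \eqref{scheme2.3}, and sum over all cells $K_{i,j}$, which gives
\[
\frac{1}{2}\frac{d}{dt}\|u_h\|^2+\|q_{1h}\|^2+\|q_{2h}\|^2
=\sum_{i,j}\Big(H_{ij}^1+H_{ij}^2+G_{ij}^1+G_{ij}^2\Big)+\sum_{i,j}D_{ij}(u_h,u_h),
\]
with the arguments as in \eqref{scheme2.1}--\eqref{scheme2.3}. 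The goal is then to show the first sum equals $-\sum_{i,j}\int_{J_j}\Theta^1_{\ir,y}\,dy-\sum_{i,j}\int_{I_i}\Theta^2_{x,\jr}\,dx\le 0$ for suitable nonnegative edge quantities $\Theta^1,\Theta^2$, and that the damping sum is $\le 0$.

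For the $x$–direction I would combine $H_{ij}^1(h_u^1(\bm{w}_h),u_h)$ with $G_{ij}^1(h_q^1(\bm{w}_h),q_{1h})$. In the volume integrals, using $b_1(u_h)(u_h)_x=\partial_x g_1(u_h)$ and $f_1(u_h)(u_h)_x=\partial_x F_1(u_h)$ with $F_1'=f_1$, together with the product rule, the two volume contributions collapse to $\int_{J_j}\big[F_1(u_h)-q_{1h}g_1(u_h)\big]_{x_{\il}^+}^{x_{\ir}^-}\,dy$, a pure $x$–boundary term. Summing over $i$ and $j$, the outer boundary terms cancel by periodicity (or vanish by compact support), and at each interior vertical edge $x_{\ir}$ the boundary values combine with the numerical fluxes $\widehat{h_u^1}$, $\widehat{h_q^1}$ from \eqref{2dflux1}, \eqref{2dflux3}. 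Using the elementary identity $\jump{pq}=\mean{p}\jump{q}+\jump{p}\mean{q}$ and the precise choice of $\gamma_1$, every term carrying $q_{1h}$ or $g_1(u_h)$ cancels identically — exactly as in the one–dimensional proof — leaving
\[
-\int_{J_j}\Theta^1_{\ir,y}\,dy,\qquad
\Theta^1_{\ir,y}=\int_{(u_h)_{\ir,y}^{-}}^{(u_h)_{\ir,y}^{+}}\Big(f_1(y)-\widehat{f_1}\big((u_h)_{\ir,y}^{-},(u_h)_{\ir,y}^{+}\big)\Big)\,dy\ \ge\ 0,
\]
the nonnegativity being the standard property of the monotone numerical flux $\widehat{f_1}$. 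The $y$–direction terms $H_{ij}^2$, $G_{ij}^2$ are treated in exactly the same way, with $(b_2,g_2,F_2,\gamma_2,\theta_2)$ replacing $(b_1,g_1,F_1,\gamma_1,\theta_1)$, integration by parts in $y$, and the bookkeeping carried out over horizontal edges $y_{\jr}$; this produces $-\int_{I_i}\Theta^2_{x,\jr}\,dx\le 0$.

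For the damping term I would use that $P_h^{\ell-1}$ is the local $L^2$ projection onto $W_h^{\ell-1}$ on each $K_{i,j}$, so $\int_{K_{i,j}}(u_h-P_h^{\ell-1}u_h)(P_h^{\ell-1}u_h)\,dxdy=0$ and hence $\int_{K_{i,j}}(u_h-P_h^{\ell-1}u_h)u_h\,dxdy=\|u_h-P_h^{\ell-1}u_h\|_{L^2(K_{i,j})}^2\ge 0$; since $\sigma_{K_{i,j}}^\ell(u_h)\ge 0$ and $h_{K_{i,j}}>0$ this yields $\sum_{i,j}D_{ij}(u_h,u_h)\le 0$. Collecting the three pieces gives
\[
\frac{1}{2}\frac{d}{dt}\|u_h\|^2+\|q_{1h}\|^2+\|q_{2h}\|^2
=-\sum_{i,j}\int_{J_j}\Theta^1_{\ir,y}\,dy-\sum_{i,j}\int_{I_i}\Theta^2_{x,\jr}\,dx+\sum_{i,j}D_{ij}(u_h,u_h)\ \le\ 0.
\]
The only delicate point is the second step: carefully organizing the edge integrals in two dimensions (vertical versus horizontal edges, and the closing of the outer boundary sum) and checking that all jump/average terms attached to the diffusion flux cancel exactly. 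This is purely algebraic — driven by $\jump{pq}=\mean{p}\jump{q}+\jump{p}\mean{q}$ and the specific definitions of $\gamma_1,\gamma_2$ — so no estimate or regularity assumption is needed, and once the one–dimensional cancellation is in hand the two–dimensional case is essentially a bookkeeping exercise.
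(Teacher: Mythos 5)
Your proposal is correct and follows exactly the route the paper intends: the paper omits the two-dimensional proof, stating it is "similar to the one-dimensional case," and your argument is precisely that 1D proof (cell-by-cell testing with $u_h$, $q_{1h}$, $q_{2h}$, reduction of the volume terms to edge terms, exact cancellation of the diffusion-flux contributions via $\jump{pq}=\mean{p}\jump{q}+\jump{p}\mean{q}$ and the choice of $\gamma_1,\gamma_2$, nonnegativity of the monotone-flux cell-entropy terms $\Theta^1,\Theta^2$, and nonpositivity of the damping term by $L^2$-projection orthogonality) carried out direction by direction. No gaps; the edge bookkeeping you flag as the only delicate point works out exactly as in one dimension.
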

The proof of this theorem is similar to the one-dimensional case and omit it here.

\subsection{Error estimates}
\label{sec_errest_2d}

In this subsection, we consider the error estimate of the OFLDG scheme  (\ref{scheme2.1})-(\ref{scheme2.3}) with the periodic boundary condition.
Actually, comparing to the LDG method in \cite{Yao2019NA}, the proposed OFLDG scheme has an additional
damping term to control the spurious oscillation.
Hence, we only need to prove that the damping term does not destroy the accuracy.
Due to the nonlinear nature of the fluxes, {\em a priori assumption} \eqref{prioriassump} is needed in our proof,
In fact, this assumption can be justified for $k\geq 2$ in two-dimensional case,
see \cite{Zhang2004SINUM}.
For the linear flux functions, the assumption is not necessary. Similar to one-dimensional case, we
assume $f_i(u)$ and $b_i(u) \in C^2, ~i=1,2$.

\subsubsection{Two-dimensional projections}
\label{sec_projection_2d}
%\begin{align*}
%&v_{\ir,y}^{\pm}=v(x_{\ir}^{\pm},y),  v_{x,\jr}^{\pm}=v(x,y_{\jr}^{\pm}), v_{\ir,\jr}^{\pm,\pm}=v(x_{\jr}^{\pm},y_{\jr}^{\pm}),\\
%&\mean{v}_{\ir,y}^{\theta_1,y}=\theta_1v_{\ir,y}^-+(1-\theta_1)v_{\ir,y}^+,\quad
%\mean{v}_{x,\jr}^{x,\theta_2}=\theta_1v_{x,\jr}^-+(1-\theta_1)v_{x,\jr}^+,\qquad\qquad\\
%&\mean{v}_{\ir,\jr}^{\theta_1,\theta_2}=\theta_1\theta_2v_{\ir,\jr}^{-,-}+\theta_1(1-\theta_2)v_{\ir,\jr}^{-,+}
%+(1-\theta_1)\theta_2v_{\ir,\jr}^{+,-}+(1-\theta_1)(1-\theta_2)v_{\ir,\jr}^{+,+},
%\end{align*}
%where $v_{\ir,y}^{\pm}=v(x_{\ir}^{\pm},y)$, $v_{x,\jr}^{\pm}=v(x,y_{\jr}^{\pm})$ and $v_{\ir,\jr}^{\pm,\pm}=v(x_{\jr}^{\pm},y_{\jr}^{\pm})$.
For a given vector function
$\bm{v}=(v_1,v_2,v_3)^T\in H^{2}(\Omega)\times H^{1}(\Omega)\times H^{1}(\Omega)$,
we define the projection $\bm{\Pi}$:
\begin{align}
\bm{\Pi}
 \bm{v} = \big(\mathbb{G}_{\theta_1,\theta_2}v_1,\widetilde{\mathbb{G}}_{\theta_1,\frac{1}{2}} v_2,
 \widetilde{\mathbb{G}}_{\frac{1}{2},\theta_2} v_3 \big)^T\in [W_h^{k}]^3 \, .
\end{align}
$\bullet$ $\mathbb{G}_{\theta_1,\theta_2} v_1$ is the two-dimensional GGR projection of $v_1$,
defined as follows: for all $i\in Z_{N_x}$, $j\in Z_{N_y}$
\begin{align}
\int_{K_{i, j}}(\mathbb{G}_{\theta_1,\theta_2}v_1)\,r_hdxdy&=\int_{K_{i, j}}v_1r_h \, dxdy,
\qquad \forall \, r_h\in  \mathcal{Q}^{k-1}(K_{i, j}),\\
\int_{J_j}\mean{\mathbb{G}_{\theta_1,\theta_2}v_1}_{\ir,y}^{\theta_1,y}\,r_h \, dy
&\, =\int_{J_j}\mean{v_1}_{\ir, y}^{\theta_1,y}r_hdy, \quad \forall \, r_h\in  \mathcal{P}^{k-1}(J_j),\\
\int_{I_i}\mean{\mathbb{G}_{\theta_1,\theta_2}v_1}_{x,\jr}^{x,\theta_2}\,r_h \, dx
&\,=\int_{I_i}\mean{v_1}_{x,\jr}^{x,\theta_2}r_h \, dx, \quad \forall \, r_h\in  \mathcal{P}^{k-1}(I_i),\\
\mean{\mathbb{G}_{\theta_1,\theta_2}v_1}_{\ir,\jr}^{\theta_1,\theta_2}&=\mean{v_1}_{\ir,\jr}^{\theta_1,\theta_2}.
\end{align}
Here and below, we use the following notations:
\begin{align*}
&\mean{v}_{\ir, y}^{\theta_1,y} = \theta_1v_{\ir, y}^-+\widetilde{\theta_1}v_{\ir, y}^+,
\qquad\mean{v}_{x,\jr}^{x,\theta_2} = \theta_2v_{x, \jr}^-+\widetilde{\theta_2}v_{x, \jr}^+,\\
&\mean{v}_{\ir,\jr}^{\theta_1,\theta_2}=\theta_1\theta_2v_{\ir,\jr}^{-,-}+\theta_1\widetilde{\theta_2}v_{\ir,\jr}^{-,+}+\widetilde{\theta_1}\theta_2v_{\ir,\jr}^{+,-}+\widetilde{\theta_1}\widetilde{\theta_2}v_{\ir,\jr}^{+,+}.
\end{align*}
$\bullet$ $\widetilde{\mathbb{G}}_{\theta_1,\frac{1}{2}} v_2$ is defined in the following:
for all $i\in Z_{N_x}$, $j\in Z_{N_y}$
\begin{align}
& \int_{K_{i, j}} (\widetilde{\mathbb{G}}_{\theta_1,\frac{1}{2}}v_2) \, r_h \, dxdy = \int_{K_{i, j}}v_2 r_h \, dxdy,
\quad \forall \, r_h\in  \mathcal{P}^{k-1}(I_i)\otimes  \mathcal{P}^{k}(J_j),\\
&  \begin{aligned}
 \int_{J_j}\mean{\widetilde{\mathbb{G}}_{\theta_1,\frac{1}{2}}v_2}_{\ir,y}^{\tilde{\theta}_1,y}r_h \, dy & \, =
 \Big( \theta_1-\frac{1}{2} \Big) \int_{J_j}\big(b_1(v_1)_x\jump{v_1-\mathbb{G}_{\theta_1,\theta_2}v_1}\big)_{\ir, y}r_h \, dy \\
& \quad + \int_{J_j}\mean{v_2}_{\ir,y}^{\tilde{\theta}_1,y}r_h \, dy,  \quad \forall \, r_h\in  \mathcal{P}^{k}(J_j).
\end{aligned}
\end{align}
$\bullet$ $\widetilde{\mathbb{G}}_{\frac{1}{2},\theta_2} v_3$ is defined in the following:
for all $i\in Z_{N_x}$, $j\in Z_{N_y}$
\begin{align}
& \int_{K_{i, j}} (\widetilde{\mathbb{G}}_{\frac{1}{2},\theta_2}v_3) \, r_h \, dxdy = \int_{K_{i, j}}v_3 r_h \, dxdy,
\quad \forall \, r_h\in  \mathcal{P}^{k}(I_i)\otimes  \mathcal{P}^{k-1}(J_j),\\
& \begin{aligned}
\int_{I_i}\mean{\widetilde{\mathbb{G}}_{\frac{1}{2},\theta_2}v_3}_{x,\jr}^{x,\tilde{\theta}_2}r_h\, dx & \, =
\Big( \theta_2-\frac{1}{2} \Big)\int_{I_i}\big(b_2(v_1)_y\jump{v_1-\mathbb{G}_{\theta_1,\theta_2}v_1}\big)_{x,\jr}r_h \, dx \\
& \quad + \int_{I_i}\mean{v_3}_{x,\jr}^{x,\tilde{\theta}_2}r_h \, dx , \quad\forall \, r_h\in  \mathcal{P}^{k}(I_i).
\end{aligned}
\end{align}
It has been proved that the GGR projection $\mathbb{G}_{\theta_1,\theta_2}$ and projections $\widetilde{\mathbb{G}}_{\theta_1,\frac{1}{2}}$, $\widetilde{\mathbb{G}}_{\frac{1}{2},\theta_2}$ are well defined. Moreover, the approximation property was given in \cite[Lemma 4.1]{Yao2019NA}:
\begin{lem}
Let $\bm{v}=(v_1,v_2,v_3)\in (H^{s+1}(\Omega)\cap H^{2}(\Omega))\times H^{s+1}(\Omega)\times H^{s+1}(\Omega)$, $s\geq0$. For any $\theta_1> \frac{1}{2}$ and $\theta_2>\frac{1}{2}$, the projection $\bm{\Pi} \bm{v} $ is well defined, and
%\begin{align}
%\|\eta_{\bm{v}}^i\|_{L^2(\Omega)}+h^{\frac{1}{2}}\|\eta_{\bm{v}}^i\|_{\Gamma_h}+h\|\eta_{\bm{v}}^i\|_{H^{1}(\Omega)}\leq Ch^{\min(s,k)+1}(\|v_1\|_{H^{s+2}(\Omega)}+\|v_2\|_{H^{s+1}(\Omega)}+\|v_3\|_{H^{s+1}(\Omega)}),
%\end{align}
\begin{align}
\|\eta_{\bm{v}}^i\|+h^{\frac{1}{2}}\|\eta_{\bm{v}}^i\|_{\Gamma_h}\leq Ch^{\min(s,k)+1}(\|v_1\|_{H^{s+1}(\Omega)}+\|v_2\|_{H^{s+1}(\Omega)}+\|v_3\|_{H^{s+1}(\Omega)}),
\end{align}
where $i=1,2,3$, $\eta_{\bm{v}}^1=v_1-\mathbb{G}_{\theta_1,\theta_2} v_1$, $\eta_{\bm{v}}^2=v_2-\widetilde{\mathbb{G}}_{\theta_1,\frac{1}{2}} v_2$, $\eta_{\bm{v}}^3=v_3-\widetilde{\mathbb{G}}_{\frac{1}{2},\theta_2} v_3$ and $C$ is a constant independent of $h$.
\end{lem}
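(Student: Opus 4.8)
The plan is to reduce the estimate to the one-dimensional projection result of Section~\ref{sec_projection_1d} by exploiting the tensor-product structure of the mesh and of the three projections, and to view $\gr v_2$ and $\gl v_3$ as ``plain'' tensor-product projections plus a small correction driven by $\eta_{\bm v}^1$. Two ingredients carry the argument: (a) the one-dimensional generalized Gauss--Radau projection $\mathbb{G}_\theta$ with $\theta\neq\tfrac12$ is well defined on a periodic mesh, is $L^2$-stable uniformly in $h$, and satisfies $\|w-\mathbb{G}_\theta w\|+h^{1/2}\|w-\mathbb{G}_\theta w\|_{\Gamma_h}\le Ch^{\min(s,k)+1}\|w\|_{H^{s+1}}$ (this is the $i=1$ case of the lemma in Section~\ref{sec_projection_1d}; the case $\theta<\tfrac12$ follows by the reflection $x\mapsto-x$; see also \cite{MSW2016MC}); and (b) a ``pure-trace'' stability estimate: any $w_h\in W_h^k$ with vanishing moments against $\mathcal{P}^{k-1}(I_i)\otimes\mathcal{P}^{k}(J_j)$ on every cell and prescribed weighted traces $\mean{w_h}^{\tilde\theta_1,y}_{\ir,y}$ on the vertical interfaces obeys $\|w_h\|\le Ch^{1/2}\|\mean{w_h}^{\tilde\theta_1,y}\|_{\Gamma_h}$, with $C$ depending only on $1/|2\theta_1-1|$, and the analogue holds on horizontal interfaces with $\theta_2$. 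Ingredient (b) is proved by noting that on each cell $w_h(\cdot,y)$ is a scalar multiple (in $y$) of the degree-$k$ Legendre polynomial in $x$, so that, mode by mode in the transverse Legendre expansion, the interface conditions form a cyclic bidiagonal system which is strictly diagonally dominant for $\theta_1\neq\tfrac12$, hence boundedly invertible uniformly in $h$; Parseval in $y$ together with $\|L_k\|_{L^2(I_i)}\simeq h^{1/2}$ gives the claim.

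For $\eta_{\bm v}^1$ I would first identify $\g=\mathbb{G}^x_{\theta_1}\otimes\mathbb{G}^y_{\theta_2}$, the composition of the one-dimensional GGR projections in $x$ and in $y$; the four defining identities for $\g$ are precisely the tensorised internal-moment and weighted-trace conditions, so well-definedness follows cellwise from (a). For the error I would use the commuting split
\[
v_1-\g v_1=(v_1-\mathbb{G}^y_{\theta_2}v_1)+\mathbb{G}^y_{\theta_2}\big(v_1-\mathbb{G}^x_{\theta_1}v_1\big),
\]
estimate each summand by applying the one-dimensional bound fibrewise and integrating over the transverse variable, and invoke the uniform $L^2$-stability of $\mathbb{G}^y_{\theta_2}$ for the second summand; the $\|\cdot\|_{\Gamma_h}$ part is treated similarly, combining the one-dimensional trace estimate with a one-dimensional inverse trace inequality on the interior traces created by the outer projection. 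This delivers $\|\eta_{\bm v}^1\|+h^{1/2}\|\eta_{\bm v}^1\|_{\Gamma_h}\le Ch^{\min(s,k)+1}\|v_1\|_{H^{s+1}(\Omega)}$, and in particular $\|\eta_{\bm v}^1\|_{\Gamma_h}\le Ch^{\min(s,k)+1/2}\|v_1\|_{H^{s+1}(\Omega)}$.

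Next, for $\eta_{\bm v}^2$ I would split $\gr v_2=\mathbb{P}v_2+\rho$, where $\mathbb{P}v_2:=\mathbb{G}^x_{\tilde\theta_1}\otimes P_h^{y}v_2$ is the plain projection (one-dimensional GGR of degree $k$ in $x$ with parameter $\tilde\theta_1=1-\theta_1$, and $L^2$ projection onto degree $k$ in $y$), which satisfies $\|v_2-\mathbb{P}v_2\|+h^{1/2}\|v_2-\mathbb{P}v_2\|_{\Gamma_h}\le Ch^{\min(s,k)+1}\|v_2\|_{H^{s+1}}$ by the argument of the previous paragraph. Subtracting the defining relations, the correction $\rho\in W_h^k$ has vanishing moments against $\mathcal{P}^{k-1}(I_i)\otimes\mathcal{P}^{k}(J_j)$ and carries only the extra vertical-interface data $(\theta_1-\tfrac12)\big(b_1(v_1)(v_1)_x\jump{\eta_{\bm v}^1}\big)_{\ir,y}$. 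Ingredient (b) then yields
\[
\|\rho\|\le Ch^{1/2}\big\|b_1(v_1)(v_1)_x\jump{\eta_{\bm v}^1}\big\|_{\Gamma_h}\le Ch^{1/2}\|\eta_{\bm v}^1\|_{\Gamma_h}\le Ch^{\min(s,k)+1}\|v_1\|_{H^{s+1}(\Omega)},
\]
where the middle step uses that $b_1$ and the edge trace of $(v_1)_x$ are controlled; this is the only place the extra regularity $v_1\in H^2(\Omega)$ enters, and the bound for this edge coefficient follows the corresponding step of \cite{Yao2019NA} (trace theorem plus the two-dimensional embedding $H^2(\Omega)\hookrightarrow W^{1,p}(\Omega)$, $p<\infty$). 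Finally $h^{1/2}\|\rho\|_{\Gamma_h}\le Ch^{1/2}\cdot h^{-1/2}\|\rho\|\le Ch^{\min(s,k)+1}\|v_1\|_{H^{s+1}(\Omega)}$ by an inverse trace inequality, so the triangle inequality gives the estimate for $i=2$; the case $i=3$ is obtained verbatim after exchanging $x\leftrightarrow y$, $\theta_1\leftrightarrow\theta_2$, $b_1\leftrightarrow b_2$. Adding the three bounds, and noting that well-definedness of $\gr,\gl$ follows from the unique solvability of $\mathbb{P}$ and of the pure-trace problem in (b), completes the proof.

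The main obstacle is not any single estimate but the ordering and the sharpness it forces: the definitions of $\gr$ and $\gl$ feed $\jump{\eta_{\bm v}^1}$ into their trace conditions, so the optimal bound for $\eta_{\bm v}^1$ --- including its trace norm, which is only $O(h^{\min(s,k)+1/2})$, half a power short of the target --- must be established first, and the missing half power is recovered exclusively through the $h^{1/2}$ gain in the pure-trace stability of (b); that gain is available precisely because $\theta_1,\theta_2\neq\tfrac12$ makes the relevant interface coupling strictly diagonally dominant with $h$-uniformly bounded inverse. A secondary, technical point is the control of the edge factors $b_i(v_1)\,\partial v_1$, which is what dictates the extra $H^2(\Omega)$ hypothesis on the first component.
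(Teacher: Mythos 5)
There is nothing in the paper to compare your argument against: the authors do not prove this lemma, they quote it directly from \cite[Lemma 4.1]{Yao2019NA}. Your proposal is therefore judged on its own, and it is essentially a sound reconstruction of the standard route behind that reference: identify $\mathbb{G}_{\theta_1,\theta_2}$ with the tensor product $\mathbb{G}^x_{\theta_1}\otimes\mathbb{G}^y_{\theta_2}$ of one-dimensional GGR projections (well-posedness and optimal $L^2$ and trace estimates as in \cite{MSW2016MC}, with the $\theta<\tfrac12$ case by reflection); write $\widetilde{\mathbb{G}}_{\theta_1,\frac{1}{2}}v_2$ as the plain projection $\mathbb{G}^x_{1-\theta_1}\otimes P^y_h v_2$ plus a correction $\rho$ whose cell moments vanish and whose only data are the extra edge terms driven by $\jump{\eta_{\bm v}^1}$; and control $\rho$ through the cyclic bidiagonal interface system, which is uniformly invertible precisely because $\theta_1,\theta_2\neq\tfrac12$, yielding the $h^{1/2}$ gain that compensates the half-power loss in $\|\eta_{\bm v}^1\|_{\Gamma_h}$. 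Your counting of degrees of freedom, the identification of the plain part of the tilde projections, and the observation that $\eta_{\bm v}^1$ must be estimated first all match the structure of the definitions and the logic of the cited proof; the fibrewise use of the 1D estimates for the tensorized projection needs a short Fubini/anisotropic-regularity remark, but that is routine.

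The one genuinely soft step is the bound $\|b_1(v_1)(v_1)_x\jump{\eta_{\bm v}^1}\|_{\Gamma_h}\le C\|\eta_{\bm v}^1\|_{\Gamma_h}$. In two dimensions $v_1\in H^2(\Omega)$ does not give $(v_1)_x\in L^\infty(\Gamma_h)$, so this requires either letting the constant $C$ depend on the edge coefficient $b_1(v_1)(v_1)_x$ (which is how the cited reference effectively proceeds, and is harmless in the application since there $v_1=u$ is the smooth exact solution), or carrying out the H\"older patch you gesture at, which in turn needs an $L^4$ (or $L^\infty$) approximation estimate for $\eta_{\bm v}^1$ on the skeleton that your proposal does not actually establish. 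If you want the lemma exactly as stated, make that dependence explicit or supply the missing edge estimate; otherwise the argument is complete in outline.
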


\subsubsection{An optimal error estimate}
\label{sec_optimal_errest_2d}

In this subsection, we prove the additional damping term would not destroy the accuracy of the scheme.
Therefore, we only consider a simple case which $f_{i}(u) \geq 0, i=1,2$ in the governing equation \eqref{2d_model}, and also take the upwind-biased numerical flux for $f_1(u)$ and $f_2(u)$ to get the optimal error estimates. Other cases are similar and omit here. Now we present the main result in this subsection as follows.
\begin{thm}
Let $\bm{w}=(u,q_1,q_2)^{T}$ be the exact solution of the equation \eqref{eq:eqn2_1}-\eqref{eq:eqn2_3}.
Suppose $u(x,t)\in L^{\infty}\big((0,T); H^{k+2}(\Omega)\big)$, $u_t(x,t)\in L^{2}\big((0,T); H^{k+1}(\Omega)\big)$,
 $b_i(u),\,f_i(u)\in C^2$ and $f^\prime_i(u)\geq 0, i=1,2$. Let $\bm{w}_h=(u_h,q_{1h},q_{2h})^T$
 be the solution of the semi-discrete OFLDG scheme (\ref{scheme2.1})-(\ref{scheme2.3}) with
 numerical flux (\ref{2dflux1})-(\ref{2dflux4}) and
$$\hat{f_1}\big( (u_{h})_{\ir, y}^-, (u_{h})_{\ir, y}^+ \big) = \theta_1 f_1\big( (u_{h})_{\ir, y}^-\big)
+ (1-\theta_1)f_1\big( (u_{h})_{\ir, y}^+\big), \quad \theta_1>\frac{1}{2},$$
$$\hat{f_2}\big( (u_{h})_{x, \jr}^-, (u_{h})_{x, \jr}^+ \big) = \theta_2 f_2\big( (u_{h})_{x, \jr}^+\big)
+ (1-\theta_2)f_2\big( (u_{h})_{x, \jr}^+\big), \quad \theta_2>\frac{1}{2}.$$
The initial approximation is taken as $u_h(\cdot,\cdot,0)=P_h^k u(\cdot,\cdot,0)$,
$P_h^k$ is the standard local $L^2$ projection. Then we have the optimal error estimate
\begin{align}\label{error_est_2d}
\|u-u_h\|\leq Ch^{k+1}, \quad k\geq 2,
\end{align}
where $C>0$ is a constant independent of $h$.
\end{thm}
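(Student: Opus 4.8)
The plan is to run the energy argument of Theorem~\ref{errorest1d} line by line in two dimensions: quote the nonlinear-flux estimates from \cite{Yao2019NA} verbatim (the convective/diffusive bilinear forms of the OFLDG and LDG schemes coincide), and show that the added damping term is harmless exactly as in the one-dimensional proof and in \cite{LLS2021SINUM}. Using the projection $\bm\Pi$ of Section~\ref{sec_projection_2d}, write $e_u=u-u_h=\eta_u-\xi_u$ with $\eta_u=u-\g u$, $\xi_u=u_h-\g u$, and similarly $e_{q_1}=\eta_{q_1}-\xi_{q_1}$, $e_{q_2}=\eta_{q_2}-\xi_{q_2}$ with $\eta_{q_1}=q_1-\gr q_1$, $\eta_{q_2}=q_2-\gl q_2$. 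A smooth exact solution has vanishing vertex jumps of $\partial^{\bm{\alpha}}u$, so $\bm w$ satisfies \eqref{scheme2.1}-\eqref{scheme2.3} exactly; subtracting gives the error equations. Taking $v_h=\xi_u$, $r_h=\xi_{q_1}$, $p_h=\xi_{q_2}$, summing over all $K_{i,j}$, and using that the generalized-alternating-flux interface terms telescope as in the $L^2$-stability proof, we obtain
\begin{align*}
\frac12\frac{d}{dt}\|\xi_u\|^2+\|\xi_{q_1}\|^2+\|\xi_{q_2}\|^2
&=\sum_{i,j}\Big(((\eta_u)_t,\xi_u)_{K_{i,j}}+(\eta_{q_1},\xi_{q_1})_{K_{i,j}}+(\eta_{q_2},\xi_{q_2})_{K_{i,j}}\Big)\\
&\quad+\sum_{i,j}D_{ij}(u_h,\xi_u)-\mathcal N,
\end{align*}
where $\mathcal N$ gathers the differences $H^1_{ij}\big(h^1_u(\bm w)-h^1_u(\bm w_h),\xi_u\big)$, $H^2_{ij}\big(h^2_u(\bm w)-h^2_u(\bm w_h),\xi_u\big)$ and $G^m_{ij}\big(h^m_q(\bm w)-h^m_q(\bm w_h),\xi_{q_m}\big)$, $m=1,2$.

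Next I would estimate the terms that already occur in the LDG analysis. Since $\theta_1,\theta_2$ are fixed, $\g$ commutes with $\partial_t$, so by the two-dimensional approximation property quoted above $\|(\eta_u)_t\|=\|u_t-\g u_t\|\le Ch^{k+1}\|u_t\|_{H^{k+1}(\Omega)}$; with Cauchy--Schwarz and Young's inequality the three projection-error terms are bounded by $\tfrac14\|\xi_u\|^2+\tfrac14\big(\|\xi_{q_1}\|^2+\|\xi_{q_2}\|^2\big)+Ch^{2k+2}$. For $\mathcal N$, I invoke the a priori assumption \eqref{prioriassump} (valid for $k\ge2$ in two dimensions, see \cite{Zhang2004SINUM}), Taylor-expand $\bm h(\bm w)-\bm h(\bm w_h)$ using $f_i,b_i\in C^2$, and follow \cite[Lemmas 4.2--4.3]{Yao2019NA} with the upwind-biased convective fluxes and the matching projection $\bm\Pi$; this gives $-\mathcal N\le\tfrac14\big(\|\xi_{q_1}\|^2+\|\xi_{q_2}\|^2\big)+C\|\xi_u\|^2+Ch^{2k+2}$.

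The genuinely new estimate is the damping term. Writing $u_h-P_h^{\ell-1}u_h=(\xi_u-P_h^{\ell-1}\xi_u)+(\g u-P_h^{\ell-1}\g u)$ and using $L^2$-orthogonality of $P_h^{\ell-1}$,
\begin{align*}
D_{ij}(u_h,\xi_u)=-\sum_{\ell=0}^{k}\frac{\sigma^\ell_{K_{i,j}}(u_h)}{h_{K_{i,j}}}\Big(\|\xi_u-P_h^{\ell-1}\xi_u\|_{L^2(K_{i,j})}^2+\int_{K_{i,j}}(\g u-P_h^{\ell-1}\g u)\,\xi_u\,dxdy\Big),
\end{align*}
and the first, squared term is non-positive and dropped. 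For the remaining term, a triangle inequality through $u$ and the approximation properties of $\g$ and $P_h^{\ell-1}$ give a cellwise bound $\|\g u-P_h^{\ell-1}\g u\|_{L^2(K_{i,j})}\lesssim h^{\max(\ell,1)+\frac12}\|u\|_{H^{k+1}(\Omega)}$ of the same form as in the 1D proof; and since $u\in H^{k+2}(\Omega)\hookrightarrow C^{k}(\bar\Omega)$ in two dimensions, the exact derivatives are continuous, so $\jump{\partial^{\bm{\alpha}}u_h}\big|_{\bm v}=\jump{\partial^{\bm{\alpha}}\xi_u}\big|_{\bm v}-\jump{\partial^{\bm{\alpha}}\eta_u}\big|_{\bm v}$ and $\sigma^\ell_{K_{i,j}}(u_h)\lesssim h^{\ell}$ times the vertex jumps of $\partial^{\bm{\alpha}}\xi_u$ and of $\partial^{\bm{\alpha}}\eta_u$. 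Converting the vertex jumps of $\partial^{\bm{\alpha}}\xi_u$ into $\|\xi_u\|$ by inverse inequalities on $W_h^k$, bounding the $\eta_u$ contribution by the approximation property, and using Cauchy--Schwarz over the cells — precisely the steps leading to \eqref{est1.5}, carried out in \cite{LLS2021SINUM} — yields $\sum_{i,j}D_{ij}(u_h,\xi_u)\le C\|\xi_u\|^2+Ch^{2k+2}$.

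Combining the three estimates gives $\tfrac12\frac{d}{dt}\|\xi_u\|^2+\tfrac12\big(\|\xi_{q_1}\|^2+\|\xi_{q_2}\|^2\big)\le C\|\xi_u\|^2+Ch^{2k+2}$; since $u_h(\cdot,\cdot,0)=P_h^ku(\cdot,\cdot,0)$ one has $\|\xi_u(0)\|\le\|P_h^ku(0)-u(0)\|+\|u(0)-\g u(0)\|\le Ch^{k+1}$, so Gr\"onwall's inequality gives $\max_{t\in[0,T]}\|\xi_u(t)\|\le Ch^{k+1}$ and \eqref{error_est_2d} follows from the triangle inequality and the approximation property of $\g$; the a priori assumption \eqref{prioriassump} is then closed by the usual bootstrap (inverse inequality, $k\ge2$), legitimizing its use above. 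I expect the main obstacle to be the $h$-power bookkeeping in the damping estimate: one must verify that the vertex-based jump seminorm defining $\sigma^\ell_{K_{i,j}}$, after inverse inequalities on the tensor-product space $W_h^k$, scales so that its product with $h^{\max(\ell,1)+\frac12}/h_{K_{i,j}}$ and $\|\xi_u\|$ leaves no negative power of $h$ and, summed over cells, reproduces $C\|\xi_u\|^2+Ch^{2k+2}$ — the two-dimensional analogue of \eqref{est1.5}; a secondary technical point is keeping the generalized-alternating fluxes ($\theta_1,\theta_2>\tfrac12$) consistent with $\bm\Pi$ so that all interface terms are absorbed into $\mathcal N$ as in \cite{Yao2019NA}.
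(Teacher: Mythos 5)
Your proposal follows the paper's proof essentially verbatim: the same error decomposition through the projection $\bm{\Pi}$, the same appeal to the nonlinear-flux lemmas of \cite{Yao2019NA} under the a priori assumption \eqref{prioriassump}, the same treatment of the damping term (drop the non-positive squared part, bound $\mathbb{G}_{\theta_1,\theta_2}u-P_h^{\ell-1}(\mathbb{G}_{\theta_1,\theta_2}u)$ and $\sigma^\ell_{K_{i,j}}(u_h)$ separately via the vertex jumps of $\xi_u$ and $\eta_u$ plus inverse estimates), and the same Gr\"onwall/triangle-inequality conclusion. The one slip is exactly the bookkeeping point you flag yourself: in two dimensions the cell measure is $O(h^2)$, so the correct cellwise bound is $\|\mathbb{G}_{\theta_1,\theta_2}u-P_h^{\ell-1}(\mathbb{G}_{\theta_1,\theta_2}u)\|_{L^2(K_{i,j})}\le Ch^{\max(\ell,1)+1}\|u\|_{H^{k+2}(\Omega)}$ (this is where the $H^{k+2}$ regularity enters, through the embedding into $W^{\max(\ell,1),\infty}$), not the one-dimensional exponent $\max(\ell,1)+\tfrac12$ you quote; with $+\tfrac12$ the final Cauchy--Schwarz over cells leaves a factor $h^{-1/2}$ in front of $\|\xi_u\|^2$, so Gr\"onwall no longer yields an $h$-independent constant, whereas with $+1$ one obtains the paper's estimate $D(u_h,\xi_u)\le C\|\xi_u\|^2+Ch^{2k+2}$ and the argument closes as you describe.
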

\begin{proof}
Similar to one-dimensional case, we also rewrite the error into two parts with the help of the projection $\bm{\Pi} $,
\begin{align*}
&e_u=u-u_h=\eta_u-\xi_u, \quad\quad\eta_u=u-\g\,u, \quad\xi_u=u_h-\g\,u; \\
&e_{q_1}=q_1-q_{1h}=\eta_{q_1}-\xi_{q_1},~~\eta_{q_1}=q_1-\gr \,q_1, ~~\xi_{q_1}=q_{1h}-\gr \,q_1;\\
&e_{q_2}=q_2-q_{2h}=\eta_{q_2}-\xi_{q_2},~~\eta_{q_2}=q_2-\gl \,q_2, ~~\xi_{q_2}=q_{2h}-\gl \,q_2.
\end{align*}
Since the exact solution $\bm{w}=(u,q_1,q_2)^T$ also satisfies the OFLDG scheme(\ref{scheme2.1})-(\ref{scheme2.3}).
Then, for all $v_h, r_h, p_h\in W_h^k$ we have the following error equations
\begin{align}
\int_{K_{i, j}}(e_u)_t v_h \, dxdy= &~ H_{ij}^1\big( h_u^1(\bm{w})-h_u^1(\bm{w}_h), v_h\big)
+ H_{ij}^2\big(h_u^2(\bm{w})-h_u^2(\bm{w}_h), v_h\big)\label{ereq1_2d}\\
&-D_{ij}(u_h,v_h),\nonumber\\
\int_{K_{i, j}} e_{q_1} r_h \, dxdy=&~G_{ij}^1\big(h_q^1(\bm{w})-h_q^1(\bm{w}_h), r_h\big),\label{ereq2_2d}\\
\int_{K_{i, j}} e_{q_2} p_h \, dxdy=&~G_{ij}^2\big( h_q^2(\bm{w})-h_q^2(\bm{w}_h), p_h\big).\label{ereq3_2d}
\end{align}
Taking $v_h=\xi_u$, $r_h = \xi_{q_1}$,  $p_h=\xi_{q_2}$ in \eqref{ereq1_2d}, \eqref{ereq2_2d}
and \eqref{ereq3_2d} respectively, we have
\begin{align*}
\int_{K_{i, j}} (\xi_u)_t \xi_u \, dxdy=&\int_{K_{i, j}} (\eta_u)_t \xi_udxdy-H_{ij}^1\big( h_u^1(\bm{w}) - h_u^1(\bm{w}_h), \xi_u\big)  \\
& - H_{ij}^2\big(h_u^2(\bm{w})- h_u^2(\bm{w}_h), \xi_u\big) + D_{ij}(u_h,\xi_u),\\
\int_{K_{i, j}} \xi_{q_1} \xi_{q_1} \, dxdy=&~\int_{K_{i, j}} \eta_{q_1}\, \xi_{q_1}dxdy-G_{ij}^1\big(h_q^1(\bm{w})-h_q^1(\bm{w}_h), \xi_{q_1}\big),\\
\int_{K_{i, j}} \xi_{q_2}  \xi_{q_2} \, dxdy=&~\int_{K_{i, j}} \eta_{q_2}\, \xi_{q_2}dxdy-G_{ij}^2\big( h_q^2(\bm{w})-h_q^2(\bm{w}_h), \xi_{q_2}\big).
\end{align*}
Summing it over $i, j$, we can obtain
\begin{equation}
\begin{split}\label{eq2d}
&\frac 12\frac{d}{dt}\|\xi_u\|^2+\|\xi_{q_1}\|^2+\|\xi_{q_2}\|^2 \\
 =& ((\eta_u)_t,\xi_u)+(\eta_{q_1}, \xi_{q_1})+(\eta_{q_2}, \xi_{q_2})+D(u_h,\xi_u) \\
& - \Big(H^1\big( h_u^1(\bm{w}) - h_u^1(\bm{w}_h), \xi_u\big) + H^2\big(h_u^2(\bm{w}) - h_u^2(\bm{w}_h), \xi_u\big)\Big) \\
&- \Big(G^1\big(h_q^1(\bm{w}) - h_q^1(\bm{w}_h), \xi_{q_1}\big) +
G^2\big( h_q^2(\bm{w})-h_q^2(\bm{w}_h),  \xi_{q_2}\big)\Big),
\end{split}
\end{equation}
where $(\cdot,\cdot)$ in two dimensions denotes $\displaystyle(r,v)=\sum_{i,j}\int_{K_{i, j}} rv \, dxdy,$
for all $r, v\in L^2(\Omega)$ and
\[D(\cdot,\cdot)=\sum_{i,j}D_{ij}(\cdot,\cdot),\, H^m(\cdot,\cdot)=\sum_{i,j}H_{ij}^m(\cdot,\cdot),\, G^m(\cdot,\cdot)
= \sum_{i,j}G_{ij}^m(\cdot,\cdot),\, m=1,2.\] Firstly, we have
\begin{align}
((\eta_u)_t,\xi_u)+(\eta_{q_1}, \xi_{q_1})+(\eta_{q_2}, \xi_{q_2})\leq \frac14\|\xi_{q_1}\|^2
+\frac14\|\xi_{q_2}\|^2+\frac14\|\xi_u\|^2+Ch^{2k+2}.\label{est1_2d}
\end{align}
%{\color{red}
With the help of the a priori assumption \eqref{prioriassump}, we could get the following estimates
 for the last two terms in \eqref{eq2d} as in \cite[Lemma 4.3, Lemma 4.4]{Yao2019NA}. %}
\begin{align}
&-\Big(H^1\big( h_u^1(\bm{w})-h_u^1(\bm{w}_h), \xi_u\big)
+ H^2\big(h_u^2(\bm{w})-h_u^2(\bm{w}_h), \xi_u\big)\Big)\leq C\|\xi_u\|^2+Ch^{2k+2},\label{est2_2d} \\
& \begin{aligned} \label{est3_2d}
&-\Big(G^1\big(h_q^1(\bm{w}) - h_q^1(\bm{w}_h), \xi_{q_1}\big) +
G^2\big( h_q^2(\bm{w})-h_q^2(\bm{w}_h),  \xi_{q_2}\big)\Big) \leq \frac14\|\xi_{q_1}\|^2\! +\frac14\|\xi_{q_2}\|^2\\
& \hspace{10.5cm} +C\|\xi_u\|^2+Ch^{2k+2}.
\end{aligned}
\end{align}
Then, we estimate the damping term $D_{ij}(u_h,\xi_u)$.
\begin{align*}
D_{ij}(u_h,\xi_u)&= -\sum_{\ell=0}^{k}\frac{\sigma_{K_{i, j}}^\ell(u_h)}{h_{K_{i, j}}}
\int_{K_{i, j}} \big( u_h-P_h^{\ell-1}u_h \big)\xi_u \, dxdy \\
&=-\sum_{\ell=0}^{k}\frac{\sigma_{K_{i, j}}^\ell(u_h)}{h_{K_{i, j}}}
\int_{K_{i, j}}\big(\xi_u-P_h^{\ell-1}\xi_u\big)^2
+ \big(\mathbb{G}_{\theta_1,\theta_2}\,u-P_h^{\ell-1}(\mathbb{G}_{\theta_1,\theta_2}\,u)\big) \xi_u\, dxdy  \\
&\leq -\sum_{\ell=0}^{k}\frac{\sigma_{K_{i, j}}^\ell(u_h)}{h_{K_{i, j}}}
\int_{K_{i, j}}\big(\mathbb{G}_{\theta_1,\theta_2}\,u-P_h^{\ell-1}(\mathbb{G}_{\theta_1,\theta_2}\,u)\big) \xi_u\, dxdy\\
&\leq \sum_{\ell=0}^{k}\frac{\sigma_{K_{i, j}}^\ell(u_h)}{h_{K_{i, j}}}
\big\|\mathbb{G}_{\theta_1,\theta_2}\,u-P_h^{\ell-1}(\mathbb{G}_{\theta_1,\theta_2}\,u) \big\|_{L^2({K_{i, j}})}
\|\xi_u\|_{L^2({K_{i, j}})}.
\end{align*}
Similar to the one-dimensional case, we need to estimate $\big\|\mathbb{G}_{\theta_1,\theta_2}\,u-P_h^{\ell-1}(\mathbb{G}_{\theta_1,\theta_2}\,u) \big\|_{L^2({K_{i, j}})}$ and $\sigma_j^\ell(u_h)$.
Thanks to the property of projections $P_h^{\ell-1}$ and $\mathbb{G}_{\theta_1,\theta_2}$, we get
\begin{align*}%\label{L2proest}
& \|\mathbb{G}_{\theta_1,\theta_2}\,u-P_h^{\ell-1}(\mathbb{G}_{\theta_1,\theta_2}\,u)\|_{L^2({K_{i, j}})} \\
\leq & ~2\|\mathbb{G}_{\theta_1,\theta_2}\,u-u\|_{L^2({K_{i, j}})}+\|u-P_h^{\ell-1}u\|_{L^2({K_{i, j}})}\\
\leq & ~ Ch^{k+1}\|u\|_{H^{k+1}(\Omega)}+ h^{\max(1,\ell)+1}\|u\|_{W^{\max(1,\ell),\infty}(\Omega)}\\
\leq & ~Ch^{\max(1,\ell)+1}\|u\|_{H^{k+2}(\Omega)}.
\end{align*}
For the coefficients $\sigma_{K_{i, j}}^\ell(u_h)$, we have
\begin{align*}
\sum_{j=1}^{N_y}\sum_{i=1}^{N_x} \big( \sigma_{K_{i, j}}^\ell(u_h) \big)^2
&=\sum_{j=1}^{N_y}\sum_{i=1}^{N_x}\frac{4(2\ell+1)^2}{(2k-1)^2}\frac{h^{2\ell}}{(\ell!)^2}\sum_{|\bm{\alpha}|
=\ell}\Big(\frac{1}{N_e}\sum_{\bm{v}\in K_{i, j}}\big(\jump{\partial^{\bm{\alpha}}u_h
- \partial^{\bm{\alpha}}u}\Big|_{\bm{v}}\big)^2\Big) \\
&\leq C \sum_{j=1}^{N_y}\sum_{i=1}^{N_x}\sum_{|\bm{\alpha}|=\ell}\frac{h^{2\ell}}{N_e}\sum_{\bm{v}\in K_{i, j}}\Big(\jump{\partial^{\bm{\alpha}}\xi_u}^2\Big|_{\bm{v}}+\jump{\partial^{\bm{\alpha}}\eta_u}^2\Big|_{\bm{v}}\Big) \\
&\leq Ch^{-2}\|\xi_u\|^2+Ch^{2k} \, .
\end{align*}
Then, by the Cauchy-Schwarze inequality, we have
\begin{align} \label{damping2dest}
\begin{aligned}
D(u_h,\xi_u) = \sum_{i, j} D_{ij}(u_h, \xi_u) &\leq Ch\bigg(\sum_{j=1}^{N_y}\sum_{i=1}^{N_x}
\sum_{\ell=0}^{k}\big(\sigma_{K_{i, j}}^\ell(u_h)\big)^2\bigg)^{\frac 12}\|\xi_u\|\\
&\leq C\|\xi_u\|^2+Ch^{2k+2}.
\end{aligned}
\end{align}
Thus, combining \eqref{est1_2d}-\eqref{damping2dest}, we can obtain
\begin{align*}
\frac 12\frac{d}{dt}\|\xi_u\|^2+\|\xi_{q_1}\|^2+\|\xi_{q_2}\|^2\leq Ch^{2k+2}+C\|\xi_u\|^2
+\frac{1}{2}\|\xi_{q_1}\|^2+\frac{1}{2}\|\xi_{q_2}\|^2.
\end{align*}
After applying the Gronwall's inequality, we have
$\displaystyle\|\xi_u\|\leq Ch^{k+1}$. Finally, combining with the triangle inequality,
we obtain the optimal error estimate \eqref{error_est_2d}.
\end{proof}

\section{Numerical tests}
\label{sec_numeric}

In this section, we test some numerical examples to demonstrate the good performances
of the proposed scheme. We consider the one- and two-dimensional nonlinear degenerate parabolic equations.
Some strongly degenerate parabolic equations are also considered.
In all numerical tests, the time discretization employs the classic third order TVD Runge-Kutta method \cite{SO1988JCP}.
The space is uniformly divided in each direction.
Without special statement, the time step for one-dimensional problems \eqref{1d_model} is taken as
\begin{align}
\Delta t=\frac{CFL}{b/h^2+c/h},
\end{align}
with $b=\max_{u}|a(u)|$, $c=\max_{u}|f'(u)|$ and $CFL=0.1$. For two-dimensional problems \eqref{2d_model}, we take
\begin{align}
\Delta t=\frac{CFL}{b_x/h_x^2+b_y/h_y^2 +c_x/h_x+c_y/h_y},
\end{align}
with $b_x=\max_{u}|a_1(u)|$, $b_y=\max_{u}|a_2(u)|$, $c_x=\max_u|f_1'(u)|$, $c_y=\max_u|f_2'(u)|$
and $CFL=0.1$. We employ the piecewise $\mathcal{P}^2$ polynomial space to simulate all numerical tests
unless otherwise specified. The cell averages are plotted to show the numerical solutions in our test.
We also emphasize that no limiter is used in all simulations here.

%\subsection{One dimensional Problems}
%\label{sec_numeric_1d}

\begin{exmp}\label{examp1}
The first example is the Barenblatt solution of the porous medium equation (PME), namely,
\begin{align}\label{PME_1d}
u_t=(u^m)_{xx}, \quad x \in \mathbb{R},\quad t>0,
\end{align}
where $m$ is a constant greater than one. The Barenblatt solution of PME \eqref{PME_1d} is defined by
\begin{align}\label{eq:Barenblatt}
B_{m}(x,t)=t^{-p}\left[\left(1-\frac{p(m-1)}{2m}\frac{|x|^2}{t^{2p}}\right)_{+}\right]^{1/(m-1)},
\end{align}
where $u_{+}= \max\{u,0\}$ and $p=(m+1)^{-1}$. The solution has a compact support $[-\alpha_{m}(t),\alpha_m(t)]$ with
\begin{align*}
\alpha_{m}(t)=t^p\sqrt{\frac{2m}{p(m-1)}},
\end{align*}
and the interface $|x|=\alpha_m(t)$ moving outward in a finite speed. For this problem,
\begin{align}
b(u)=\sqrt{mu^{m-1}}, \quad g(u)=\frac{2 u \sqrt{m u^{m-1}}}{1+m}.
\end{align}

\noindent
We take the initial solution as the Barenblatt solution at $t=1$. Consider the domain $I=[-6,6]$
with the boundary condition $u(\pm 6,t)=0$ for $t\geq1$. The numerical solution is obtained at $t=2$.
\end{exmp}
We plot the numerical solutions with $N=320$ grid points for $m=2,3,5$ and $8$ in Figure \ref{fig1d_PME}, respectively. We can clearly observe that the numerical solutions accurately capture the interface $|x|=\alpha_m(t)$ without noticeable oscillations.
%It can be observed that the schemes are able to approximate the Barenblatt solution accurately without noticeable oscillations around the interface $|x|=\alpha_m(t)$.

\begin{figure}[htb]
\centering
\caption{ \label{fig1d_PME} Example \ref{examp1}: Barenblatt solution for the PME with  grid points $N=320$.}
\subfigure[$m=2$]{
\includegraphics[width=0.45\textwidth]{./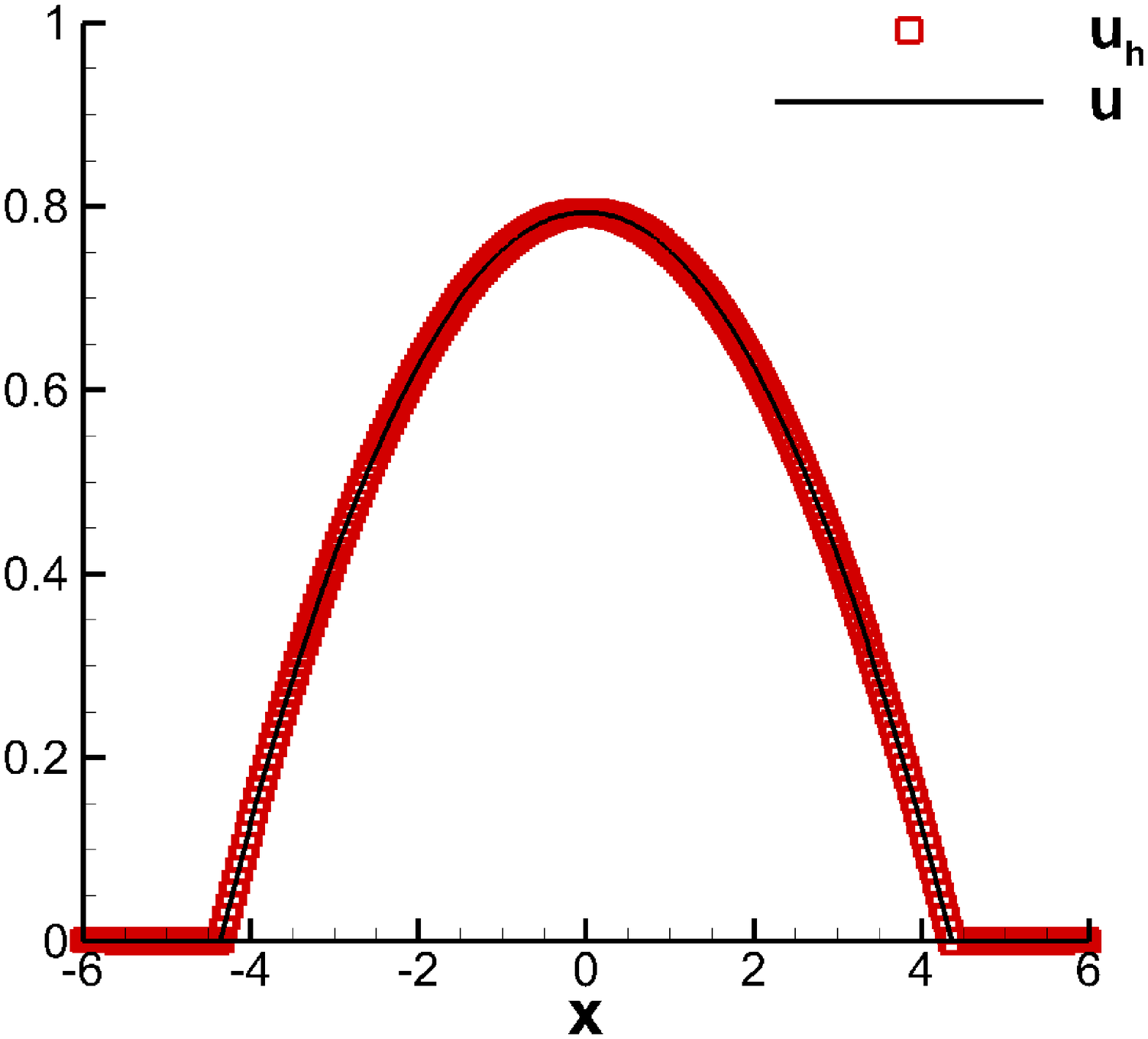} }
\subfigure[$m=3$]{
\includegraphics[width=0.45\textwidth]{./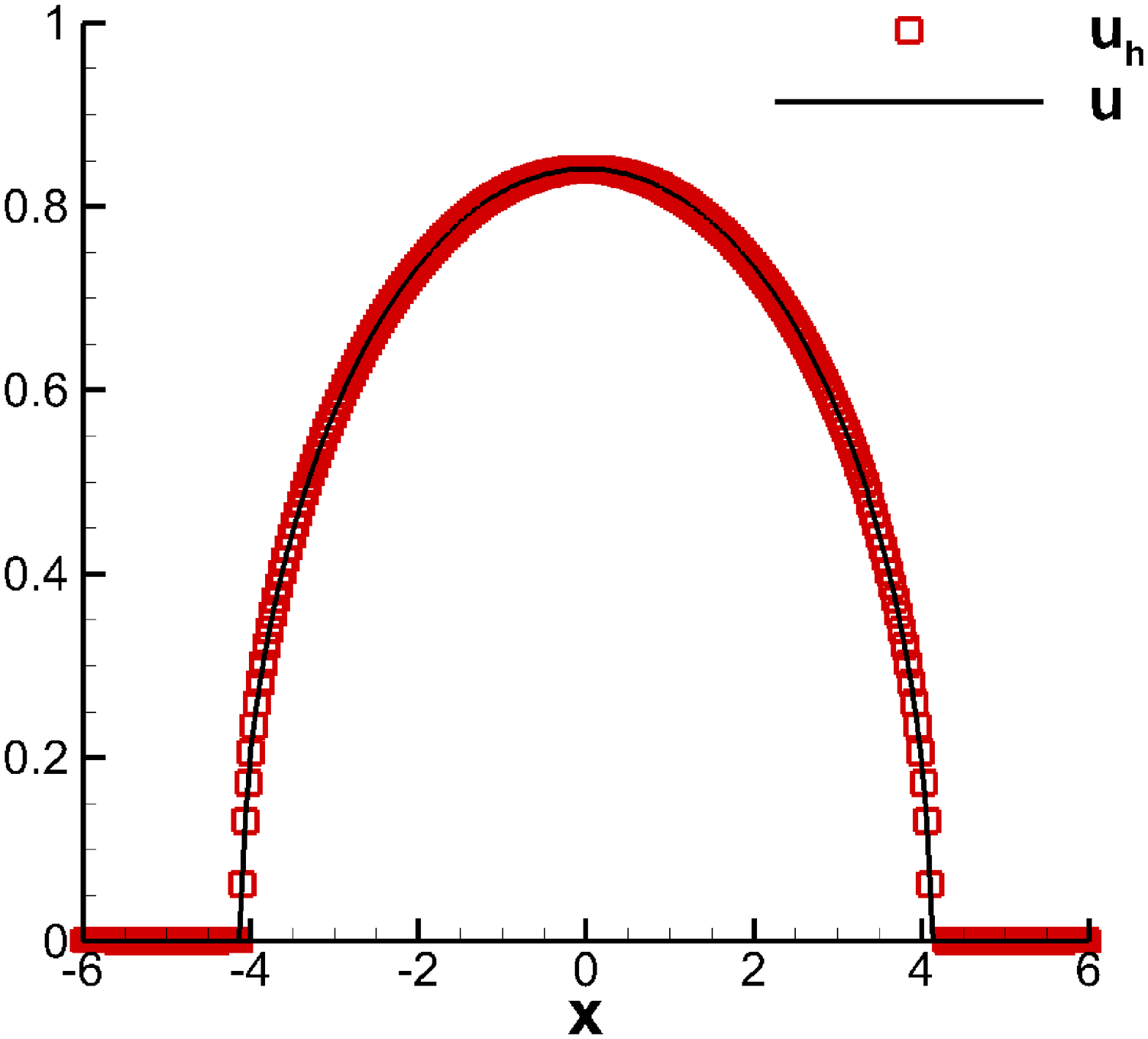} }
\subfigure[$m=5$]{
\includegraphics[width=0.45\textwidth]{./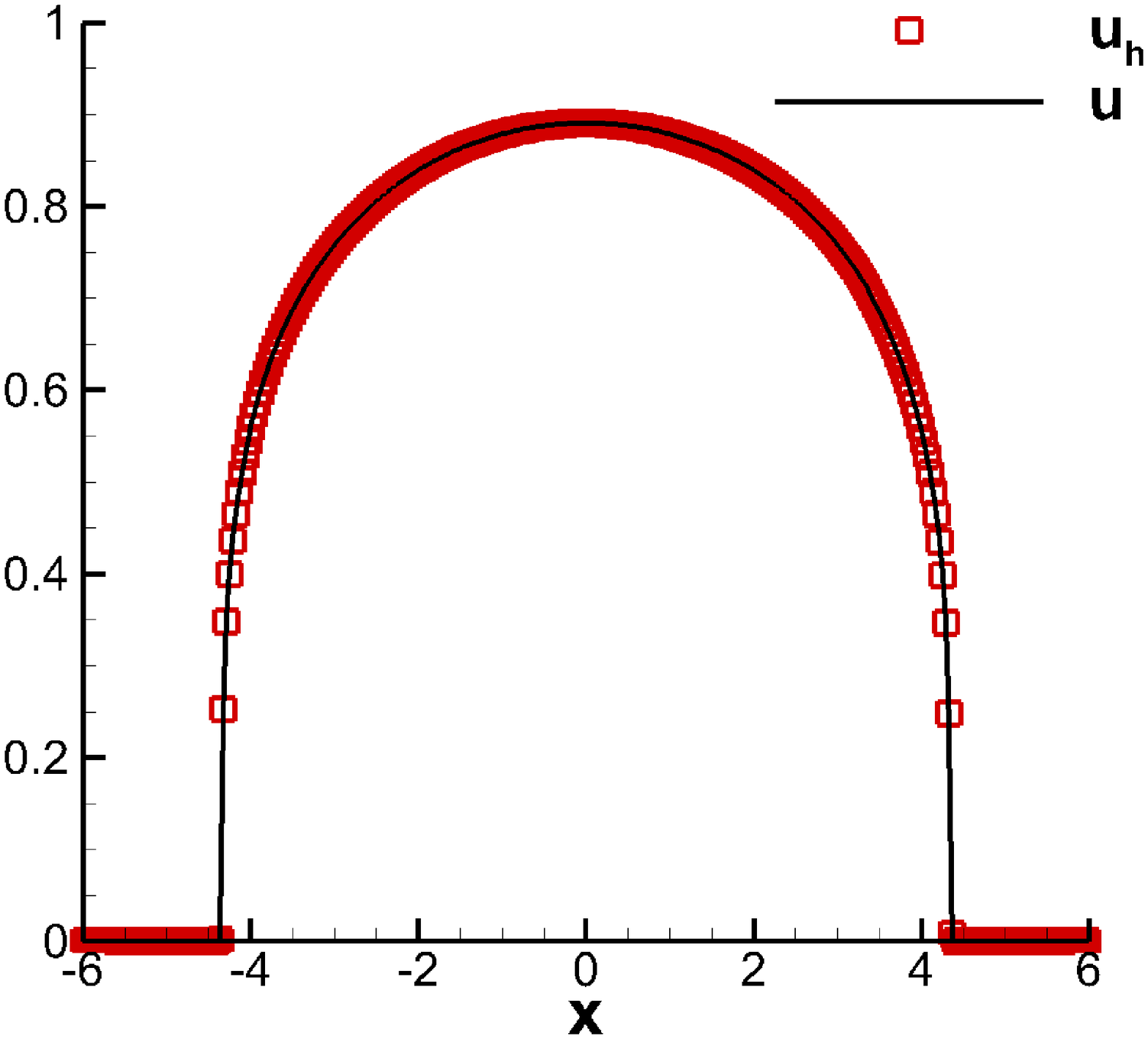} }
\subfigure[$m=8$]{
\includegraphics[width=0.45\textwidth]{./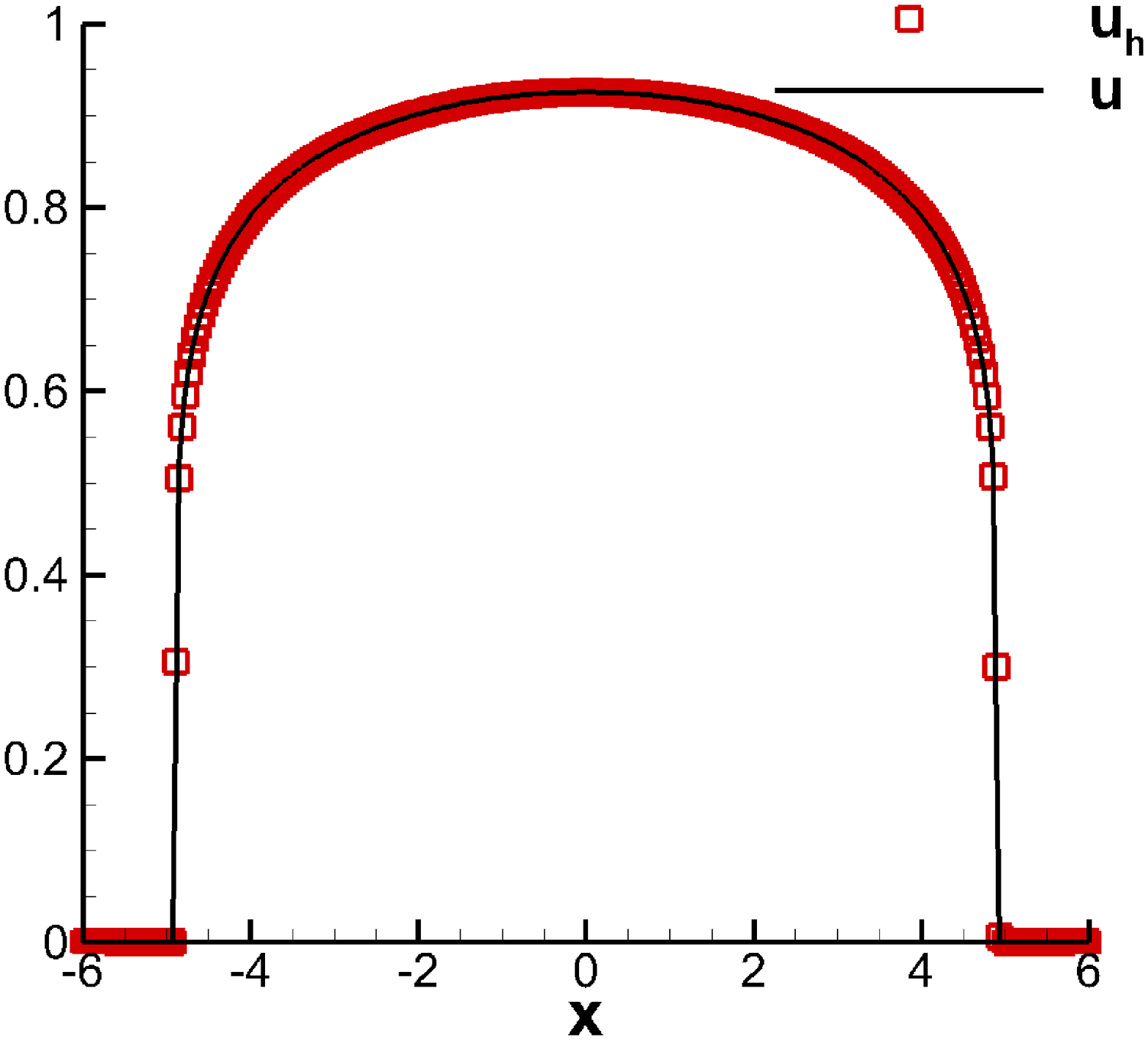} }
\end{figure}

We also test the accuracy in the smooth part of the solution. We compute the error for the Barenblatt solution \eqref{eq:Barenblatt} of the PME with $m=8$ on domain $[-1.5,1.5]$ which is the smooth part of the solution and our final time is $t=1.05$. From Table \ref{tab1}, the optimal order of error is observed for this problem.

%The accuracy for
%the Barenblatt solution \eqref{eq:Barenblatt} of PME with $m=8$ is given in Table \ref{tab1},
%where the considered domain is $[-1.5,1.5]$ and final time is $t=1.05$. We can observe the optimal order of error for the smooth solution. It confirms our theoretical findings.

\begin{table}[htb]\small
\caption{\label{tab1} Example \ref{examp1}: The errors and orders of $u_h$ for the smooth part of the Barenblatt solution.} \centering
\medskip
\begin{tabular}{|c|r||cc|cc|cc|}  \hline
 & $N$ & $L^1$ error & order & $L^2$ error & order & $L^\infty$ error & order \\ \hline \hline
 \multirow{5}{0.6cm}{$\mathcal{P}^1$}
     &   40    &  2.001E-04    &  --    &  1.551E-04    &  --    &  2.206E-04    &  --   \\
    &   80    &  4.697E-05    &   2.091    &  3.726E-05    &   2.058    &  5.252E-05    &   2.070   \\
    &  160    &  1.139E-05    &   2.044    &  9.139E-06    &   2.027    &  1.284E-05    &   2.032   \\
    &  320    &  2.805E-06    &   2.022    &  2.264E-06    &   2.013    &  3.177E-06    &   2.015   \\
    &  640    &  6.958E-07    &   2.011    &  5.633E-07    &   2.007    &  7.901E-07    &   2.007   \\\hline
 \multirow{5}{0.6cm}{$\mathcal{P}^2$}
     &   40    &  1.256E-06    &  --    &  1.027E-06    &  --    &  1.748E-06    &  --   \\
    &   80    &  1.388E-07    &   3.177    &  1.149E-07    &   3.160    &  2.018E-07    &   3.115   \\
    &  160    &  1.633E-08    &   3.087    &  1.363E-08    &   3.076    &  2.424E-08    &   3.057   \\
    &  320    &  1.981E-09    &   3.043    &  1.661E-09    &   3.037    &  2.968E-09    &   3.030   \\
    &  640    &  2.440E-10    &   3.022    &  2.050E-10    &   3.018    &  3.667E-10    &   3.017   \\\hline
 \multirow{4}{0.6cm}{$\mathcal{P}^3$}
        &   40    &  3.463E-08    &  --    &  2.527E-08    &  --    &  5.813E-08    &  --   \\
    &   80    &  1.785E-09    &   4.278    &  1.085E-09    &   4.541    &  1.495E-09    &   5.281   \\
    &  160    &  1.063E-10    &   4.069    &  6.436E-11    &   4.076    &  8.226E-11    &   4.184   \\
    &  320    &  6.499E-12    &   4.032    &  3.940E-12    &   4.030    &  4.852E-12    &   4.084   \\\hline
%    &  640    &  4.202E-13    &   3.951    &  2.915E-13    &   3.757    &  6.941E-13    &   2.805   \\
\end{tabular}
\end{table}

\begin{exmp}
\label{examp2}
Next, we consider the interaction of tow boxes for the PME \eqref{PME_1d}. We take the initial data as
\begin{align}
u(x,0)=\left\{\begin{array}{l}
1, \quad x\in (-4,-1),\\
1.5, \quad x\in (0,3),\\
0,\quad \text{otherwise},
\end{array}\right.
\end{align}
and the computational domain $I=[-6,6]$ with boundary condition $u(\pm 6,t)=0$. The uniform mesh with $N=240$ cells
is used to compute until the terminal time $t=1.0$. The parameter $m=8$.
\end{exmp}
We show the evolution of the numerical solution at different time in Figure \ref{fig_1d_PME_two_box}. From the results,
we can observe that the numerical solutions don't appear noticeable oscillation around the interface and agree very well with the reference solution in \cite{Jiang2021JSC, Liu2011SISC}.

\begin{figure}[p]
\centering
\caption{ \label{fig_1d_PME_two_box} Example \ref{examp2}: Interaction of tow boxes for the PME with grid points $N=240$.}
\subfigure[$t=0.$]{
\includegraphics[width=0.45\textwidth,height=0.15\textheight]{./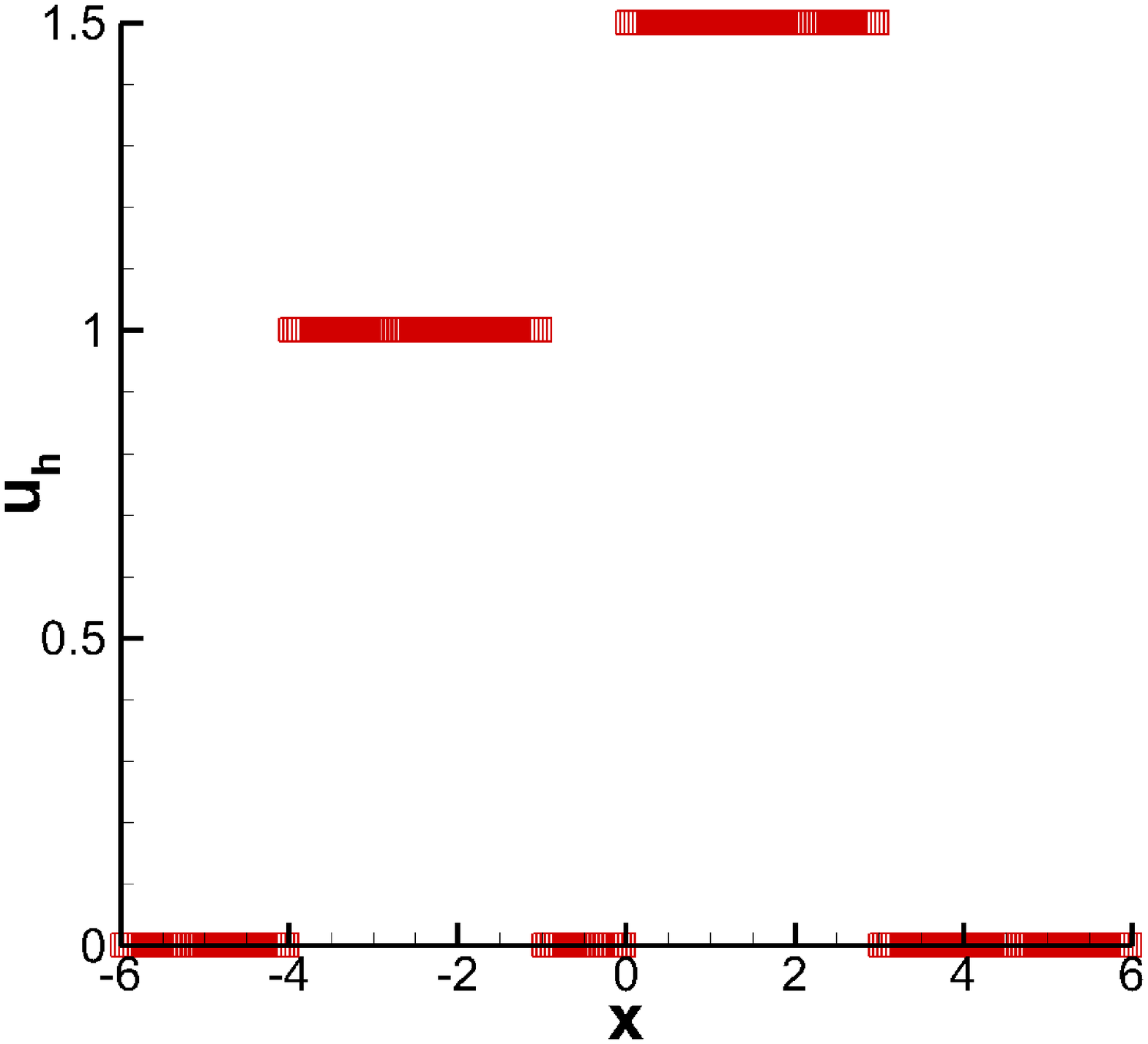} }
\subfigure[$t=0.05$]{
\includegraphics[width=0.45\textwidth,height=0.15\textheight]{./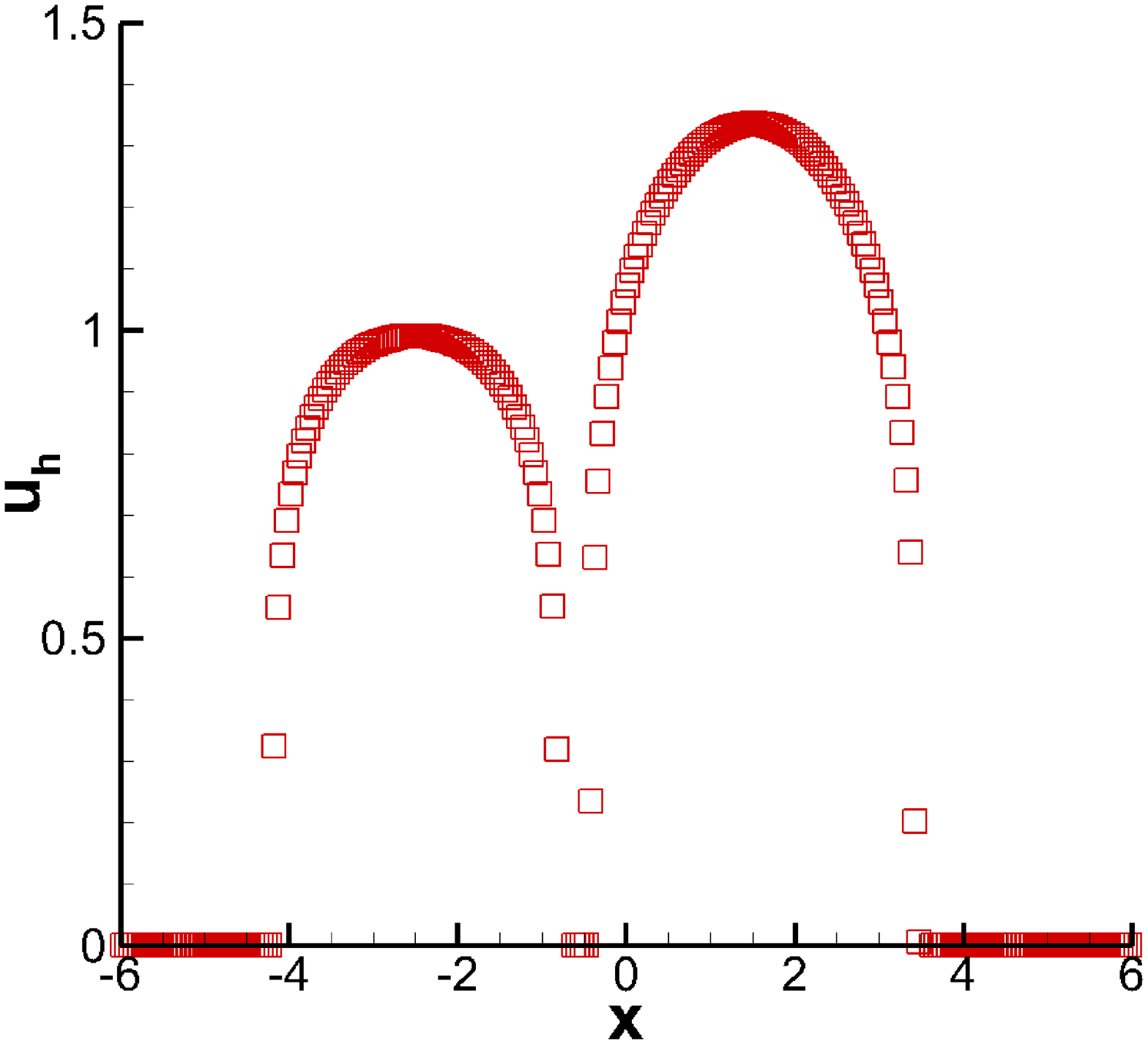} }
\subfigure[$t=0.08$]{
\includegraphics[width=0.45\textwidth,height=0.15\textheight]{./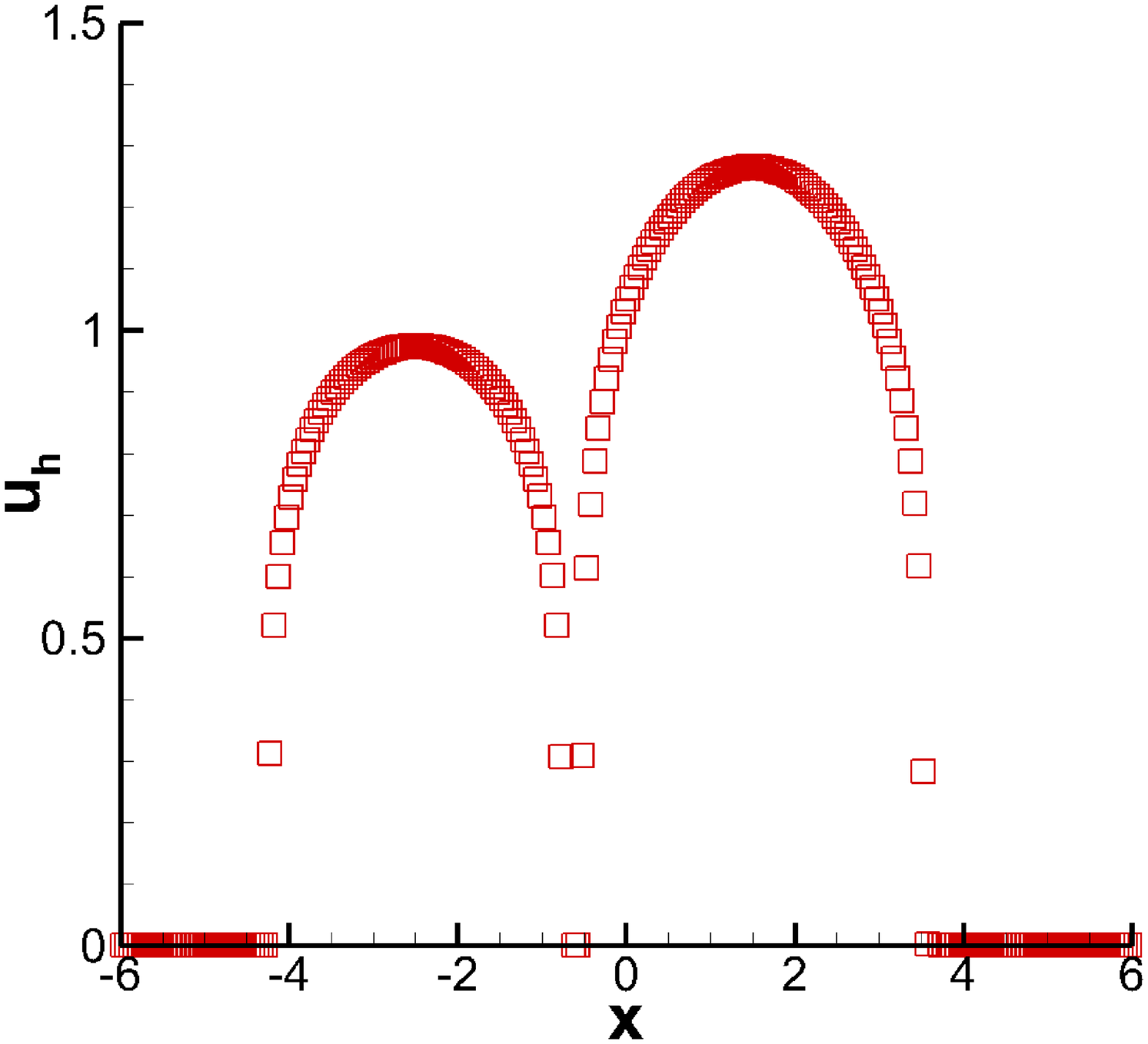} }
\subfigure[$t=0.11$]{
\includegraphics[width=0.45\textwidth,height=0.15\textheight]{./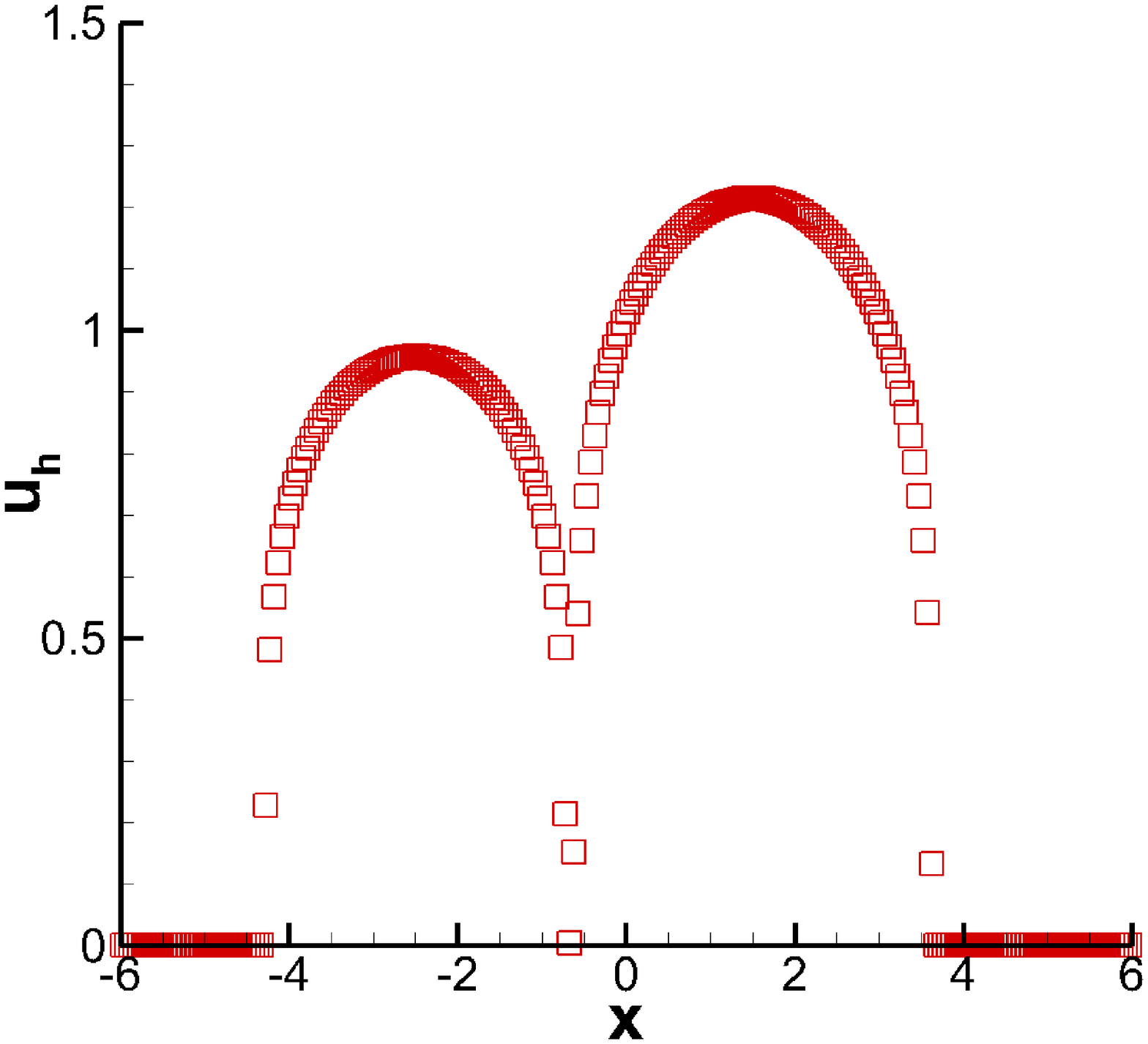} }
\subfigure[$t=0.14$]{
\includegraphics[width=0.45\textwidth,height=0.15\textheight]{./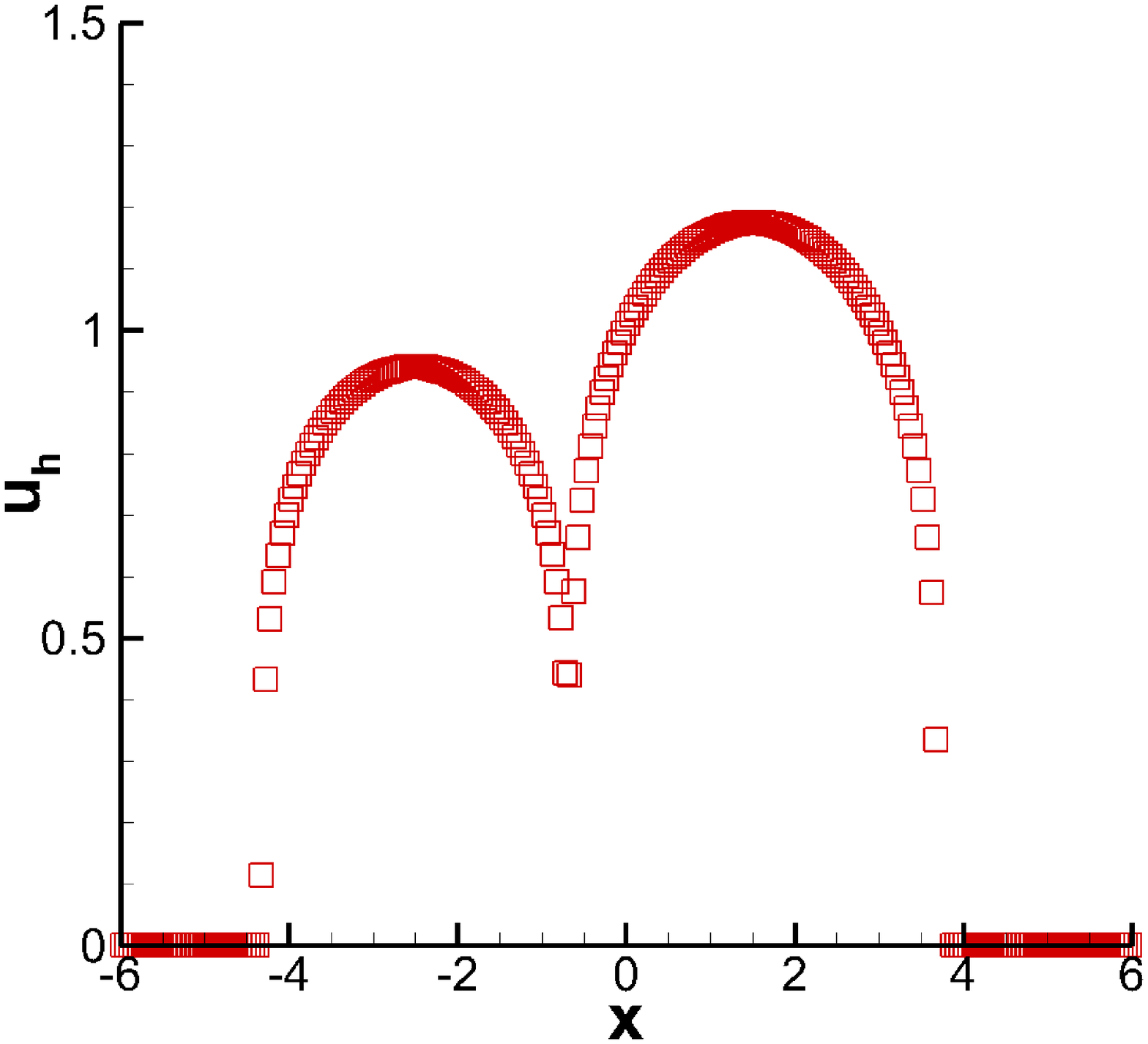} }
\subfigure[$t=0.17$]{
\includegraphics[width=0.45\textwidth,height=0.15\textheight]{./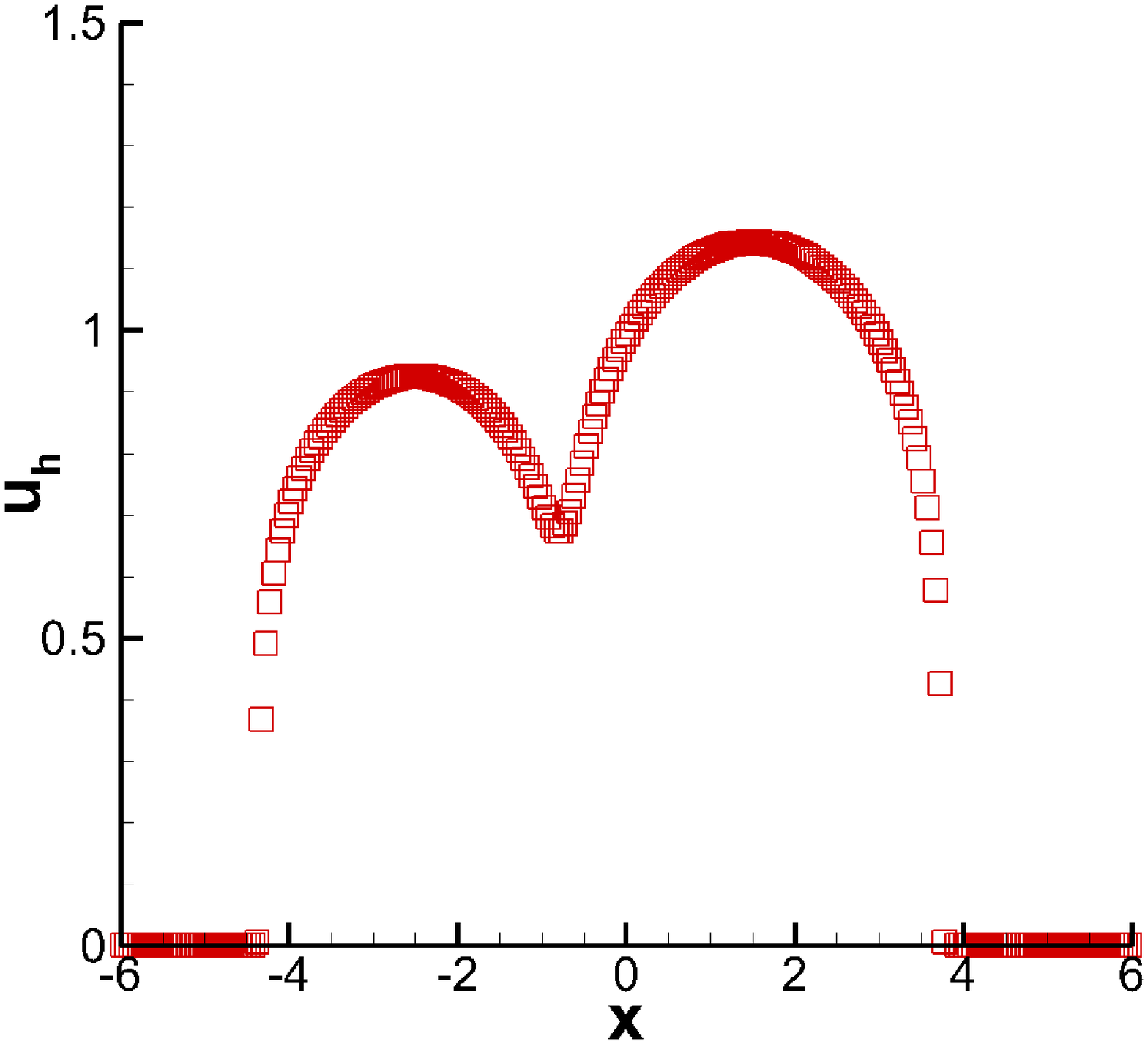} }
\subfigure[$t=0.20$]{
\includegraphics[width=0.45\textwidth,height=0.15\textheight]{./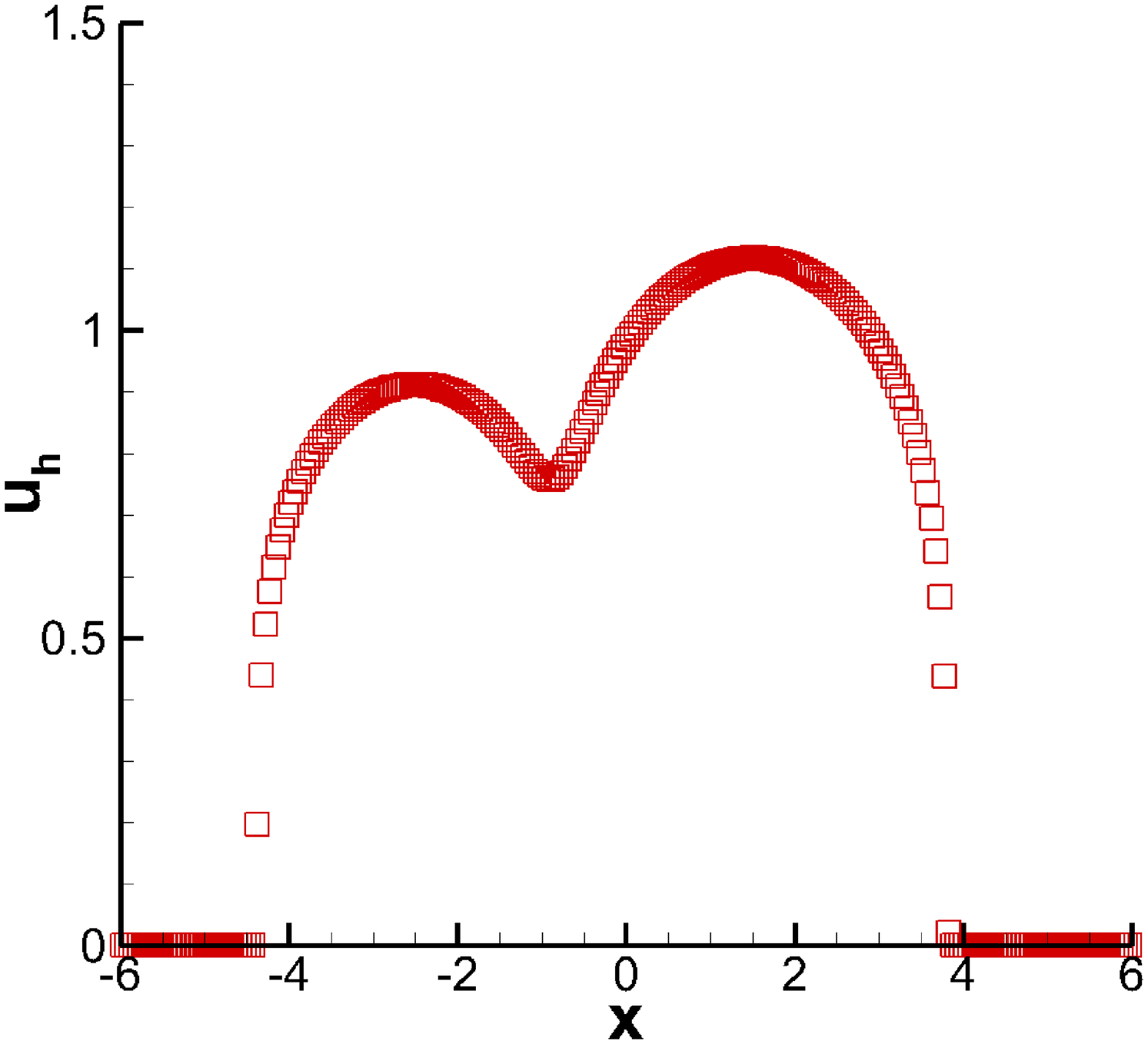} }
\subfigure[$t=0.23$]{
\includegraphics[width=0.45\textwidth,height=0.15\textheight]{./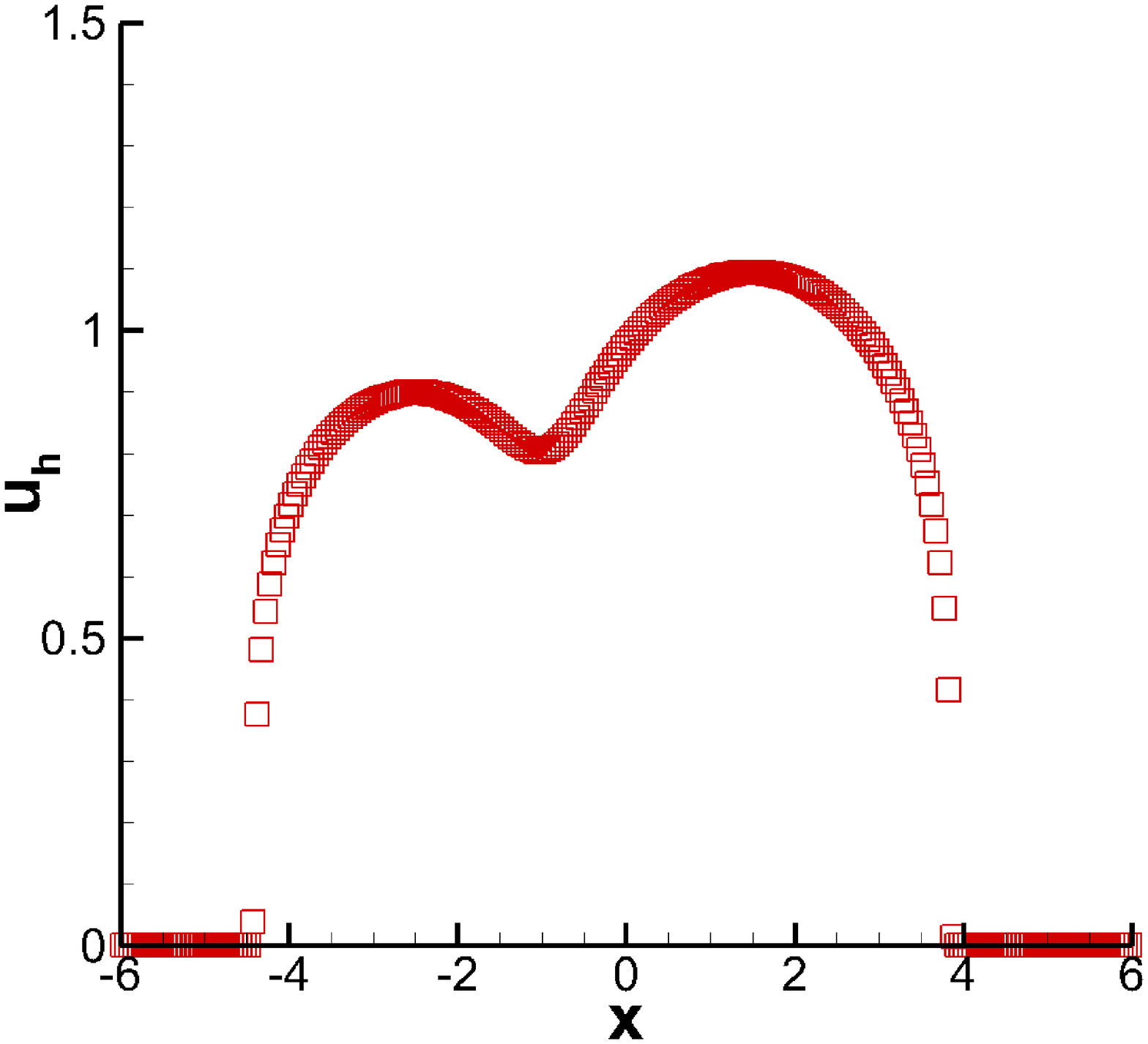} }
\subfigure[$t=0.50$]{
\includegraphics[width=0.45\textwidth,height=0.15\textheight]{./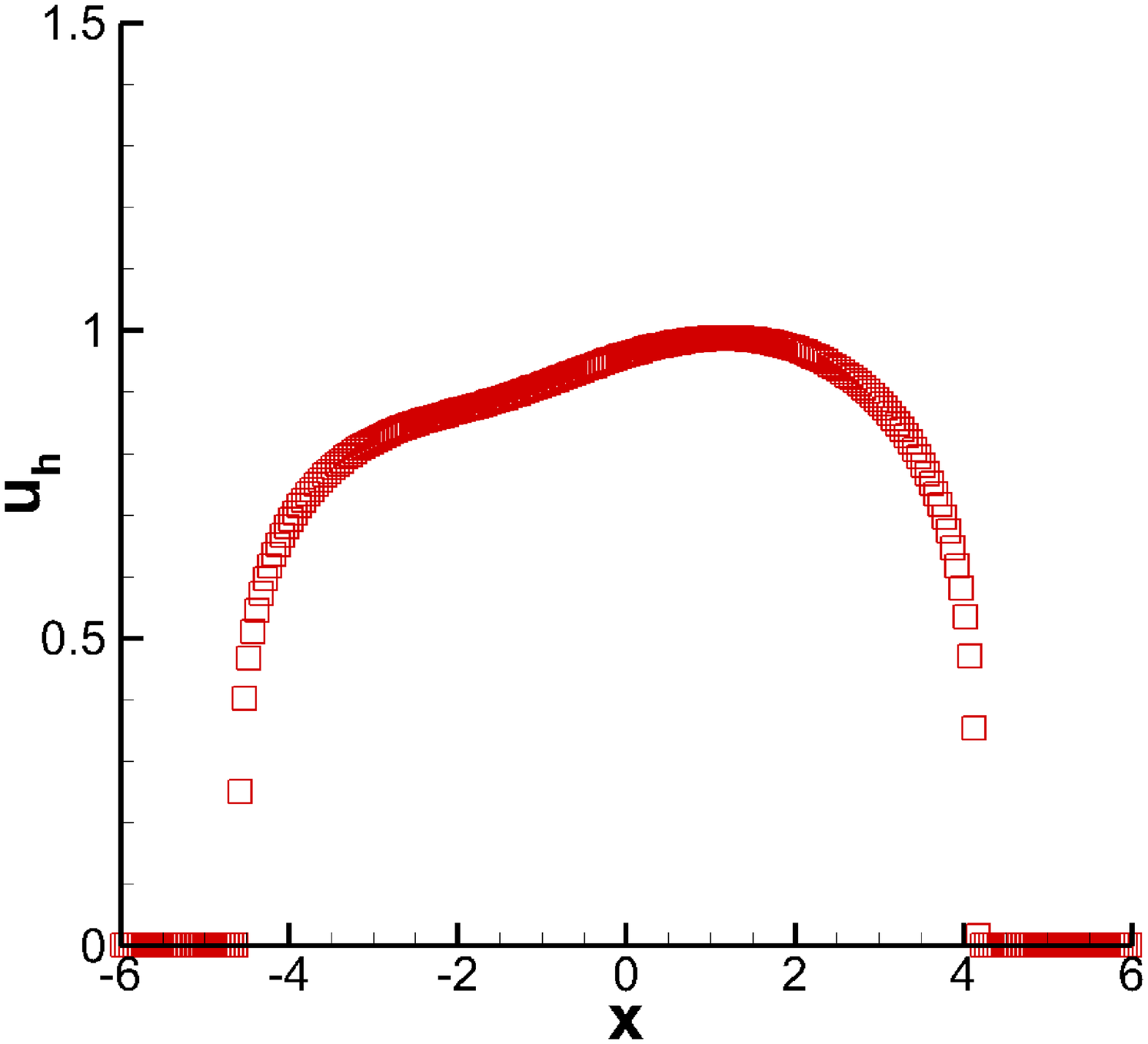} }
\subfigure[$t=1.00$]{
\includegraphics[width=0.45\textwidth,height=0.15\textheight]{./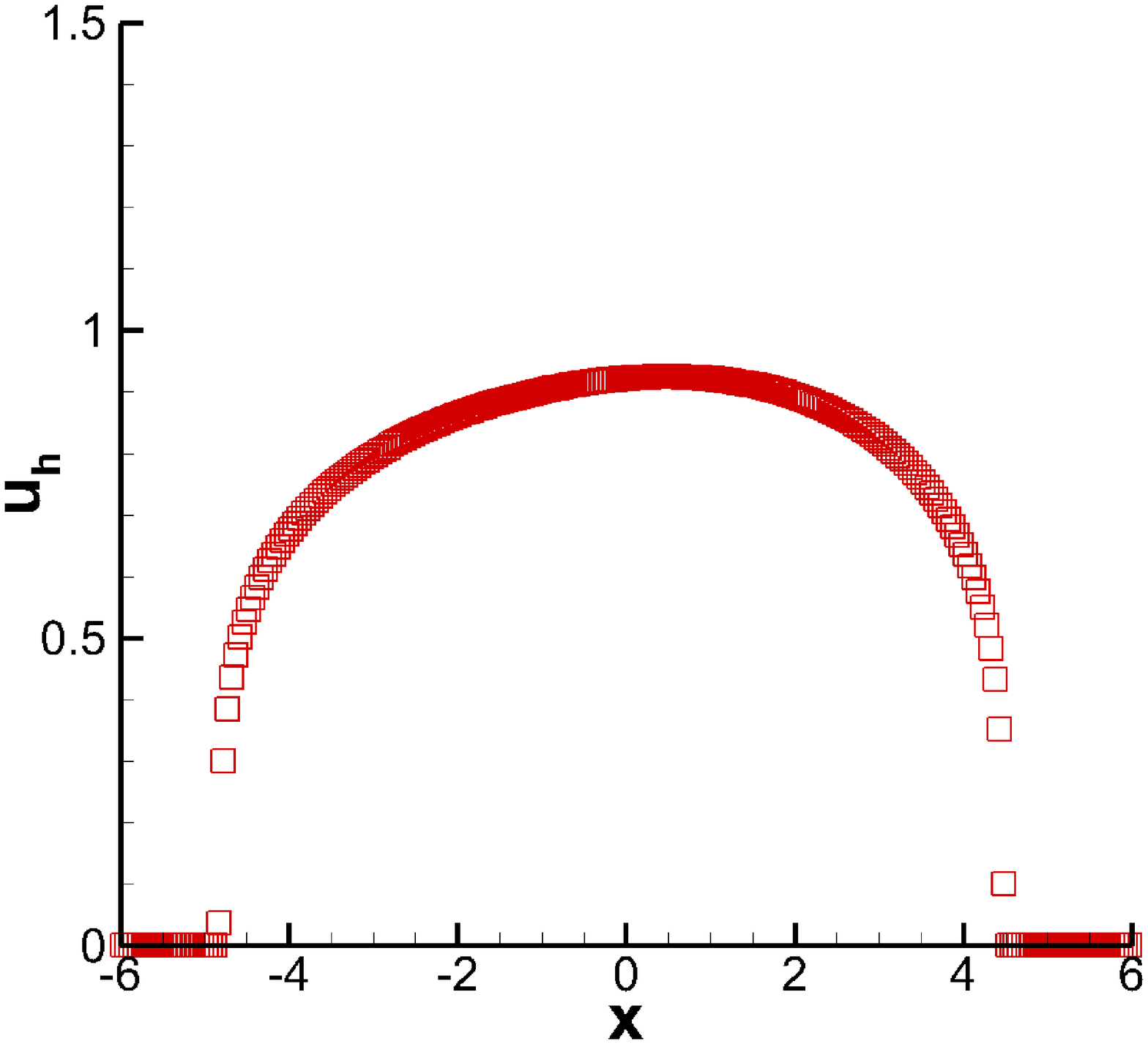} }
\end{figure}

\begin{exmp}\label{exmp3}
In this example, let us consider the Buckley-Leverett equation \cite{Buckley1942AIME}
\begin{align}
u_t+f(u)_x=\epsilon(\nu(u)u_x)_x, \quad x\in[0,1],
\end{align}
which is usually used to model two-phase flow in porous media in fluid dynamics, such as displacing oil by water in a one-dimensional or quasi-one-dimensional reservoir. We choose the parameter $\epsilon=0.01$ and
\begin{align}
\nu(u)=\left\{\begin{array}{ll}
4u(1-u), & 0 \leq u \leq 1,\\
0,&\text{otherwise}.
\end{array}
\right.
\end{align}
So, we can get
\begin{align}
&a(u)=\epsilon\nu(u),\\
&g(u)=\int^u\sqrt{a(u)}\, du =\left\{\begin{array}{ll}
0, & u<0,\\
\sqrt{\epsilon}(\frac{\theta}{2}-\frac{1}{8}\sin(4\theta)), & u=\sin^2(\theta),\quad 0\leq \theta \leq \frac{\pi}{2}.\\
\frac{\sqrt{\epsilon}\pi}{4},& u>1.
\end{array}\right.
\end{align}
We will consider two kinds of flux functions. One is no gravitational effects
\begin{align}\label{flux_no_gra}
f(u)=\frac{u^2}{u^2+(1-u)^2},
\end{align}
the other has gravitational effects
\begin{align}\label{flux_gra}
f(u)=\frac{u^2}{u^2+(1-u)^2}(1-5(1-u)^2).
\end{align}
\end{exmp}
We firstly consider the flux \eqref{flux_no_gra} and take the initial condition as
\begin{align}
u(x,0)=\left\{\begin{array}{ll}
1-3x,& 0\leq x \leq 1/3,\\
0, & 1/3 < x \leq 1.
\end{array}\right.
\end{align}
The boundary conditions $u(0,t)=1$ and $u(1,t)=0$ are imposed. Our terminal time is $t=0.2$. We test this example with different number of cells, the numerical results are shown in Figure \ref{fig_1d_BL_exmp1}.
It indicates the numerical solution converges to the entropy solution as the mesh refining.

\begin{figure}[htb]
\centering
\caption{ \label{fig_1d_BL_exmp1} Example \ref{exmp3}: Initial-boundary value problem for the Buckley-Leverett equation.}
\includegraphics[width=0.45\textwidth]{./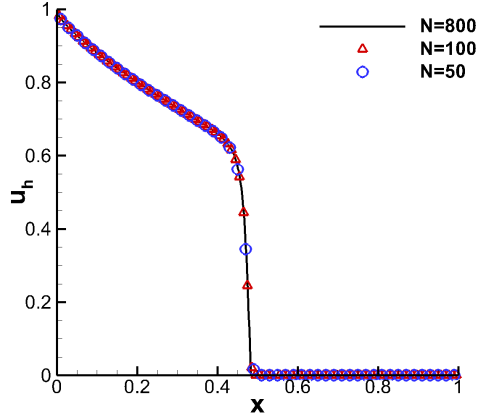}
\end{figure}

Secondly, we solve a Riemann problem with both fluxes \eqref{flux_no_gra} and \eqref{flux_gra}. The initial data is given as
\begin{align}
u(x,0)=\left\{\begin{array}{ll}
0,&0\leq x <1 - \frac{1}{\sqrt{2}},\\
1,& 1-\frac{1}{\sqrt{2}} \leq x \leq 1.
\end{array}\right.
\end{align}
The terminal time is $t=0.2$. The numerical results are shown in Figure \ref{fig_1d_BL_exmp2}. We can observe that the scheme can sharply capture the contacts without noticeable spurious oscillations, and the results are benchmarked against those in \cite{Jiang2021JSC}.
\begin{figure}[htb]
\centering
\caption{ \label{fig_1d_BL_exmp2} Example \ref{exmp3}: Riemann problems for the Bucklay-Leverett equation.}
\subfigure[without gravitation]{
\includegraphics[width=0.45\textwidth]{./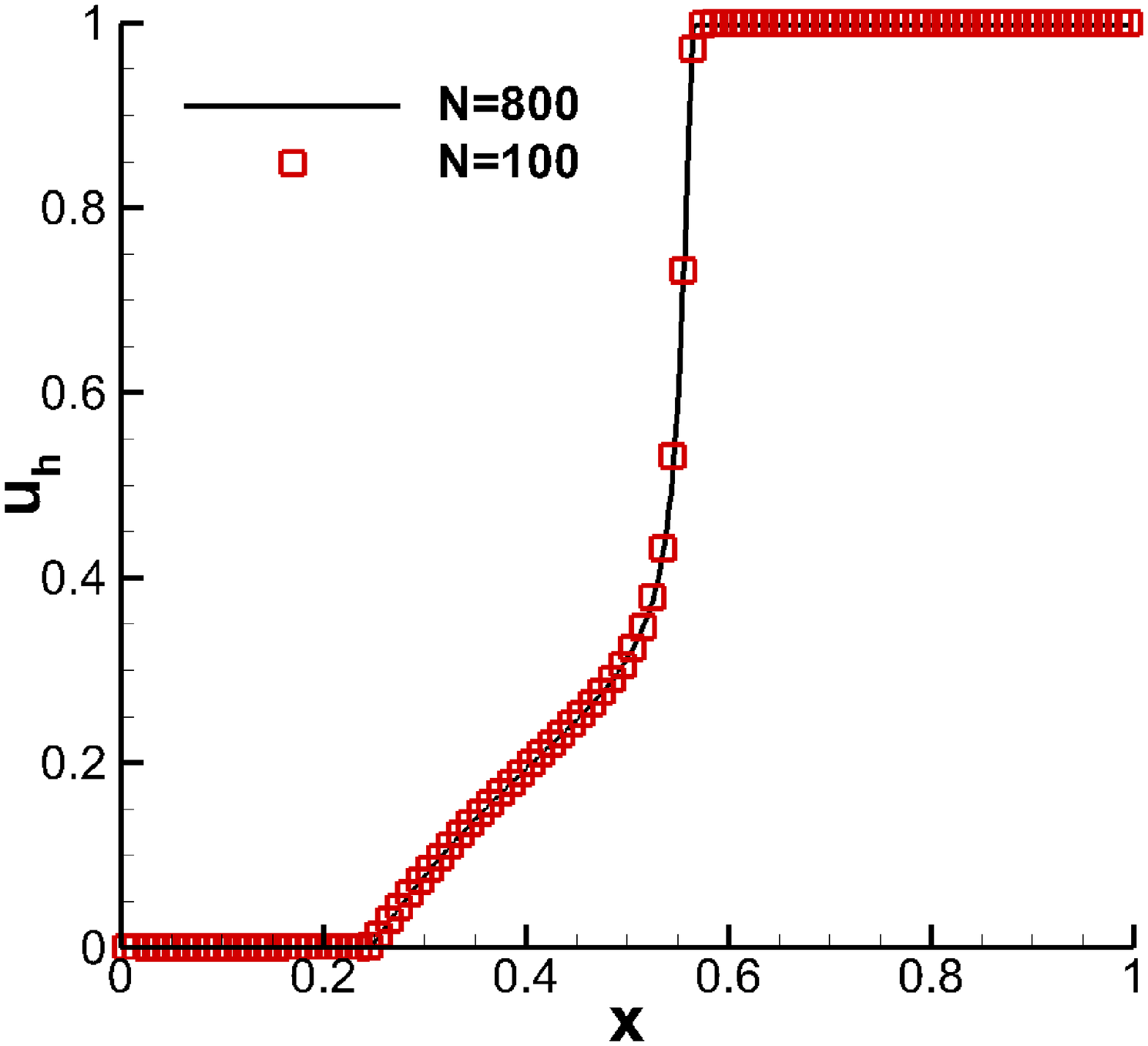} }
\subfigure[with gravitation]{
\includegraphics[width=0.45\textwidth]{./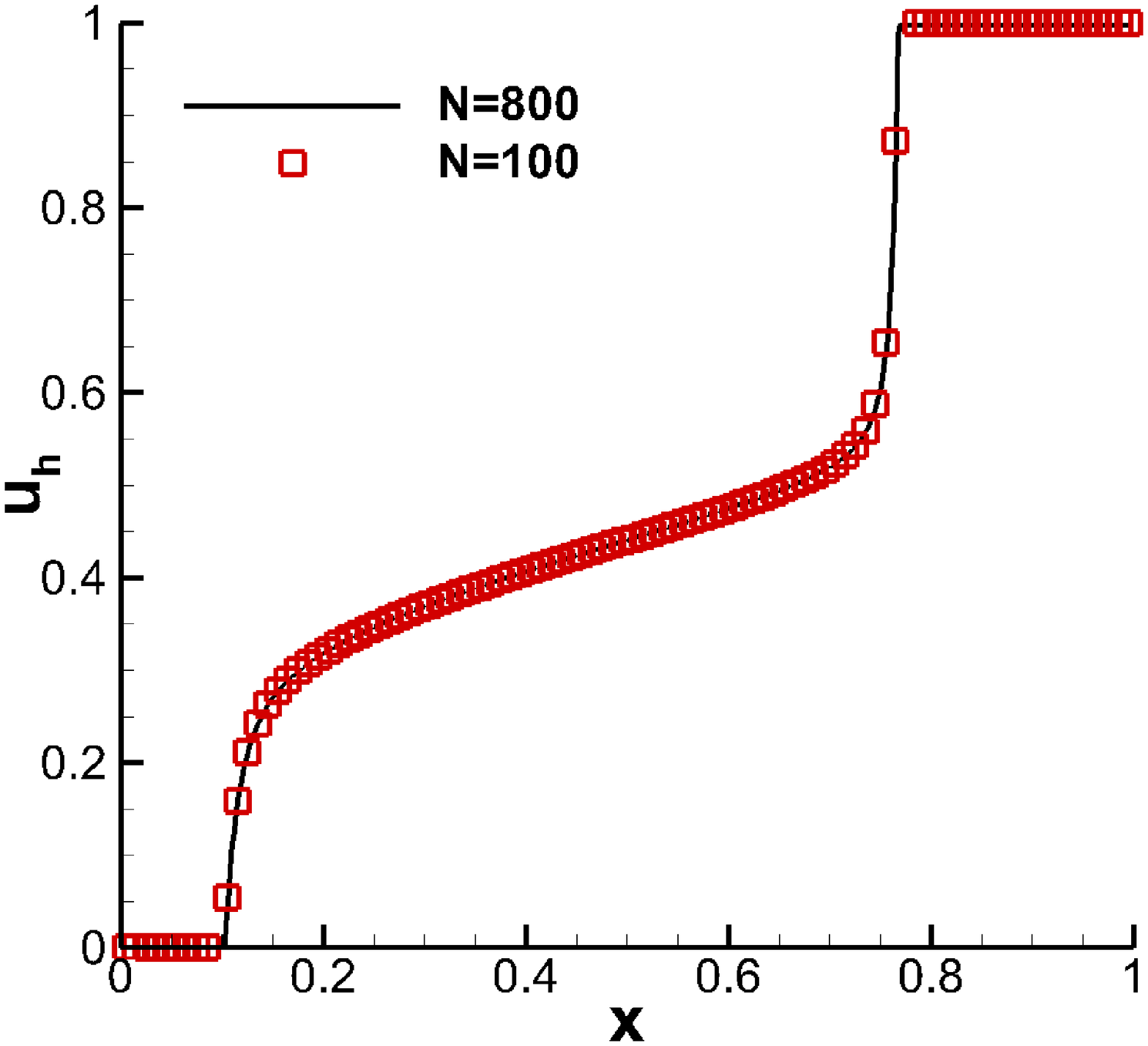} }
\end{figure}

\begin{exmp}\label{examp4}
Our final one-dimensional problem is a strongly degenerate parabolic convection-diffusion equation
\begin{align}
u_t+f(u)_x=\epsilon(\nu(u)u_x)_x,
\end{align}
with $\epsilon=0.1$, $f(u)=u^2$, and
\begin{align}
\nu(u)=\left\{\begin{array}{ll}
0,&|u|\leq 0.25,\\
1,&|u| >0.25.
\end{array}\right.
\end{align}
This $\nu$ will lead to the equation has hyperbolic property when $u\in [-0.25,0.25]$ and becomes parabolic elsewhere. We have,
\begin{align}
a(u)=\epsilon\nu(u),\quad g(u)=\int^u\sqrt{a(u)}\, du =\left\{\begin{array}{ll}
\sqrt{\epsilon}(u+0.25), & u<-0.25,\\
\sqrt{\epsilon}(u-0.25), & u>0.25,\\
0,& |u|\leq 0.25.
\end{array}\right.
\end{align}
We consider the following initial condition
\begin{align}
u(x,0)=\left\{\begin{array}{ll}
1,&-\frac{1}{\sqrt{2}}-0.4 < x < -\frac{1}{\sqrt{2}}+0.4,\\
-1,& \frac{1}{\sqrt{2}}-0.4 < x < \frac{1}{\sqrt{2}}+0.4,\\
0,&\text{otherwise},
\end{array}\right.
\end{align}
and a zero boundary condition $u(\pm 2, t)=0$, the final time is $t=0.7$.
\end{exmp}
In this example, we make a comparison. We solve this problem by using the original LDG scheme and our OFLDG scheme, the numerical results are provided in Figure \ref{fig_1d_strong_DP}. It can be clearly seen that
the spurious oscillations do appear in the numerical results without the damping terms, i.e. the original LDG scheme. However, the OFLDG scheme effectively controls the spurious oscillations and accurately captures the sharp interface. This indicates that the damping terms do have the ability of reducing the spurious oscillations of numerical solutions.
\begin{figure}[htb]
\centering
\caption{ \label{fig_1d_strong_DP} Example \ref{examp4}: Riemann problem for the strongly degenerate parabolic equation.}
\subfigure[without damping terms]{
\includegraphics[width=0.45\textwidth]{./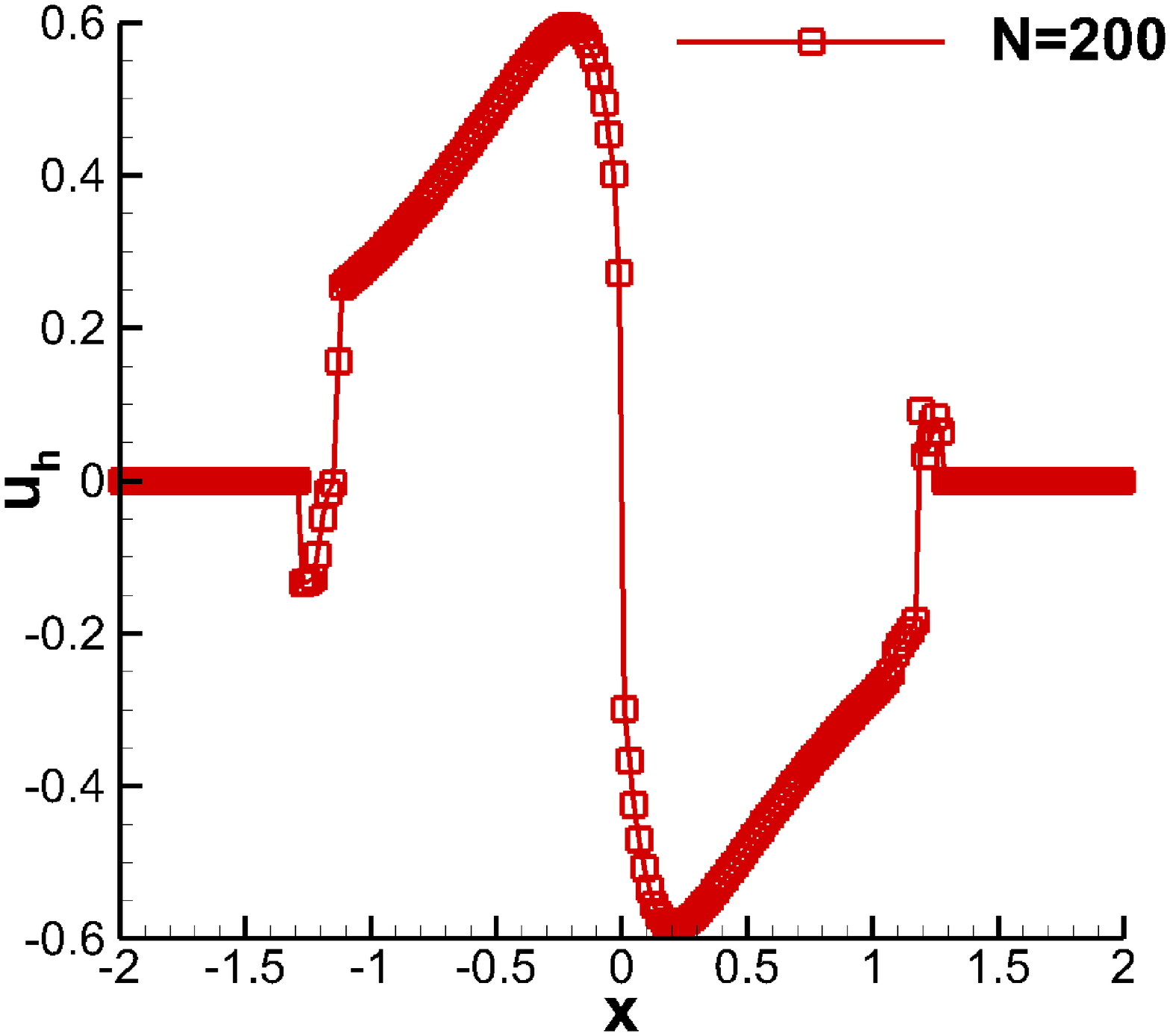}}
\subfigure[with damping terms]{
\includegraphics[width=0.45\textwidth]{./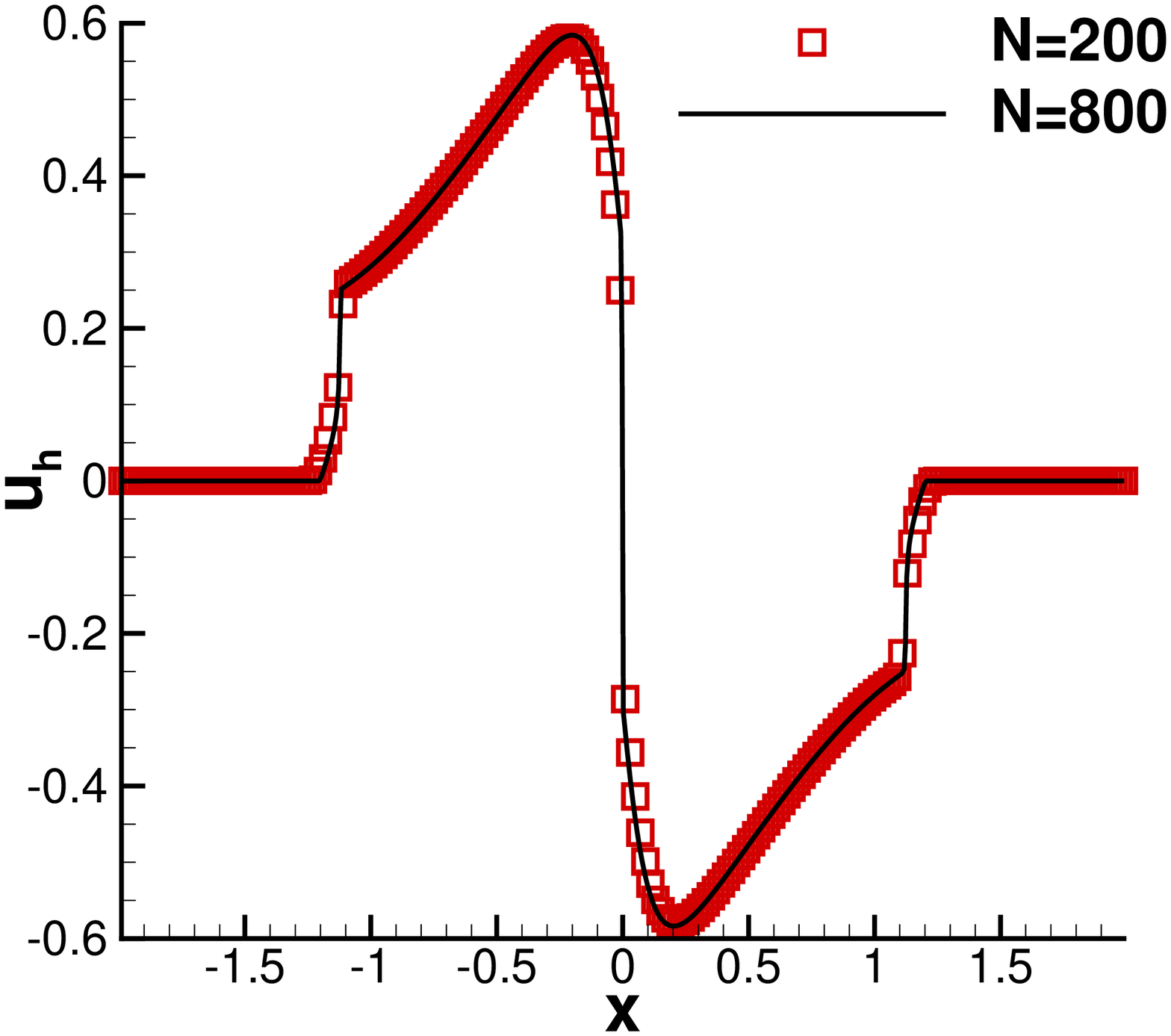}}
\end{figure}

\begin{exmp}\label{examp7}
Our first two-dimensional example is to test accuracy of the OFLDG method. Let's consider the heat equation
\begin{align}
\left\{\begin{array}{ll}
u_t=u_{xx}+u_{yy}, & (x,y) \in [-\pi,\pi]^2,\\
u(x,y,0)=\sin(x+y),&
\end{array}\right.
\end{align}
with $2\pi$-periodic boundary conditions in both directions. The exact solution of this problem is $u(x,y,t)=e^{-2t}\sin(x+y)$.
\end{exmp}
\begin{table}[htb]\small
\caption{\label{tab2} Example \ref{examp7}: Errors and orders of $u_h$ of the heat equation.} \centering
\medskip
\begin{tabular}{|c|r||cc|cc|cc|}  \hline
 & $N_x\times N_y$ & $L^1$ error & order & $L^2$ error & order & $L^\infty$ error & order \\ \hline \hline
 \multirow{4}{0.6cm}{$\mathcal{P}^1$ }
      &   $10\times10$    &  1.292E-03    &   --    &  1.495E-03    &   --   &  2.538E-03    &   --   \\
    &   $20\times20$    &  1.972E-04    &   2.711    &  2.367E-04    &   2.658    &  5.968E-04    &   2.089   \\
    &   $40\times40$    &  3.525E-05    &   2.484    &  4.419E-05    &   2.422    &  1.841E-04    &   1.697   \\
    &   $80\times80$    &  7.537E-06    &   2.225    &  9.953E-06    &   2.150    &  5.010E-05    &   1.877   \\
\hline
 \multirow{4}{0.6cm}{$\mathcal{P}^2$}
     &   $10\times10$   &  1.971E-04    &  --    &  2.188E-04    &  --   &  4.428E-04    &  --  \\
    &   $20\times20$    &  1.456E-05    &   3.759    &  1.655E-05    &   3.725    &  6.096E-05    &   2.861   \\
    &   $40\times40$    &  1.158E-06    &   3.653    &  1.411E-06    &   3.552    &  7.766E-06    &   2.973   \\
    &   $80\times80$    &  1.102E-07    &   3.392    &  1.475E-07    &   3.258    &  9.749E-07    &   2.994   \\
\hline
 \multirow{4}{0.6cm}{$\mathcal{P}^3$}
        &   $10\times10$    &  1.107E-05    &  --    &  1.344E-05    &  --    &  6.057E-05    &  --   \\
    &   $20\times20$    &  3.284E-07    &   5.075    &  4.798E-07    &   4.807    &  3.300E-06    &   4.198   \\
    &   $40\times40$    &  1.540E-08    &   4.415    &  2.403E-08    &   4.320    &  1.929E-07    &   4.096   \\
    &   $80\times80$    &  9.247E-10    &   4.058    &  1.411E-09    &   4.090    &  1.165E-08    &   4.050   \\
 \hline
\end{tabular}
\end{table}

The errors and the associated orders of $u_h$ at time $t=2$ are provided in Table \ref{tab2}. From Table \ref{tab2}, we can observe that the numerical solutions still have the optimal convergence order
when using the piecewise $\mathcal{P}^k$ finite element space for the rectangular meshes, although our theoretical results are based on piecewise $\mathcal{Q}^k$ finite element space.
It indicates that the damping terms does not reduce the accuracy of the original LDG scheme.
\begin{figure}[htb]
\centering
\caption{ \label{fig_2d_PME} Example \ref{exmap5}: The 2D PME. $N_x\times N_y=80\times 80$.}
\subfigure[$t=0$]{
\includegraphics[width=0.45\textwidth]{./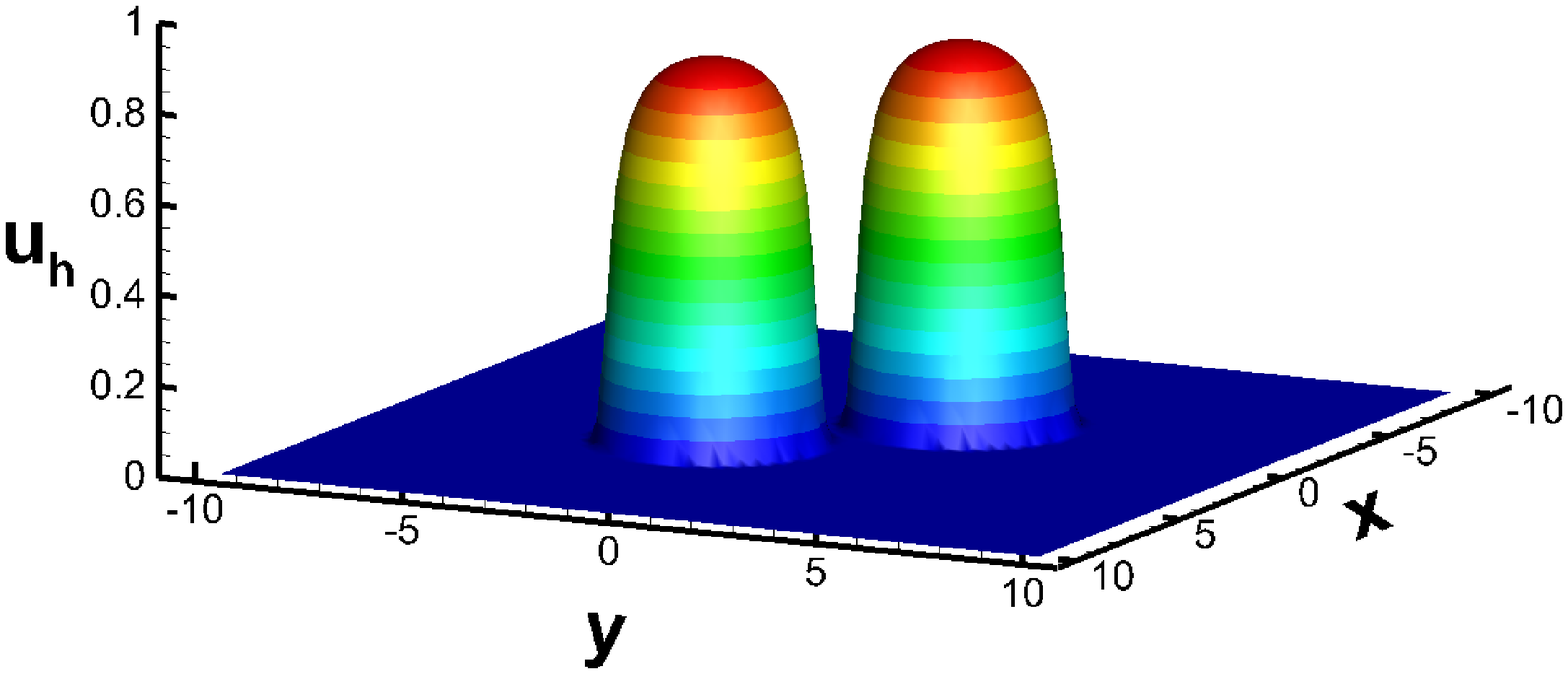} }
\subfigure[$t=0.5$]{
\includegraphics[width=0.45\textwidth]{./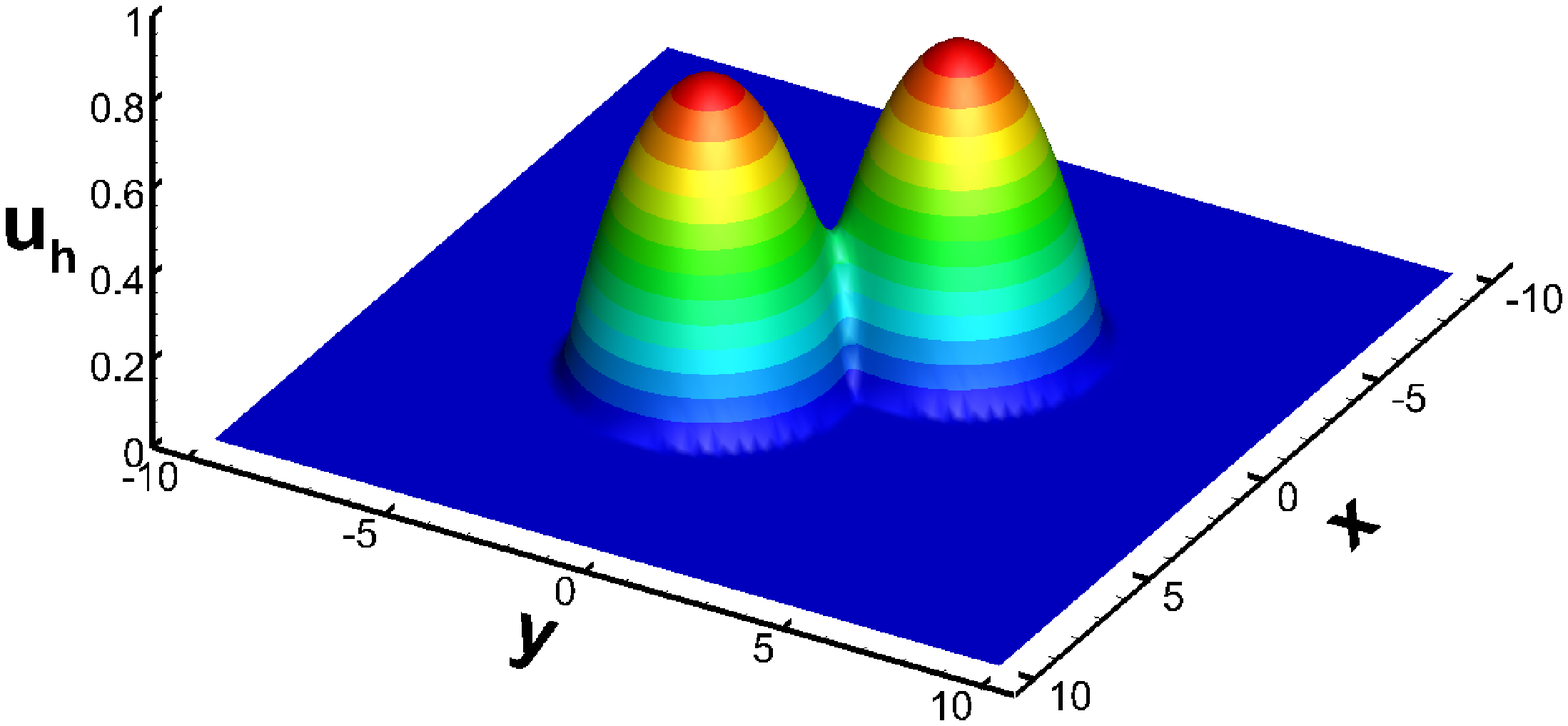} }
\subfigure[$t=1.$]{
\includegraphics[width=0.45\textwidth]{./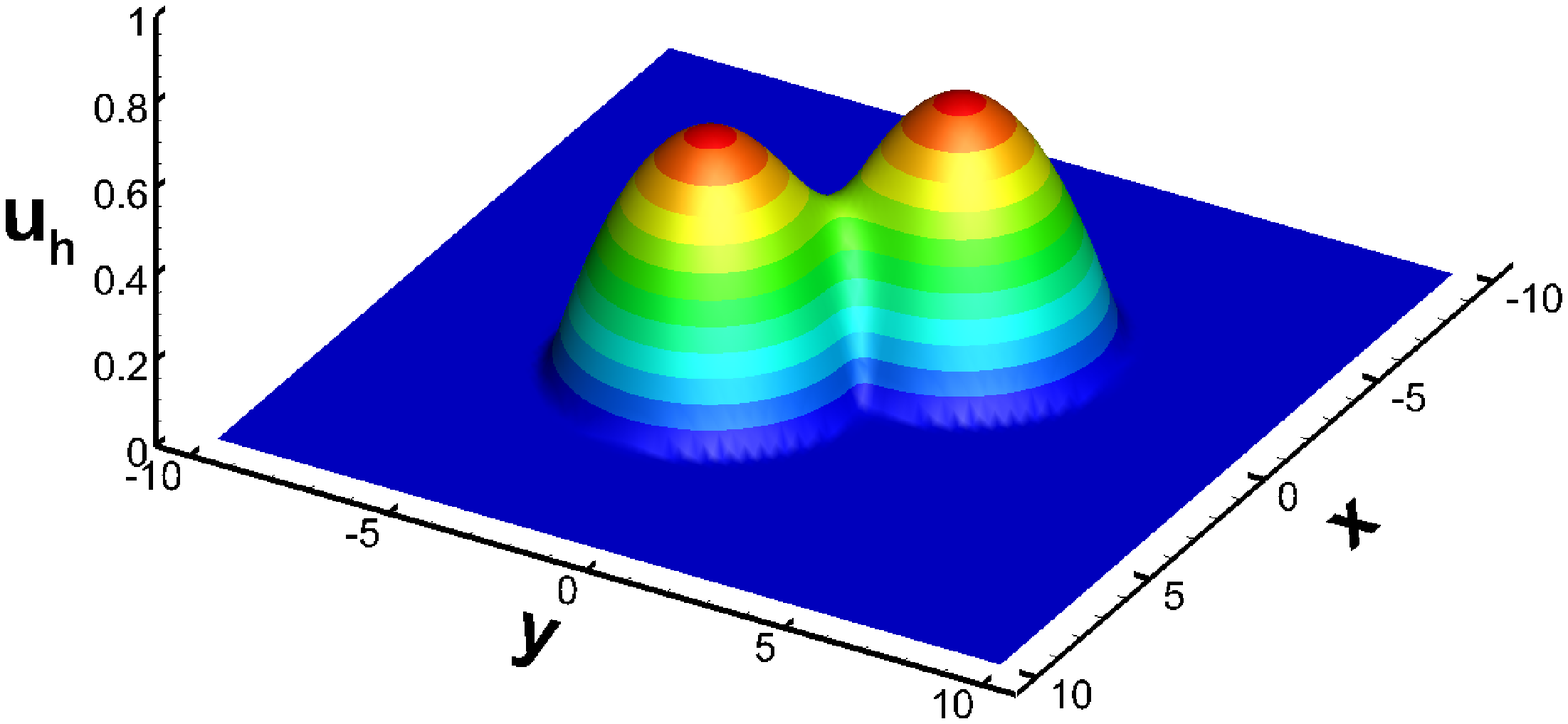} }
\subfigure[$t=4.$]{
\includegraphics[width=0.45\textwidth]{./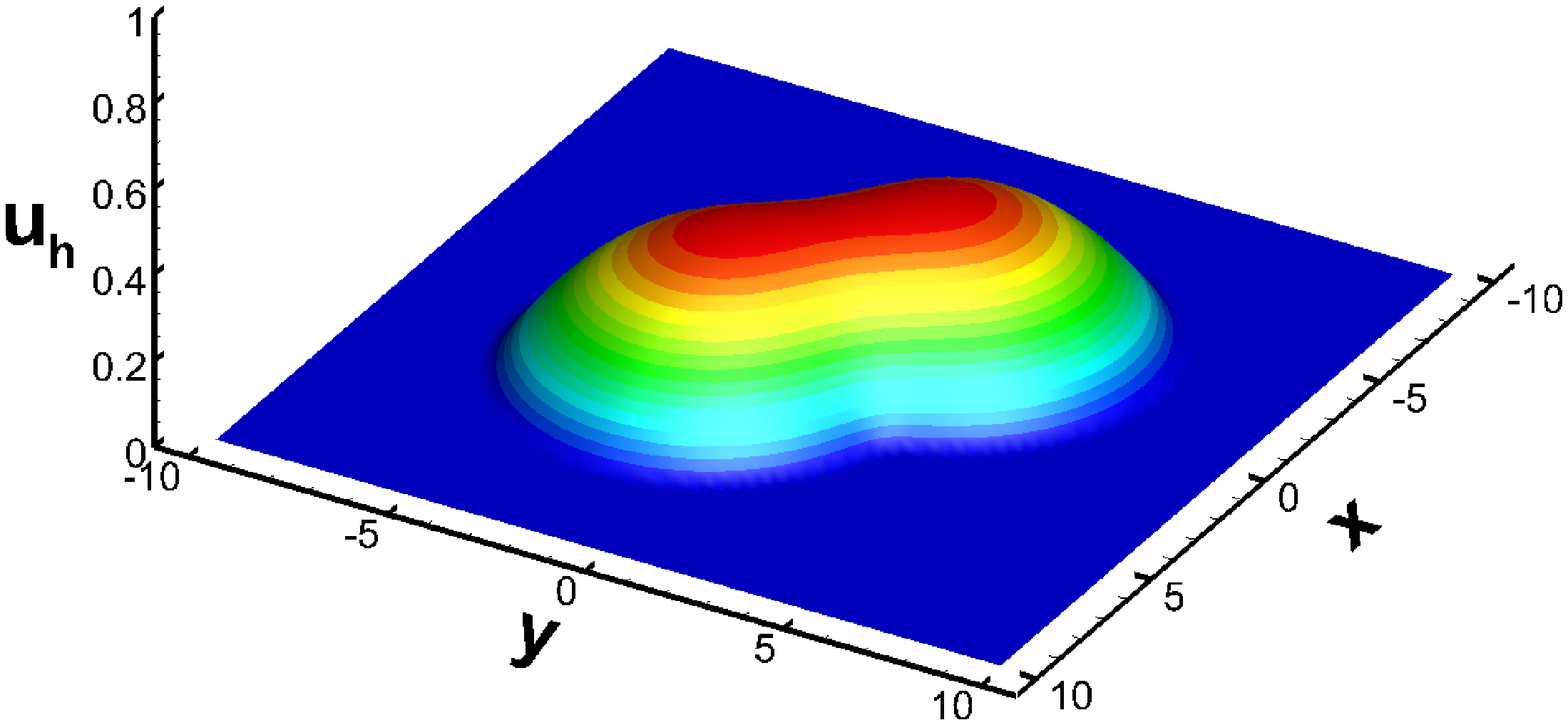} }
\end{figure}

\begin{figure}[htb]
\centering
\caption{ \label{fig_2d_SDP} Example \ref{examp6}: The 2D strongly parabolic equation.}
\subfigure[Contour]{
\includegraphics[width=0.45\textwidth]{./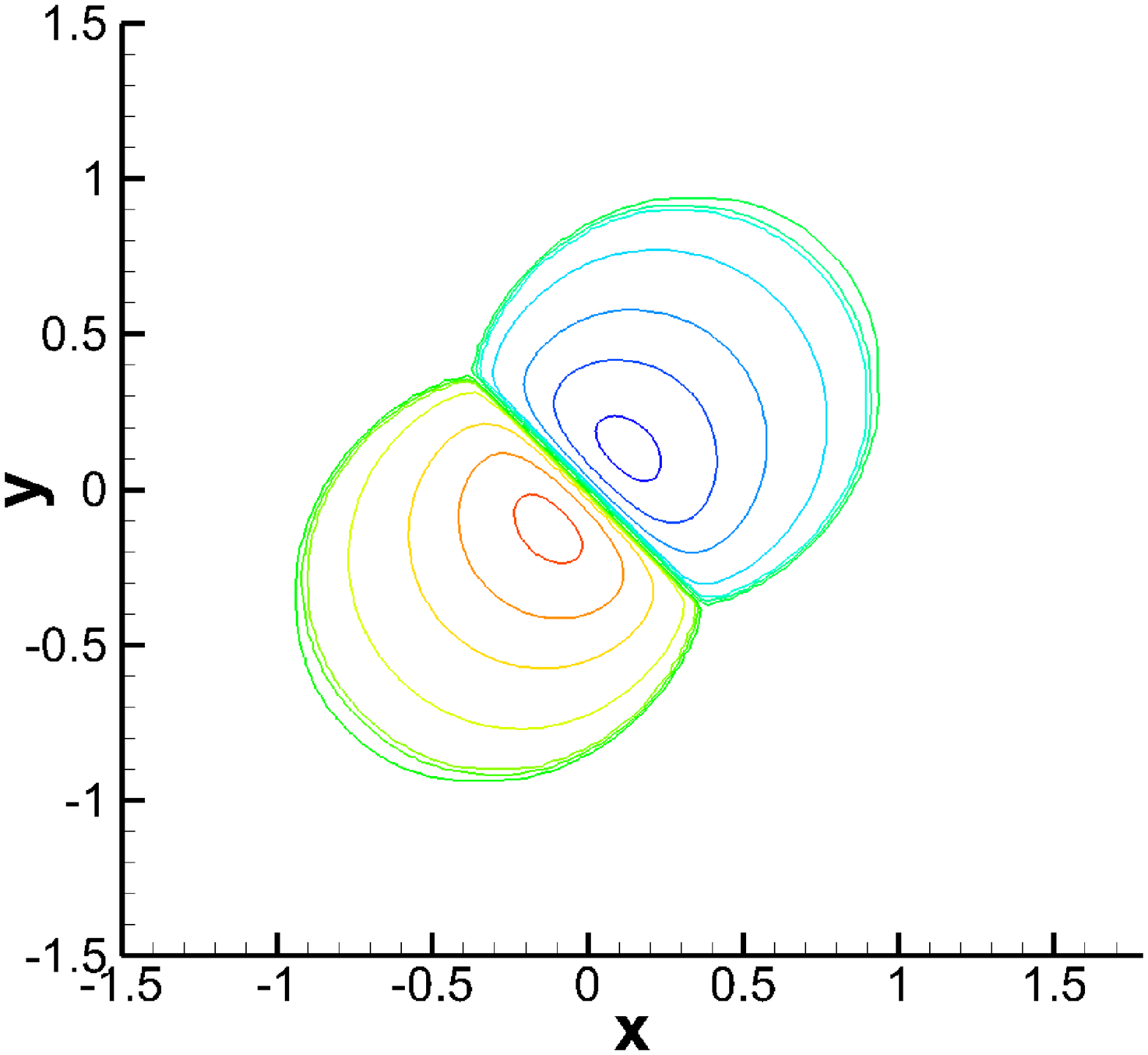} }
\subfigure[Surface]{
\includegraphics[width=0.45\textwidth]{./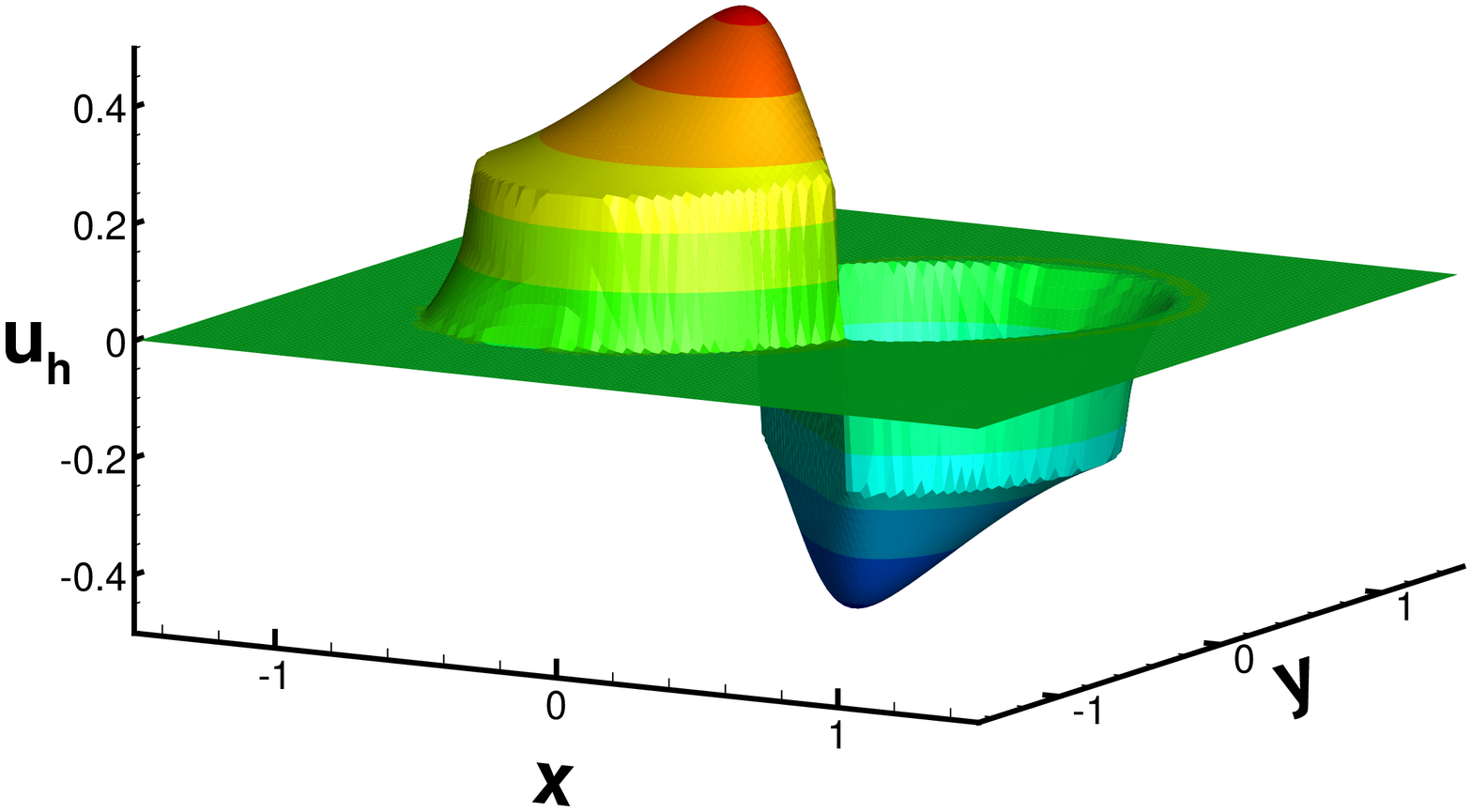} }
\end{figure}

\begin{exmp}\label{exmap5}
Next, we consider the two-dimensional PME
\begin{align}
u_t=(u^2)_{xx}+(u^2)_{yy},
\end{align}
in domain $[-10,10]\times[-10,10]$, with the initial condition
\begin{align}
u(x,y,0)=\left\{\begin{array}{ll}
\exp\left(\frac{-1}{6-(x-2)^2-(y+2)^2}\right),& (x-2)^2+(y+2)^2 <6,\\ \,
\exp\left(\frac{-1}{6-(x+2)^2-(y-2)^2}\right),& (x+2)^2+(y-2)^2 <6,\\
0, & \text{otherwise},
\end{array}\right.
\end{align}
and periodic boundary conditions in each directions.
\end{exmp}
The numerical solutions with $80\times 80$ uniform mesh at time $t=0,0.5,1.0$ and $4.0$ are shown in Figure \ref{fig_2d_PME}. The OFLDG scheme can capture the sharp interface without apparent oscillation.

\begin{exmp}\label{examp6}
Our final example is solving a two-dimensional strongly degenerate parabolic equation
\begin{align}
u_t+f(u)_x+f(u)_y = \epsilon (\nu(u)u_x)_x+\epsilon (\nu(u)u_y)_y,
\end{align}
on domain $[-1.5,1.5]\times[-1.5,1.5]$, where $f(u), \nu(u)$ and $\epsilon$ are the same as
the one-dimensional case in Example \ref{examp4}. The initial function is given as
\begin{align}
u(x,y,0)= \left\{\begin{array}{ll}
1,& (x+0.5)^2+(y+0.5)^2 <0.16,\\
-1,& (x-0.5)^2+(y-0.5)^2 <0.16,\\
0,& \text{otherwise.}
\end{array}\right.
\end{align}
\end{exmp}
The solution at $t=0.5$ computed by the OFLDG scheme with $120\times 120$ mesh cells is shown in Figure \ref{fig_2d_SDP}, which agrees well with the results in \cite{Jiang2021JSC,Liu2011SISC}.

\section{Concluding remarks}
\label{sec_sum}

In this paper, we propose a novel oscillation free local discontinuous Galerkin (OFLDG) method
to solve the nonlinear degenerate parabolic equations. This work is an extension of our recent work \cite{LLS2021SINUM}.
The key idea of the OFLDG method is to add some damping to the high order coefficients ($k \geq 1$).
The added damping terms not only preserve the high-order accuracy in smooth regions,
but also control the spurious oscillation well when the solution is of low regularity.
The $L^2$-stability and the optimal error estimates of semi-discrete schemes are rigorously established
for both one- and multidimensional nonlinear problems.
Several numerical examples are shown to demonstrate the effectiveness and robustness of the proposed scheme. Our next work is to extend the current framework to systems such as Navier-Stokes equations.

\end{document}